\documentclass[11pt]{amsart}

\usepackage{latexsym}
\usepackage{amssymb}
\usepackage{mathrsfs}
\usepackage{amsmath}
\usepackage{fancybox,color}
\usepackage{enumerate}

\addtolength{\parskip}{0.05cm}

 \def\1{\raisebox{2pt}{\rm{$\chi$}}}

\newtheorem{theorem}{Theorem}[section]
\newtheorem{corollary}[theorem]{Corollary}
\newtheorem{lemma}[theorem]{Lemma}
\newtheorem{proposition}[theorem]{Proposition}
\newtheorem{definition}[theorem]{Definition}
\newtheorem{remark}[theorem]{Remark}
\newtheorem{example}[theorem]{Example}

\newcommand{\R}{{\mathbb R}}
\newcommand{\RR}{{\mathbb R}}

 \newcommand{\eps}{{\varepsilon}}
 \def\1{\raisebox{2pt}{\rm{$\chi$}}}
 
\newcommand{\Lip}{\operatorname{Lip}}
\newcommand{\abs}[1]{\left|#1\right|}
\newcommand{\norm}[1]{\left|\left|#1\right|\right|}
\newcommand{\Rn}{\mathbb{R}^n}

%
%
%
%
\def\vint_#1{\mathchoice%
          {\mathop{\kern 0.2em\vrule width 0.6em height 0.69678ex depth -0.58065ex
                  \kern -0.8em \intop}\nolimits_{\kern -0.4em#1}}%
          {\mathop{\kern 0.1em\vrule width 0.5em height 0.69678ex depth -0.60387ex
                  \kern -0.6em \intop}\nolimits_{#1}}%
          {\mathop{\kern 0.1em\vrule width 0.5em height 0.69678ex
              depth -0.60387ex
                  \kern -0.6em \intop}\nolimits_{#1}}%
          {\mathop{\kern 0.1em\vrule width 0.5em height 0.69678ex depth -0.60387ex
                  \kern -0.6em \intop}\nolimits_{#1}}}
\def\vintslides_#1{\mathchoice%
          {\mathop{\kern 0.1em\vrule width 0.5em height 0.697ex depth -0.581ex
                  \kern -0.6em \intop}\nolimits_{\kern -0.4em#1}}%
          {\mathop{\kern 0.1em\vrule width 0.3em height 0.697ex depth -0.604ex
                  \kern -0.4em \intop}\nolimits_{#1}}%
          {\mathop{\kern 0.1em\vrule width 0.3em height 0.697ex depth -0.604ex
                  \kern -0.4em \intop}\nolimits_{#1}}%
          {\mathop{\kern 0.1em\vrule width 0.3em height 0.697ex depth -0.604ex
                  \kern -0.4em \intop}\nolimits_{#1}}}

\newcommand{\aveint}[2]{\mathchoice%
          {\mathop{\kern 0.2em\vrule width 0.6em height 0.69678ex depth -0.58065ex
                  \kern -0.8em \intop}\nolimits_{\kern -0.45em#1}^{#2}}%
          {\mathop{\kern 0.1em\vrule width 0.5em height 0.69678ex depth -0.60387ex
                  \kern -0.6em \intop}\nolimits_{#1}^{#2}}%
          {\mathop{\kern 0.1em\vrule width 0.5em height 0.69678ex depth -0.60387ex
                  \kern -0.6em \intop}\nolimits_{#1}^{#2}}%
          {\mathop{\kern 0.1em\vrule width 0.5em height 0.69678ex depth -0.60387ex
                  \kern -0.6em \intop}\nolimits_{#1}^{#2}}}

\newcommand{\ud}{\, d}
\newcommand{\half}{{\frac{1}{2}}}

\newcommand{\ol}{\overline}
\newcommand{\Om}{\Omega}
\newcommand{\I}{\textrm{I}}
\newcommand{\II}{\textrm{II}}
\newcommand{\dist}{\operatorname{dist}}
\newcommand{\diver}{\operatorname{div}}
\newcommand{\inter}{\operatorname{int}}

\newcommand{\supp}{\operatorname{supp}}
\newcommand{\essliminf}{\operatornamewithlimits{ess\,liminf}}
\newcommand{\esslimsup}{\operatornamewithlimits{ess\,limsup}}
\newcommand{\lip}{{\rm Lip}}

\newcommand{\trm}{\textrm}
\newcommand{\vp}{\varphi}

\begin{document}

\title[Gradient constraints and the infinity
Laplacian]{\bf Discontinuous gradient constraints and the infinity
Laplacian}


\author[P. Juutinen, M. Parviainen and J. D. Rossi]
{Petri Juutinen, Mikko Parviainen and Julio D. Rossi}
\address{Petri Juutinen
\hfill\break\indent
Department of Mathematics and Statistics
\hfill\break\indent
University of Jyv\"askyl\"a
\hfill\break\indent P.O. Box 35, 40014 University of Jyv\"askyl\"a,
\hfill\break\indent Finland
\hfill\break\indent
{\tt petri.juutinen@jyu.fi}}

\address{Mikko Parviainen
\hfill\break\indent
Department of Mathematics and Statistics
\hfill\break\indent
University of Jyv\"askyl\"a
\hfill\break\indent P.O. Box 35, 40014 University of Jyv\"askyl\"a
\hfill\break\indent Finland
\hfill\break\indent
{\tt mikko.j.parviainen@jyu.fi}}

\address{Julio D. Rossi
\hfill\break\indent Departamento de An\'{a}lisis Matem\'{a}tico,
\hfill\break\indent  Universidad de Alicante,
\hfill\break\indent Ap. correos 99, 03080, Alicante, \hfill\break\indent
Spain.
\hfill\break\indent
On leave from Dpto. de Matem\'{a}ticas, FCEyN
\hfill\break\indent Universidad de Buenos Aires, 1428
\hfill\break\indent  Buenos
Aires, Argentina.
\hfill\break\indent {\tt jrossi@dm.uba.ar} }

\date{\today}

\keywords{infinity Laplacian, tug of war games, gradient constraint} \subjclass[2000]{35J92, 35D40, 35Q91, 91A15}

\begin{abstract}
Motivated by tug-of-war games and asymptotic analysis of certain variational problems,
we consider the following gradient constraint problem: given a
bounded domain $\Om\subset\Rn$, a continuous function
$f\colon\partial\Om\to\R$ and a non-empty subset $D\subset\Om$,
find a solution to
\[
\begin{cases}
 \min\{\Delta_\infty u, \abs{Du}-\chi_D\}=0&\text{in $\Om$}\\
u=f&\text{on $\partial\Om$},
\end{cases}
\]
where $\Delta_\infty$ is the infinity Laplace operator.
We prove that this problem always has a solution that is unique
if $\overline D=\overline{\inter D}$. If this regularity condition
on $D$ fails, then solutions obtained from game theory and
$L^p$-approximation need not coincide.
\end{abstract}

\maketitle

\section{Introduction}

The infinity Laplacian, introduced by
Aronsson
\cite{A} in 1960's, is a second order quasilinear partial
differential operator formally defined as
\[
\Delta_\infty u(x)=D^2u(x)Du(x)\cdot Du(x)=\sum_{i,j=1}^n u_{ij}(x)u_i(x)u_j(x).
\]
It is the ``Laplacian of $L^\infty$-variational problems'': the
equation $\Delta_\infty u(x)=0$ is the Euler-Lagrange equation for
the variational problem of finding absolute minimizers for the
prototypical $L^\infty$-functional
\[
I(u)=\norm{Du}_{L^\infty(\Om)}
\]
with given boundary values, see e.g.\ \cite{J}. The infinity
Laplacian also arises from certain random turn games \cite{PSSW}, \cite{BEJ} and
mass transportation problems \cite{gmpr}, and it appears in several
applications, such as image reconstruction and enhancement
\cite{cms}, and the study of shape metamorphism \cite{cepb}.

In this paper, we are interested in the following gradient
constraint problem involving the infinity Laplacian: given a
bounded domain $\Om\subset\Rn$, a continuous function
$f\colon\partial\Om\to\R$ and a non-empty subset $D\subset\Om$,
find a viscosity solution to
\begin{equation}\label{bvp:grad_constraint.intro}
\begin{cases}
 \min\{\Delta_\infty u(x), \abs{Du(x)}-\chi_D(x)\}=0&\text{in $\Om$}\\
u(x)=f(x)&\text{on $\partial\Om$}.
\end{cases}
\end{equation}
Here $\chi_D\colon\Om\to\R$ denotes the characteristic function of
the set $D$, that is,
$$
\chi_D(x)= \left\{\begin{array}{ll} 1, &
\text{ if $x\in
D$,} \\
0, & \text{if $x\in\Om\setminus D$.}
\end{array} \right.
$$

The study of gradient constraint problems of the type
\begin{equation}\label{eq:gen_grad_constraint}
  \min\{\Delta_\infty u(x), \abs{Du(x)}-g(x) \}=0,
\end{equation}
where $g\ge 0$, was initiated by Jensen in his celebrated paper
\cite{J}. He used the solutions of the equation
$\min\{\Delta_\infty u, \abs{Du}-\eps\}=0$ and its pair
$\max\{\Delta_\infty u, \eps-\abs{Du}\}=0$ to approximate the
solutions of the infinity Laplace equation $\Delta_\infty u=0$.
In this way, he proved uniqueness for the infinity Laplace equation by first showing  that it
holds for the approximating equations. The same approach was used
in the anisotropic case by Lindqvist and Lukkari in \cite{ll}, and
a variant of \eqref{eq:gen_grad_constraint} appears in the so
called $\infty$-eigenvalue problem, see e.g \cite{jlm}.

In general, the uniqueness of solutions for
\eqref{eq:gen_grad_constraint} is fairly easy to show if $g$ is
continuous and everywhere positive, and is known to hold, owing to
Jensen's work, if $g\equiv 0$. However, the case $g\ge 0$ seems to
have been entirely open before this paper. The situation resembles
the one with the infinity Poisson equation $\Delta_\infty u=g$:
the uniqueness is known to hold if $g>0$ or $g\equiv
0$, and the case $g\ge 0$ is an outstanding open problem, see
\cite{PSSW}. It is one of our main results in this paper that the
uniqueness for \eqref{eq:gen_grad_constraint} holds in the special
case $g=\chi_D$ under the fairly mild regularity condition
$\overline D=\overline{\inter D}$ on the set $D$, see Theorem
\ref{thm:unique1} below. Moreover, the uniqueness in general fails
if this condition is not fulfilled.

Our interest in \eqref{bvp:grad_constraint.intro} arises only
partially from the desire to generalize Jensen's results. Another
reason for considering this problem is its connection to the
boundary value problems
\begin{equation}\label{bvp:p_Lap_intro}
\begin{cases}
 \Delta_p u=g&\text{in $\Om$},\\
u=f&\text{on $\partial\Om$},
\end{cases}
\end{equation}
where $\Delta_p u=\diver(\abs{Du}^{p-2}Du)$ is the $p$-Laplace
operator, $1<p<\infty$, and $g\ge 0$. It is not difficult to show
that, up to selecting a subsequence, solutions $u_p$ to
\eqref{bvp:p_Lap_intro} converge as $p\to\infty$ to a limit
function that must satisfy \eqref{bvp:grad_constraint.intro} with
$D=\{x\in\Om\colon g(x)>0\}$. However, different subsequences may,
a priori, yield different limit functions. This possibility has
been previously excluded in the cases $g\equiv 0$ and $g>0$, the
latter under the additional assumption that $f=0$, see e.g.\
\cite{BBM}, \cite{IL}. Our results imply that the limit function
is unique for \emph{any} continuous functions $g\ge 0$ and 
$f$, see Theorem~\ref{theo.f.g}. In particular, the limit function depends on $g$ only via the
set $D=\{x\in\Om\colon g(x)>0\}$.

Needless to say, for $g\leq 0$ our techniques can be applied as
well, and then we encounter the equation
$$
\max\{ \Delta_\infty u, \chi_D - |Du| \} =0.
$$
Since the results are identical, we omit this case.

Further motivation for considering
\eqref{bvp:grad_constraint.intro} comes from its connection to
game theory. Recently, Peres, Schramm, Sheffield and Wilson
\cite{PSSW} introduced a two player random turn game called
``tug-of-war``, and showed that, as the step size converges to
zero, the value functions of this game converge to the unique
viscosity solution of the infinity Laplace equation $\Delta_\infty
u=0$. We define and study a variant of the tug-of-war game in
which one of the players has the option to sell his/hers turn to
the other player with a fixed price (that depends on the step
size) when the game token is in the set $D$. It is then shown that
the value functions of this new game converge to a solution of
\eqref{bvp:grad_constraint.intro}. Thus, besides its own interest, the game provides an
alternative way to prove the existence of a solution to
\eqref{bvp:grad_constraint.intro}.

The boundary value problem \eqref{bvp:grad_constraint.intro} may
have multiple solutions if the set $D$ is irregular, that is,
$\overline D\ne \overline{\inter D}$. However, the limit of the
value functions of our game is always the smallest solution and
hence  unique. We give several examples in which the game
solution and the solution constructed by taking the limit as $p$ goes
to infinity in the $p$-Laplace problems $\Delta_p u=\chi_D$ are
not the same. The possibility of having different solutions is
also motivated by stability considerations. Somewhat similar
results but on a different problem were recently obtained by
Yu in
\cite{y}.

 Our main uniqueness result, that
\eqref{bvp:grad_constraint.intro} has exactly one solution if
$\overline D= \overline{\inter D}$, is proved in a slightly
unusual manner. Indeed, instead of proving directly a comparison
principle for sub- and supersolutions of
\eqref{bvp:grad_constraint.intro}, we identify the solution in a way that guarantees its
uniqueness, see Theorem \ref{thm:charac} below for details. The intuition for this
identification comes partially
from the game theoretic interpretation of our problem. On the
other hand, the uniqueness proof for
\eqref{bvp:grad_constraint.intro} gave us a hint on how to prove similar result for the value
functions of the game, and so these two sides complement each
other nicely.

Due to the fact that the solutions need not be smooth and that the
infinity Laplacian is not in divergence form, we use viscosity
solutions when dealing with
\eqref{bvp:grad_constraint.intro}. However, since $\chi_D(x)$ can be viewed either as a
function, defined at every point of $\Om$, or as an element of
$L^\infty(\Om)$, defined only almost everywhere, one can use
either the standard notion or the $L^\infty$-viscosity solutions.
The first one fits well with the game theoretic approach, whereas
$L^\infty$-viscosity solutions are quite natural from the point of
view of $p$-Laplace approximation. We have chosen to use mostly
the standard notion of (continuous) viscosity solutions, mainly
because this makes it easier to compare our results with
what has been proved earlier.
For completeness, we
have included a short section explaining $L^\infty$-viscosity solutions of
\eqref{bvp:grad_constraint.intro}.

\tableofcontents

\section{Preliminaries}

\subsection{Viscosity solutions for the gradient constraint problem}

To be on the safe side, we begin by recalling what is meant by viscosity solutions
of the boundary value problem
\begin{equation}
\label{bvp:grad_constraint}
\begin{cases}
 \min\{\Delta_\infty u, \abs{Du}-\chi_D\}=0&\text{in $\Om$}\\
u=f&\text{on $\partial\Om$},
\end{cases}
\end{equation}
where $$\Delta_\infty u(x)= \sum_{i,j=1}^n u_{ij}(x)u_i(x)u_j(x)$$ is the infinity
Laplace operator and for $D\subset\Om$,
\[
\begin{split}
\chi_D(x)=
\begin{cases}
1,&x\in D,\\
0,& x\in \Om \setminus D.
\end{cases}
\end{split}
\]

First, the boundary condition ''$u=f$ on $\partial\Om$`` is understood in the classical
sense, that is, $\lim\limits_{x\to z} u(x)=f(z)$ for all $z\in\partial\Om$.

Second, to define viscosity solutions for the equation
\begin{equation}
\label{eq:grad_constraint}
 \min\{\Delta_\infty u, \abs{Du}-\chi_D\}=0,
\end{equation}
one needs to use the semicontinuous envelopes of $\chi_D$. To this
end, we denote by $\inter D$ and $\overline D$, respectively, the
(topological) interior and closure of the set~$D$.

\begin{definition}\label{def:viscosol}
An upper semicontinuous function $u\colon\Om\to\R$ is a
\emph{viscosity subsolution} to \eqref{eq:grad_constraint} in $\Om$ if, whenever
$ x \in\Om$ and $\varphi\in C^2(\Om)$ are such that
$u-\varphi$ has a strict local maximum at $ x$, then
\begin{equation}
\label{subineqs}
\min\{\Delta_\infty \vp(x), \abs{D\vp(x)}-\chi_{\inter {D}}(x)\}\ge 0.
\end{equation}

A lower semicontinuous function $v\colon\Om\to\R$ is a
\emph{viscosity supersolution} to \eqref{eq:grad_constraint} in
$\Om$ if, whenever $ x\in\Om$ and $\phi\in C^2(\Om)$ are such
that $v-\phi$ has a strict local minimum at $ x$, then
\begin{equation}\label{superineqs.22}
\min\{\Delta_\infty \phi(x), \abs{D\phi(x)}-\chi_{\ol {D}}(x)\}\le 0.
\end{equation}

Finally, a continuous function $h\colon\Om\to\R$ is a \emph{viscosity
solution} to \eqref{eq:grad_constraint} in $\Om$ if it is both a viscosity
subsolution and a viscosity supersolution.
\end{definition}

Sometimes it is convenient to replace the condition
"$u-\varphi$ has a strict local maximum at $x$" by the requirement that
"$\vp$ touches $u$ at $x$ from above", and to write
\eqref{subineqs} in the form
$$
\Delta_\infty \varphi(x)\ge 0\quad\text{and}\quad  \abs{D\varphi(x)}-\chi_{\inter D}(x)\ge 0.
$$
Similarly, we sometimes replace "$v-\phi$ has a strict local
minimum at $x$" by "$\phi$ touches $v$ at $x$ from below", and
write \eqref{superineqs.22} in the form
$$
\Delta_\infty \phi(x)\le
0\quad\text{or}\quad  \abs{D\phi(x)}-\chi_{\overline D}(x)\le 0.
$$

The reader should notice that if $\inter D$ is empty, then a solution $u$ to the infinity
Laplace equation $\Delta_\infty u=0$ also satisfies
 $\min\{\Delta_\infty u, \abs{Du}-\chi_D\}=0$.

\subsection{Patching and Jensen's equation}
The main difficulty in proving the uniqueness of solutions for the infinity Laplace equation
$\Delta_\infty u(x)=0$ is the very severe degeneracy of the equation at the points
where the gradient $Du$ vanishes. To overcome this, several approximation methods have been
introduced. The first one, due to Jensen \cite{J}, was to use the equation
\begin{equation}
\label{eq:jensen}
 \min\{\Delta_\infty u(x), \abs{Du(x)}-\eps\}=0,
\end{equation}
whose solutions are subsolutions of $\Delta_\infty u=0$ and have (at least formally)
a non-vanishing gradient. Another device, called ''patching``, appears in the papers
by Barron and Jensen \cite{bj} and by Crandall, Gunnarsson and Wang \cite{cgw}, and
it is based on the use of the eikonal equation. We show below that these two methods
actually coincide. This fact was mentioned in \cite{cgw}, but no proof for it was
given. The result will be crucial in the proof of our main uniqueness result,
Theorem \ref{thm:unique1} below.

To proceed,
we need some notation. We denote by
\[
\begin{split}
\Lip(u,B_r(x))=\inf\{L\in\R\,:\, \abs{u(z)-u(y)}\le L\abs{z-y}\trm{ for }z,y\in B_r(x)\}
\end{split}
\]
the least Lipschitz constant for $u$ on the ball $B_r(x)$. Let
$h\colon\Om\to\R$ be the unique viscosity solution to the infinity
Laplace equation $\Delta_\infty h=0$ in $\Om$ with boundary values
$h=f$ on $\partial\Om$. Then $h$ is everywhere differentiable, see
\cite{es}, and $\abs{Dh(x)}$ equals to the pointwise Lipschitz
constant of $h$,
\begin{equation*}
 L(h, x) := \lim_{r \to +0} \mathrm{Lip}(h, B_r(x))
\end{equation*}
for every $x\in\Om$. Since the map $x\mapsto L(h,x)$ is upper semicontinuous, see e.g.\ \cite{ceg},
this implies that the set
$$V_\eps := \{x \in \Om\colon \abs{Dh(x)} < \eps\}$$
is an open subset of $\Om$.
Now, define the ''patched solution``
$h_\eps\colon\overline\Om\to\R$ by first setting
$$
h_\eps=h\quad \trm{in }\overline\Om\setminus V_\eps,
$$
and then, for each connected component $U$ of $V_\eps$ and $x\in
U$, defining
$$
 h_\eps(x)
  = \sup_{y \in \partial U}
\left(
 h(y) - \varepsilon d_{U} (x, y)
\right),
$$
where $d_{U} (x, y)$ stands for the (interior) distance between
$x$ and $y$ in $U$. The results collected in the following
``patching lemma'' are taken from \cite{cgw}.

\begin{lemma}\label{lemma:patching} It holds that
\begin{enumerate}
 \item $\Delta_\infty h_\eps\ge 0$ in $\Om$ in the viscosity
 sense,
 \item $h_\eps = h$ on
          $\overline{\Omega} \setminus V_\varepsilon$ and
          $h_\eps \leq h$
          on $\overline{\Omega}$,
 \item  $L(h_\eps, x) \geq \eps$ for $x \in \Omega$,
\item $h_\eps$ is a viscosity solution to $\abs{Dh_\eps}-\eps=0$ in $V_\eps$.
\end{enumerate}
\end{lemma}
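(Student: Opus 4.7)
The plan is to verify the four items by working separately on each connected component $U$ of $V_\eps$, where $h_\eps$ is given by a Hopf--Lax formula for the eikonal equation $|Du| = \eps$, and on the complement $\Omega \setminus V_\eps$, where $h_\eps$ coincides with the infinity harmonic function $h$, and then to glue the two regimes across $\partial V_\eps$.

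I would begin with (2). The equality $h_\eps = h$ on $\overline{\Omega}\setminus V_\eps$ is immediate from the definition. For the inequality $h_\eps \leq h$ on $\overline\Omega$, I would first establish the intrinsic Lipschitz bound $|h(x) - h(y)| \leq \eps\, d_U(x, y)$ for $x, y \in \overline U$: this follows from $|Dh(x)| < \eps$ on $U$ together with the identity $|Dh(x)| = L(h, x)$ from \cite{es}, via a covering argument along rectifiable curves in $U$ and continuity up to $\partial U$. Taking $y \in \partial U$ yields $h(y) - \eps\, d_U(x, y) \leq h(x)$, and the supremum produces $h_\eps(x) \leq h(x)$. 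Item (4) is then the standard identification of $\sup_{y \in \partial U}(h(y) - \eps\, d_U(x, y))$ as the unique viscosity solution of $|Du| = \eps$ in $U$ with boundary data $h|_{\partial U}$; I would either quote this or verify the two viscosity inequalities directly using the dynamic-programming property $d_U(x, z) \leq d_U(x, y) + d_U(y, z)$.

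For (3), on $\Omega \setminus \overline{V_\eps}$ we have $h_\eps = h$ and $|Dh(x)| \geq \eps$ by the openness of $V_\eps$, whence $L(h_\eps, x) \geq \eps$. Inside a component $U$, I would choose an almost-optimal $y_0 \in \partial U$ in the defining supremum and a quasi-geodesic $\gamma$ from $x$ to $y_0$; the triangle inequality for $d_U$ forces $h_\eps(\gamma(t)) \geq h_\eps(x) + \eps t$, and combined with the $\eps$-Lipschitz upper bound from (4) this pins down $L(h_\eps, x) = \eps$. Points on $\partial V_\eps$ are handled by approximation from $V_\eps$ and continuity of $h_\eps$.

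Property (1) is the main technical obstacle. I would check the viscosity subsolution inequality $\Delta_\infty \varphi(x_0) \geq 0$ whenever $\varphi \in C^2$ touches $h_\eps$ strictly from above at $x_0 \in \Omega$. If $x_0 \in V_\eps$, I would follow a geodesic $\gamma$ from $x_0$ toward a near-maximizer $y_0 \in \partial U$: along $\gamma$, the eikonal forces $h_\eps(\gamma(t)) = h_\eps(x_0) + \eps t$, so $\varphi(\gamma(t)) \geq \varphi(x_0) + \eps t$ for small $t \geq 0$. Combined with the subsolution bound $|D\varphi(x_0)| \leq \eps$ from (4), a second-order Taylor expansion in the direction $\nu = \gamma'(0)$ pins down $D\varphi(x_0) = \eps\nu$ and forces $\langle D^2\varphi(x_0)\nu, \nu\rangle \geq 0$, giving $\Delta_\infty \varphi(x_0) = \eps^2 \langle D^2\varphi(x_0)\nu, \nu\rangle \geq 0$. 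For $x_0 \in \Omega \setminus V_\eps$ I would split into two subcases. If $x_0$ lies in the interior of $\Omega \setminus V_\eps$, then $h_\eps = h$ on a neighborhood and infinity subharmonicity of $h$ yields the inequality directly. If $x_0 \in \partial V_\eps$, I would apply a standard translation perturbation: for small $\eta \in \mathbb{R}^n$ the function $h_\eps - \varphi - \eta \cdot x$ attains its maximum on a small closed ball around $x_0$ at some point $x_\eta \to x_0$, and one uses the strict-maximum hypothesis together with the fact that every neighborhood of $x_0$ meets $V_\eps$ to select a sequence $\eta_k \to 0$ with $x_{\eta_k} \in V_\eps \cup \inter(\Omega \setminus V_\eps)$, reducing each case to one already handled and passing to the limit. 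Making this selection rigorous — ensuring that a subsequence of maximizers avoids $\partial V_\eps$ — is the step I expect to be the most delicate.
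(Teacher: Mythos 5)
First, note what you are comparing against: the paper does not prove this lemma at all --- it states that the results are ``taken from \cite{cgw}'' --- so you are supplying an argument that the authors import as a black box. Judged on its own, your treatment of items (2)--(4) is essentially sound: the bound $\abs{h(x)-h(y)}\le\eps\, d_U(x,y)$ follows from $L(h,\cdot)=\abs{Dh}<\eps$ on $U$ by a covering argument along curves in $U$, the Hopf--Lax representation then gives (4) and $h_\eps\le h$, and the slope estimates inside $U$ and on $\Om\setminus\overline{V_\eps}$ give (3). (At points of $\partial V_\eps$ you should invoke the upper semicontinuity of $x\mapsto L(h_\eps,x)$, or the fact that $h_\eps\le h$ with equality there together with $\abs{Dh}\ge\eps$; ``continuity of $h_\eps$'' by itself does not pass a lower bound on $L$ to the limit point.) The two interior cases of (1) also work: in $V_\eps$ your quasi-geodesic argument does pin down $D\varphi(x_0)=\eps\nu$ and $\langle D^2\varphi(x_0)\nu,\nu\rangle\ge 0$, provided the geodesic error is taken $o(\abs{z_k-x_0}^2)$; alternatively, $h_\eps$ is on $U$ a supremum of the infinity subharmonic functions $h(y)-\eps d_U(\cdot,y)$, which gives (1) there at once.

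The genuine gap is exactly where you flag it: test points $x_0\in\partial V_\eps$, and the translation perturbation does not close it. To force maximizers of $h_\eps-\varphi-\eta\cdot x$ off $\partial V_\eps$ you would need either that $\partial V_\eps$ is negligible or that the maximizers sweep out a set of positive measure (Jensen's lemma). Neither is available: $h_\eps$ is Lipschitz but not semiconvex, so Jensen's lemma does not apply without a sup-convolution that destroys the identity $h_\eps=h$ off $V_\eps$; and $V_\eps$ is the strict sublevel set of the merely upper semicontinuous function $L(h,\cdot)$ ($Dh$ is not known to be continuous), so $\partial V_\eps$ may a priori even have positive Lebesgue measure. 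The cheap gluing also fails: $\varphi\ge h_\eps$ near $x_0$ with $h_\eps(x_0)=h(x_0)$ does not yield $\varphi\ge h$, because $h_\eps\le h$ goes the wrong way. The standard repair, and the route taken in \cite{cgw}, is to abandon pointwise test functions for item (1) and verify instead that $h_\eps$ satisfies comparison with cones from above (equivalent to $\Delta_\infty h_\eps\ge 0$ by \cite{ceg}); this is a maximum-principle statement on subdomains that can be checked by splitting the subdomain along $V_\eps$, with no need to classify where a test function touches $h_\eps$.
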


Now, let $z_\eps\colon\Om\to\R$ be the unique viscosity solution to Jensen's equation
\eqref{eq:jensen} in $\Om$ with $z_\eps =f$ on $\partial\Om$. Then it holds that the
patched solution coincides with the solution to
\eqref{eq:jensen}.
\begin{theorem}\label{thm:jensen_is_patch}
Let $z_\eps\in C(\overline\Om)$ be the solution to \eqref{eq:jensen} and $h_\eps$ be defined as above. Then
$z_\eps=h_\eps$ in $\Om$.
\end{theorem}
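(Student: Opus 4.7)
The plan is to verify that $h_\eps$ is itself a viscosity solution of Jensen's equation $\min\{\Delta_\infty u, |Du|-\eps\}=0$ in $\Om$ with boundary values $f$. Since solutions to this equation are unique when $\eps > 0$ (as remarked in the introduction), the identity $z_\eps = h_\eps$ follows at once. Continuity of $h_\eps$ on $\overline\Om$ and the matching of boundary values are immediate from the construction together with Lemma \ref{lemma:patching} (2), so only the viscosity sub- and supersolution inequalities inside $\Om$ require checking.

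For the subsolution property, fix $\vp \in C^2(\Om)$ and $x_0 \in \Om$ such that $h_\eps - \vp$ attains a strict local maximum at $x_0$; the goal is $\Delta_\infty\vp(x_0)\geq 0$ together with $|D\vp(x_0)|\geq\eps$. The first inequality is Lemma \ref{lemma:patching} (1). For the gradient bound we split on the location of $x_0$. If $x_0 \in V_\eps$, then on the component $U$ containing $x_0$ the patching formula realises $h_\eps$ as the supremum of cones $y \mapsto h(y)-\eps\, d_U(\cdot, y)$; picking a maximiser $y^* \in \partial U$ and following a $d_U$-geodesic $\gamma$ from $x_0$ to $y^*$ gives $d_U(\gamma(t),y^*)=d_U(x_0,y^*)-t$ for small $t$, so
\[
h_\eps(\gamma(t)) \geq h(y^*) - \eps\bigl(d_U(x_0,y^*)-t\bigr) = h_\eps(x_0) + \eps t,
\]
and combined with $\vp \geq h_\eps$, $\vp(x_0) = h_\eps(x_0)$, this forces $D\vp(x_0)\cdot\gamma'(0) \geq \eps$. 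If $x_0$ lies in the interior of $\Om\setminus V_\eps$, then $h_\eps \equiv h$ in a neighbourhood of $x_0$, $h$ is differentiable at $x_0$ with $|Dh(x_0)| \geq \eps$ by definition of $V_\eps$, and the tangency forces $D\vp(x_0)=Dh(x_0)$. The remaining points $x_0 \in \partial V_\eps \cap \Om$ are handled by invoking (1) and (3) of Lemma \ref{lemma:patching} in tandem: $L(h_\eps, x_0) \geq \eps$, together with the infinity subharmonicity of $h_\eps$, controls the upper tangents at $x_0$ via the identification of the pointwise Lipschitz constant with the maximal one-sided radial growth rate for infinity subharmonic functions.

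For the supersolution property, take $\phi \in C^2(\Om)$ and $x_0 \in \Om$ with $h_\eps - \phi$ attaining a strict local minimum at $x_0$; the goal is $\Delta_\infty\phi(x_0) \leq 0$ or $|D\phi(x_0)| \leq \eps$. If $x_0 \in V_\eps$, the patching formula makes $h_\eps$ locally $\eps$-Lipschitz on $U$ (since $|h_\eps(x_1)-h_\eps(x_2)| \leq \eps\, d_U(x_1,x_2) = \eps|x_1-x_2|$ on sufficiently small Euclidean balls), so $\phi(y)-\phi(x_0) \leq h_\eps(y)-h_\eps(x_0) \leq \eps|y-x_0|$ and hence $|D\phi(x_0)| \leq \eps$. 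If $x_0 \in \Om \setminus V_\eps$, then $h_\eps(x_0) = h(x_0)$ and $h_\eps \leq h$ on $\overline\Om$ by Lemma \ref{lemma:patching} (2); thus $\phi \leq h_\eps \leq h$ with equality at $x_0$, so $\phi$ touches $h$ from below at $x_0$, and infinity harmonicity of $h$ gives $\Delta_\infty\phi(x_0) \leq 0$.

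The main obstacle will be the subsolution check at points $x_0 \in \partial V_\eps \cap \Om$: the cone-domination from within $V_\eps$ only produces the one-sided bound $D\vp(x_0)\cdot v \geq -\eps$ (the wrong sign), while the interior argument for $\Om\setminus V_\eps$ requires $h_\eps \equiv h$ in a neighbourhood, which fails precisely at boundary points of $V_\eps$. The delicate step is therefore the simultaneous use of (1) and (3) sketched above, together with the structure of $h$ near $x_0$ (in particular differentiability with $|Dh(x_0)| \geq \eps$); an alternative route is to approximate $x_0$ by interior points of $V_\eps$ through a quadratic perturbation of $\vp$, apply the $V_\eps$ cone argument there, and pass to the limit.
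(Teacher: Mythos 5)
Your overall strategy is exactly the paper's: show that $h_\eps$ itself is a viscosity solution of Jensen's equation with boundary data $f$ and then invoke Jensen's uniqueness theorem, and your supersolution argument and the $V_\eps$/interior-of-the-complement subsolution cases match the paper's reasoning. The one place you stop short --- the gradient bound $|D\vp(x_0)|\ge\eps$ for a test function touching $h_\eps$ from above at $x_0\in\partial V_\eps\cap\Om$, which you flag as the ``main obstacle'' --- is in fact disposed of in the paper by a two-line computation that works uniformly at \emph{every} $x_0\in\Om\setminus V_\eps$, so your three-way case split is unnecessary: since $\vp\ge h_\eps$ near $x_0$ with equality at $x_0$, and since $h_\eps$ is infinity subharmonic with $L(h_\eps,x_0)\ge\eps$ by parts (1) and (3) of Lemma \ref{lemma:patching}, the increasing slope estimate of \cite{ceg} gives
\[
\max_{\overline B_r(x_0)}\vp-\vp(x_0)\ \ge\ \max_{\overline B_r(x_0)}h_\eps-h_\eps(x_0)\ \ge\ r\,L(h_\eps,x_0)\ \ge\ \eps r ,
\]
and dividing by $r$ and letting $r\to 0$ yields $|D\vp(x_0)|\ge\eps$ directly. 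This is precisely the ``identification of the pointwise Lipschitz constant with the maximal one-sided radial growth rate'' you allude to, so your sketch names the right tool; you only needed to write the inequality chain rather than treat $\partial V_\eps\cap\Om$ as a separate delicate case (your alternative route via quadratic perturbations is not needed). With that step made explicit, your proof is complete and coincides with the paper's.
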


\begin{proof} Without loss of generality, we may assume that $\eps=1$.
Let us first show that $h_1$ is a supersolution to
\eqref{eq:jensen}. Let $\phi\in C^2(\Om)$ be such that $h_1-\phi$
has a local minimum at $x\in\Om$. If $x\in \Om\setminus V_1$, then
as $h(x)=h_1(x)$ and $h\ge h_1$ everywhere, we see that also
$h-\phi$ has a local minimum at $x\in\Om$. Since $h$ satisfies
$\Delta_\infty h=0$, this implies
\[
\min\{\Delta_\infty \phi(x), \abs{D\phi(x)}-1\}\le \Delta_\infty \phi(x) \le 0,
\]
as desired. On the other hand, if $x\in V_1$, then
$\abs{D\phi(x)}-1\le 0$ by Lemma~\ref{lemma:patching}, and again
we have
$$
\min\{\Delta_\infty \phi(x), \abs{D\phi(x)}-1\}\le 0.
$$
Thus $h_1$ is a supersolution to \eqref{eq:jensen}.

To prove that $h_1$ is a subsolution to \eqref{eq:jensen}, let
$\varphi\in C^2(\Om)$ be such that $h_1-\varphi$ has a local
maximum at $x\in\Om$. We need to prove that
$$\Delta_\infty \varphi(x)\ge
0 \trm{ and }\abs{D\varphi(x)}-1\ge 0.$$
 By Lemma \ref{lemma:patching}, the
first inequality holds no matter where $x$ lies, and the second is
true if $x\in V_1$. Thus we only have to show that
$$
\abs{D\varphi(x)}-1\ge 0
$$
if
$x\in \Om\setminus V_1$. But this is true because
\[
\max_{\overline B_r(x)}\varphi-\varphi(x) \ge \max_{\overline
B_r(x)}h_1-h_1(x) \ge r L(h_1,x) \ge r;
\]
see e.g.\ \cite{ceg} for the second inequality. The proof
of $z_1=h_1$ is completed upon recalling the uniqueness result for \eqref{eq:jensen} in \cite{J}.
\end{proof}

\section{Existence of solutions. A variational approach}
\label{sect-exist-variational}

In this section, we prove the existence of a viscosity solution to
the gradient constraint problem \eqref{bvp:grad_constraint} by
showing that solutions $u_p$ to
\begin{equation}\label{main.eq.p}
\begin{cases}
\Delta_p u= \chi_D \quad & \text{in $\Om$}\\
u=f \quad & \text{on $\partial\Om$}.
\end{cases}
\end{equation}
converge uniformly, as $p\to\infty$, to a function that satisfies
\eqref{bvp:grad_constraint}. In
fact, we prove a slightly more general result and show that the
convergence holds true even if we replace $\chi_D$ in
\eqref{main.eq.p} by a non-negative function $g\in L^\infty(\Om)$
for which $D=\{x\in\Om\colon g(x)>0\}$ and a certain non-degeneracy
assumption is valid, see \eqref{non-degene} below.

We begin by recalling the definitions of a weak and viscosity
solution for the equation $ \Delta_p u = g$, where $g\ge 0$ is
bounded but not necessarily continuous. Since we are mainly
interested in what happens when $p\to\infty$, we assume throughout
this section that $p> \max\{2,n\}$.

\begin{definition}\label{def:weaksol.p}
A function $u\in W^{1,p} (\Omega)\cap C(\Om)$ is a
\emph{weak solution} of  $\Delta_p u=g$ in $\Om$ if it satisfies
$$
- \int_\Omega |D u|^{p-2} D u \cdot D\varphi\,dx =
\int_\Omega g \varphi\,dx,
$$
for every $\varphi \in C^\infty_0 (\Omega)$.
\end{definition}

By a weak solution to the boundary value problem \eqref{main.eq.p} we mean a function
$u\in W^{1,p} (\Omega)\cap C(\overline\Om)$ that is a weak solution to  $\Delta_p u=g$ in $\Om$
and satisfies $u=f$ on $\partial\Om$. We also suppose that $f$ is
Lipschitz continuous to begin with, and fix a Lipschitz function
$F\colon\overline\Om\to\R$ such that $F=f$ on $\partial\Om$,
$\lip(F,\Om)=\lip(f,\partial\Om)$, and
$\norm{F}_{L^\infty(\Om)}=\norm{f}_{L^\infty(\partial\Om)}$. Since $p> n$ and $f$ is Lipschitz,
the conditions $u\in W^{1,p} (\Omega)\cap C(\overline\Om)$ and $u=f$ on $\partial\Om$
are equivalent to the statement that $u-F\in W^{1,p}_0(\Omega)$, see
\cite{m}.

\begin{lemma} \label{lemma.exis.p}
There exists a unique weak solution  $u\in W^{1,p} (\Omega)\cap C(\overline\Om)$ to  $\Delta_p u=g$ with
fixed Lipschitz continuous boundary values $f$, and it is
characterized as being a minimizer for the functional
$$
F_p (u) = \int_{\Omega} \frac{|D u|^p}{p} \, dx + \int_\Omega
g u \, dx
$$
in the set of functions $\{ u \in W^{1,p} (\Omega) \ : \ u =f \trm{ on } \partial \Om\}$.
\end{lemma}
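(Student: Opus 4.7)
The plan is to apply the direct method of the calculus of variations to $F_p$ on the affine admissible set $\mathcal{A} := \{u \in W^{1,p}(\Omega) : u - F \in W^{1,p}_0(\Omega)\}$, which by the authors' reference to \cite{m} (together with $p > n$ and Morrey's embedding $W^{1,p}(\Omega) \hookrightarrow C(\overline\Omega)$) coincides with the set of $W^{1,p} \cap C(\overline\Omega)$-functions attaining the boundary values $f$. The three items to verify are existence, uniqueness, and the equivalence ``minimizer $\iff$ weak solution'', and all three come out of standard convex-variational arguments.

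\emph{Coercivity and existence.} For $u = F + v$ with $v \in W^{1,p}_0(\Omega)$, Poincar\'e's inequality controls $\norm{u}_{L^p(\Omega)}$ by $\norm{Du}_{L^p(\Omega)} + 1$. Since $g \in L^\infty(\Omega)$, Young's inequality then bounds the linear term by
\[
\abs{\int_\Omega g u \, dx} \le \tfrac{1}{2p}\norm{Du}_{L^p(\Omega)}^p + C,
\]
so $F_p(u) \ge \tfrac{1}{2p}\norm{Du}_{L^p(\Omega)}^p - C$ on $\mathcal{A}$. A minimizing sequence $\{u_k\}$ is therefore bounded in $W^{1,p}(\Omega)$, and by reflexivity I pass to a weakly convergent subsequence $u_k \rightharpoonup u$. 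The set $\mathcal{A}$ is convex and strongly closed, hence weakly closed by Mazur's theorem, so $u \in \mathcal{A}$. Weak lower semicontinuity of the convex functional $u \mapsto \int_\Omega |Du|^p/p\,dx$ and weak continuity of $u \mapsto \int_\Omega g u\,dx$ (via the compact embedding $W^{1,p}(\Omega) \hookrightarrow L^1(\Omega)$) give $F_p(u) \le \liminf_k F_p(u_k)$, so $u$ minimizes.

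\emph{Euler--Lagrange equation and uniqueness.} For $\varphi \in C^\infty_0(\Omega)$, the map $t \mapsto F_p(u + t\varphi)$ is differentiable, and setting its derivative at $t = 0$ to zero yields
\[
\int_\Omega |Du|^{p-2} Du \cdot D\varphi \, dx + \int_\Omega g \varphi \, dx = 0,
\]
which is precisely Definition~\ref{def:weaksol.p}. Conversely, convexity of $F_p$ implies that any $u \in \mathcal{A}$ satisfying this Euler--Lagrange equation is a minimizer: for $w \in \mathcal{A}$ one has $w - u \in W^{1,p}_0(\Omega)$, approximable by $C^\infty_0$-functions, and the tangent-plane inequality gives $F_p(w) \ge F_p(u)$. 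Uniqueness follows from strict convexity of $\xi \mapsto |\xi|^p$ on $\Rn$: two minimizers $u_1, u_2 \in \mathcal{A}$ would force $Du_1 = Du_2$ a.e., and equal traces (both being in $F + W^{1,p}_0(\Omega)$) then force $u_1 = u_2$.

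The main obstacle is essentially nonexistent: this is a textbook application of the direct method. The only details worth a moment's thought are the justification that the Euler--Lagrange equation characterizes minimizers (rather than being merely necessary), which relies on the convexity of $F_p$, and the identification of pointwise equality $u = f$ on $\partial\Omega$ with $u - F \in W^{1,p}_0(\Omega)$, which the authors have already delegated to \cite{m} via the hypothesis $p > n$.
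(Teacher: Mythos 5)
Your proposal is correct and follows essentially the same route as the paper: the authors' proof simply invokes coercivity, weak lower semicontinuity, and strict convexity, deferring all details to Giusti's monograph, and your argument is exactly the standard direct-method elaboration of those three points (including the correct handling of uniqueness via strict convexity in the gradient variable plus Poincar\'e on $u_1-u_2\in W^{1,p}_0(\Omega)$). Nothing further is needed.
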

\begin{proof} The functional $F_p$ is coercive and weakly semicontinuous, hence
the minimum is attained. Moreover, this minimum is
a weak solution to  $\Delta_p u=g$ in the sense of Definition
\ref{def:weaksol.p}. Uniqueness follows from the strict convexity of
the functional.
For details, we refer to Giusti's monograph \cite{Giusti}.
\end{proof}

Due to the possible discontinuity of the
right hand side $g(x)$, we use semicontinuous envelopes when defining viscosity solutions.
Denote
\[
g_*(x)=\liminf_{y\to x} g(y),
\]
and
\[
g^*(x)=\limsup_{y\to x} g(y).
\]
We recall that $g_*$ is lower semicontinuous, $g^*$ upper semicontinuous, and
$g_*\le g\le g^*$. We assume that $g$ is non-degenerate in the sense that
\begin{equation}\label{non-degene}
g_*(x)>0\qquad\text{for all $x\in\inter D$},
\end{equation}
with $D=\{x\in\Om\colon g(x)>0\}$. This condition is used in
Theorem~\ref{limit_is_visco}.

Notice that if $g(x)=\chi_D(x)$, then
$$
g_*(x)=\chi_{\inter D}(x) \quad \trm{ and }\quad g^*(x)=\chi_{\overline D}(x).
$$
Thus \eqref{non-degene} holds in this case.
Observe also that since we assume that $p\ge 2$, the
equation $\Delta_p u=g$ is not singular at the points where the
gradient vanishes, and thus $x\mapsto
\Delta_p \phi(x)=(p-2) |D \phi|^{p-4}\Delta_\infty \phi (x) + |D \phi|^{p-2}
\Delta \phi (x)$ is well
defined and continuous for any $\phi\in C^2(\Om)$.

\begin{definition}\label{def:viscosol.p}
An upper semicontinuous function $u\colon\Om\to\R$ is a
\emph{viscosity subsolution} to  $\Delta_p u=g$ in $\Om$ if, whenever
$ x \in\Om$ and $\varphi\in C^2(\Om)$ are such that $u-\varphi$
has a strict local maximum at $ x$, then
$$
\Delta_p \varphi ( x)
\ge g_*( x).
$$

A lower semicontinuous function $v\colon\Om\to\R$ is a
\emph{viscosity supersolution} to  $\Delta_p u=g$ in
$\Om$ if, whenever $ x\in\Om$ and $\phi\in C^2(\Om)$ are such that
$v-\phi$ has a strict local minimum at $ x$, then
$$
\Delta_p
\phi( x)\le g^*( x).
$$
Finally, a continuous function $h\colon\Om\to\R$ is a
\emph{viscosity solution} to  $\Delta_p u=g$ in $\Om$ if it is
both a viscosity subsolution and a viscosity supersolution.
\end{definition}

\begin{proposition} \label{weak.implies.viscosity}
A continuous weak solution of  $\Delta_p u=g$ is a viscosity
solution.
\end{proposition}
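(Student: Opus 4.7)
The plan is to run a standard test function argument by contradiction, in which the semicontinuous envelopes of $g$ play the role of continuous right-hand sides locally near the contact point. I will describe the subsolution property; the supersolution case is entirely symmetric and uses $g^*$ together with $\eta=(\phi-u)_+$ in place of what follows.

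Let $\vp\in C^2(\Om)$ be such that $u-\vp$ has a strict local maximum at $x_0$, and suppose for contradiction that $\Delta_p\vp(x_0)<g_*(x_0)$. First I would exploit the continuity of $x\mapsto \Delta_p\vp(x)$ (which holds because $p\ge 2$, as observed in the excerpt) together with the lower semicontinuity of $g_*$ to produce a radius $r>0$ and a constant $\alpha>0$ with
\[
\Delta_p\vp(x)+\alpha\le g_*(x)\le g(x)\qquad\text{for every } x\in B_r(x_0)\subset\Om,
\]
while keeping the strict maximum of $u-\vp$ on $\overline{B_r(x_0)}$. Setting $m:=\max_{\partial B_r(x_0)}(u-\vp)<(u-\vp)(x_0)=:c$ and shifting by a constant, I would define the $C^2$ function $\psi(x):=\vp(x)+(m+c)/2$, so that $(u-\psi)(x_0)=(c-m)/2>0$ while $u-\psi\le (m-c)/2<0$ on $\partial B_r(x_0)$. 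Consequently $\eta:=(u-\psi)_+$ is a nonnegative, not identically zero, element of $W^{1,p}_0(B_r(x_0))\subset W^{1,p}_0(\Om)$.

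Next I would feed $\eta$ into the two natural integral identities. By density of $C_c^\infty(\Om)$ in $W^{1,p}_0(\Om)$, the weak formulation for $u$ yields
\[
-\int_\Om |Du|^{p-2}Du\cdot D\eta\,dx=\int_\Om g\,\eta\,dx,
\]
while integration by parts on the smooth $\psi$ (using $\eta=0$ near $\partial B_r(x_0)$), combined with $\Delta_p\psi\le g-\alpha$ on $B_r(x_0)$ and $\int\eta>0$, gives
\[
-\int_\Om |D\psi|^{p-2}D\psi\cdot D\eta\,dx=\int_{B_r(x_0)}\Delta_p\psi\cdot\eta\,dx<\int_\Om g\,\eta\,dx.
\]
Subtracting, and noting that in the weak sense $D\eta=(Du-D\psi)\chi_{\{u>\psi\}}$, I obtain the strict inequality
\[
\int_{\{u>\psi\}}\bigl(|D\psi|^{p-2}D\psi-|Du|^{p-2}Du\bigr)\cdot(Du-D\psi)\,dx>0,
\]
which contradicts the monotonicity $(|a|^{p-2}a-|b|^{p-2}b)\cdot(a-b)\ge 0$ of the $p$-Laplace vector field.

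The main delicate point is the interplay between the possible discontinuity of $g$ and the use of its semicontinuous envelopes: the strict defect $\Delta_p\vp(x_0)<g_*(x_0)$ must be propagated to a uniform pointwise gap $\Delta_p\psi<g$ on an entire ball, and this is exactly what the lower semicontinuity of $g_*$ together with $g_*\le g$ provides. Once this localization step is in hand, the remainder of the argument is routine for the $p$-Laplacian.
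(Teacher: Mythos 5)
Your proposal is correct and follows essentially the same route as the paper: argue by contradiction at the contact point, use the continuity of $x\mapsto\Delta_p\vp(x)$ together with the semicontinuity of the envelope to propagate the strict defect to a whole ball $B_r(x_0)$, and then shift the test function by a constant so that it becomes a strict weak sub/supersolution there that is ordered against $u$ on $\partial B_r(x_0)$ but not at the center. The only (cosmetic) difference is that where the paper invokes the comparison principle for weak solutions of $\Delta_p v=g$ as a black box, you prove exactly the needed comparison inline by testing both integral identities with $(u-\psi)_+$ and appealing to the monotonicity of $a\mapsto|a|^{p-2}a$.
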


\begin{proof}
Let $x \in \Omega$ and choose a test function $\phi$ touching  $u$ at $x$
from below, that is, \ $u(x)=\phi (x)$ and
$u-\phi$ has a strict minimum at $x$.
We want to show that
$$
(p-2) |D \phi|^{p-4}\Delta_\infty \phi (x) + |D \phi|^{p-2}
\Delta \phi (x) \le g^*(x).
$$
If this is not the case, then there exists a radius $r>0$
such that
$$
(p-2) |D \phi|^{p-4}\Delta_\infty \phi (y) + |D \phi|^{p-2}
\Delta \phi (y) > g^*(y),
$$
for every $y\in B_r(x)$. Set $m = \inf_{|y- x|=r}
(u-\phi)(y)$ and let $\psi (y) = \phi(y) + m/2$. This function
$\psi$ verifies $\psi (x) > u(x)$ and
$$
\mbox{div} ( |D \psi|^{p-2} D \psi) > g^*(y)\ge g(y),
$$
which, upon integration by parts, implies that $\psi$ is a weak
subsolution to  $\Delta_p u=g$ in $B_r(x)$. Thus we have that
$\psi\le u$ on $\partial B_r(x)$, $u$ is a weak solution and
$\psi$ a weak subsolution to  $\Delta_p v=g$, which by the
comparison principle (for weak solutions) implies that $u\ge \psi$
in $B_r(x)$. But this contradicts the inequality $\psi (x) >
u(x)$.

This proves that $u$ is a viscosity supersolution. The proof of
the fact that $u$ is a viscosity subsolution runs along similar lines.
\end{proof}

Next we prove that there is a subsequence of weak solutions to
\begin{equation}
\label{eq:p-sol}
\begin{split}
\begin{cases}
\Delta_p u=g & \trm{in } \Om \\
u_p=f& \trm{on } \partial \Om
\end{cases}
\end{split}
\end{equation}
that converges uniformly as $p\to \infty$.

\begin{lemma}\label{uniform_convergence}
There exists a function $u_\infty
\in W^{1,\infty}(\Omega)$ and a subsequence of solutions to the above problem, \eqref{eq:p-sol}, such that
$$
\lim_{p \to \infty} u_{p} (x) = u_\infty (x)
$$
uniformly in $\overline{\Omega}$ as $p\to \infty$.
\end{lemma}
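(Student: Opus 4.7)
The strategy is to obtain uniform-in-$p$ a priori estimates on $u_p$ and then invoke the Arzel\`a--Ascoli theorem. Specifically, I would first bound $\|Du_p\|_{L^p(\Om)}$ uniformly, then convert this via H\"older's inequality into a $W^{1,q}$-bound for some fixed $q>n$, and finally pass to the limit using Morrey's embedding.

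To establish the gradient bound, observe that $u_p-F\in W^{1,p}_0(\Om)$, so testing the weak formulation of $\Delta_p u_p=g$ with $u_p-F$ gives
\[
\int_\Om |Du_p|^p\,dx = \int_\Om |Du_p|^{p-2}Du_p\cdot DF\,dx - \int_\Om g(u_p-F)\,dx.
\]
H\"older's inequality bounds the first term on the right by $\|Du_p\|_{L^p}^{p-1}\|DF\|_{L^p}$. For the second term, the Poincar\'e inequality applied to $u_p-F\in W^{1,p}_0(\Om)$ (with constant $\operatorname{diam}(\Om)$, independent of $p$) combined with H\"older yields
\[
\Big|\int_\Om g(u_p-F)\,dx\Big|\leq \|g\|_\infty\operatorname{diam}(\Om)|\Om|^{1-1/p}\bigl(\|Du_p\|_{L^p}+\|DF\|_{L^p}\bigr).
\]
Writing $Y_p=\|Du_p\|_{L^p}$ and using $\|DF\|_{L^p}\leq L|\Om|^{1/p}$ with $L=\lip(F,\Om)$, Young's inequality absorbs the $Y_p^{p-1}\|DF\|_{L^p}$ term to produce $Y_p^p\leq L^p|\Om|+pC(Y_p+L|\Om|^{1/p})$ for a constant $C$ independent of $p$. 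A short dichotomy (either $Y_p\leq(4pC)^{1/(p-1)}\to 1$, or $Y_p^p/2$ dominates the linear term and forces $Y_p\leq 2^{1/p}L|\Om|^{1/p}\to L$) shows $Y_p\leq K$ for all $p$ large, with $K$ depending only on $L$, $\|g\|_\infty$, and~$\Om$.

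Fix any $q>n$. For $p>q$, H\"older gives $\|Du_p\|_{L^q(\Om)}\leq|\Om|^{1/q-1/p}Y_p$, uniformly bounded in $p$, while the Poincar\'e inequality applied to $u_p-F\in W^{1,q}_0(\Om)$ bounds $\|u_p\|_{L^q(\Om)}$ uniformly as well. Thus $\{u_p\}$ is bounded in $W^{1,q}(\Om)$, and by Morrey's embedding $W^{1,q}(\Om)\hookrightarrow C^{0,1-n/q}(\overline\Om)$ the family is equicontinuous and uniformly bounded on $\overline\Om$. Arzel\`a--Ascoli then extracts a subsequence $u_{p_k}$ converging uniformly on $\overline\Om$ to some $u_\infty\in C(\overline\Om)$. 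By a diagonal extraction along a sequence $q_j\to\infty$, we may arrange that along the same subsequence $Du_{p_k}\rightharpoonup Du_\infty$ weakly in $L^{q_j}(\Om)$ for every $j$; weak lower semicontinuity gives $\|Du_\infty\|_{L^{q_j}(\Om)}\leq|\Om|^{1/q_j}K$, and letting $j\to\infty$ yields $Du_\infty\in L^\infty(\Om)$ with $\|Du_\infty\|_\infty\leq K$, so that $u_\infty\in W^{1,\infty}(\Om)$.

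The main technical obstacle is the uniform-in-$p$ bound on $\|Du_p\|_{L^p}$ in the first step. The key ingredient enabling it is that the Poincar\'e constant for $W^{1,p}_0(\Om)$ may be chosen as $\operatorname{diam}(\Om)$ independently of $p$ (via the classical one-dimensional slicing proof), so no $p$-dependence enters the final estimate. Everything else is standard functional analysis.
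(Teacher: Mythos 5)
Your argument is correct and arrives at the same compactness statement, but the key a priori estimate is derived by a genuinely different route. The paper tests the equations for $u_p$ and for the $p$-harmonic function $h_p$ (same boundary data $f$) with $u_p-h_p\in W^{1,p}_0(\Om)$, invokes the vector monotonicity inequality $2^{2-p}\abs{a-b}^p\le(\abs{a}^{p-2}a-\abs{b}^{p-2}b)\cdot(a-b)$ to bound $\|Du_p-Dh_p\|_{L^p}$ directly, and then needs the nontrivial external fact that $h_p\to h$ uniformly (Jensen's convergence of $p$-harmonic functions to the infinity-harmonic extension) to conclude $u_p\to v+h$. You instead test with $u_p-F$, absorb the cross term $\|Du_p\|_{L^p}^{p-1}\|DF\|_{L^p}$ by Young's inequality with the natural exponents $p/(p-1)$ and $p$ (which indeed yields $Y_p^p\le L^p\abs{\Om}+pC(Y_p+L\abs{\Om}^{1/p})$, since the Young constant $\tfrac{p-1}{p}$ leaves a factor $\tfrac1p$ on both sides), and run a dichotomy on $Y_p$; this is more self-contained, avoids the theory of $p$-harmonic functions entirely, and as a bonus shows $\limsup_p\|Du_p\|_{L^p(\Om)}\le\max\{1,\lip(f)\}$, which is essentially the content of Lemma \ref{lem:lip_bound} that the paper proves separately. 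The remaining steps (H\"older down to a fixed $L^q$ with $q>n$, Morrey, Arzel\`a--Ascoli, diagonal extraction for membership in $W^{1,\infty}$) coincide with the paper's. One technical slip to repair: since $\Om$ is only assumed to be a bounded domain, the embedding $W^{1,q}(\Om)\hookrightarrow C^{0,1-n/q}(\overline\Om)$ need not hold without some boundary regularity or an extension operator; you should apply Morrey to the zero extension of $u_p-F\in W^{1,q}_0(\Om)$ (which you already have in hand from the Poincar\'e step) and then add back the Lipschitz function $F$, exactly as the paper does with $u_p-h_p\in W^{1,p}_0(\Om)$. With that one-line fix the proof is complete.
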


\begin{proof} Recall that we assumed
that $f$ is Lipschitz continuous. Let $h_p$ be the unique
$p$-harmonic function with boundary values $f$, that is, $h_p\in
W^{1,p}(\Om)$ satisfies
\begin{equation}\label{eq:pharm}
\begin{cases}
\Delta_p h_p = 0 & \text{in $\Om$}\\
h_p=f&\text{on $\partial\Om$}.
\end{cases}
\end{equation}
Now we can use $u_p-h_p\in W^{1,p}_0(\Om)$ (note that $u_p$ and
$h_p$ agree on $\partial \Omega$) as a test-function in the weak
formulations of
 $\Delta_p u=g$ and \eqref{eq:pharm}, and by subtracting the resulting equations obtain
\[
 \int_\Om (\abs{Du_p}^{p-2}Du_p-\abs{Dh_p}^{p-2}Dh_p)\cdot (Du_p-Dh_p)\, dx=\int_\Om g(u_p-h_p)\, dx.
\]
Using the well-known vector inequality
\[
2^{2-p}\abs{a-b}^p\le (\abs{a}^{p-2}a-\abs{b}^{p-2}b)\cdot(a-b),
\]
and H\"older's and Sobolev's inequalities (see
\cite[p.164]{gt} for the constants), this yields
\[
\begin{array}{l}
\displaystyle
\aveint{\Om}{} \abs{Du_p-Dh_p}^p\, dx \\[10pt]
\displaystyle \qquad \qquad
\leq 2^{p-2}\norm{g}_{L^\infty(\Omega)} \left(\frac{\abs{\Om}}{\omega_n}\right)^{1/n}
\left(\aveint{\Om}{} \abs{Du_p-Dh_p}^p\, dx\right)^{1/p},
\end{array}
\]
where $\omega_n$ is the measure of unit ball in $\R^n$,
$\abs{\Om}$ the measure of $\Om$, and $\aveint{}{}$ denotes the
averaged integral $\aveint{\Om}{}=\frac1{\abs{\Om}}\int_\Omega$.
Thus
\[
\left(\aveint{\Om}{} \abs{Du_p-Dh_p}^p\, dx\right)^{1/p} \le
 2^{\frac{p-2}{p-1}}\norm{g}_{L^\infty(\Om)}^{1/(p-1)}
\left(\frac{\abs{\Om}}{\omega_n}\right)^{1/n(p-1)},
\]
and if $n<m<p$, H\"older's inequality gives
\begin{equation}
\label{eq:W-infty-bound}
\begin{split}
\left(\aveint{\Om}{} \abs{Du_p-Dh_p}^m\, dx\right)^{1/m} \le 2
\max\left\{1,\norm{g}_{L^\infty(\Om)} \left(\frac{\abs{\Om}}{\omega_n}\right)^{1/n}\right\}.
\end{split}
\end{equation}
We infer from Morrey's inequality that the sequence
$\{u_p-h_p\}_{p\ge m}$ is bounded in $C^{0,1-n/m}(\overline\Om)$,
the space of $(1-n/m)$-H\"older continuous functions.
Thus, in view of Arzel\`a-Ascoli's theorem, there is $v\in C(\overline\Om)$ such that
(up to selecting a subsequence)
$u_p-h_p\to v$ as $p\to\infty$. Since the constant on the right-hand side of
\eqref{eq:W-infty-bound} is independent of $m$, a diagonal
argument shows that
$v\in W^{1,\infty}(\Om)$. Moreover, as
$h_p\to h$ uniformly in $\overline\Om$, where $h\in W^{1,\infty}(\Om)$ is the unique
solution to $\Delta_\infty h=0$ that agrees with $f$ on $\partial\Om$ (see \cite{J}),
we have that $u_p\to u_\infty:=v+h\in W^{1,\infty}(\Om)$ uniformly
in $\overline\Om$.
\end{proof}

\begin{theorem}\label{limit_is_visco}
A uniform limit $u_\infty$ of a subsequence $u_p$ as $p \to \infty$ is a
viscosity solution to \eqref{bvp:grad_constraint}.
\end{theorem}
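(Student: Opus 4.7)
The plan is to establish the boundary condition $u_\infty = f$ on $\partial\Om$ and then to verify separately that $u_\infty$ is both a viscosity subsolution and a viscosity supersolution of \eqref{eq:grad_constraint}, by passing to the limit in the viscosity inequalities satisfied by $u_p$ at suitably chosen nearby points. The boundary condition is immediate, since $u_p = f$ on $\partial\Om$ and the convergence $u_p \to u_\infty$ is uniform on $\overline\Om$.

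For the subsolution property, let $\varphi \in C^2(\Om)$ be such that $u_\infty - \varphi$ has a strict local maximum at $x \in \Om$. By uniform convergence one can select $x_p \to x$ at which $u_p - \varphi$ attains a local maximum, and Proposition~\ref{weak.implies.viscosity} then gives
\[
(p-2)\abs{D\varphi(x_p)}^{p-4}\Delta_\infty \varphi(x_p) + \abs{D\varphi(x_p)}^{p-2}\Delta \varphi(x_p) \ge g_*(x_p).
\]
To deduce $\Delta_\infty \varphi(x) \ge 0$: if $\abs{D\varphi(x)} = 0$, then $\Delta_\infty \varphi(x) = 0$ automatically; otherwise assuming $\Delta_\infty \varphi(x) < 0$ the left-hand side tends to $-\infty$ as $p\to\infty$, contradicting $g_* \ge 0$. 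For the bound $\abs{D\varphi(x)} \ge \chi_{\inter D}(x)$, the only non-trivial case is $x \in \inter D$, where \eqref{non-degene} yields $g_*(x_p) \ge c > 0$ for $p$ large. Then if $\abs{D\varphi(x)} < 1$, the absolute value of the left-hand side is bounded by $C p \abs{D\varphi(x_p)}^{p-4}$, which decays exponentially in $p$, contradicting the uniform positive lower bound on the right.

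The supersolution property follows by a symmetric argument. For $\phi \in C^2(\Om)$ with $u_\infty - \phi$ attaining a strict local minimum at $x$, take $x_p \to x$ with $u_p - \phi$ minimized at $x_p$, so that
\[
(p-2)\abs{D\phi(x_p)}^{p-4}\Delta_\infty \phi(x_p) + \abs{D\phi(x_p)}^{p-2}\Delta \phi(x_p) \le g^*(x_p).
\]
Suppose, for contradiction, that both $\Delta_\infty \phi(x) > 0$ and $\abs{D\phi(x)} > \chi_{\ol D}(x)$. If $x \notin \ol D$, then $g^* \equiv 0$ on an open neighborhood of $x$, while the left-hand side is eventually strictly positive, a contradiction. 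If $x \in \ol D$, then $\abs{D\phi(x)} > 1$ strictly, so $\abs{D\phi(x_p)}^{p-4}$ grows exponentially and the left-hand side diverges to $+\infty$, violating the uniform bound $g^*(x_p) \le \norm{g}_{L^\infty(\Om)}$.

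The main obstacle is the subsolution direction inside $\inter D$, namely extracting the gradient constraint $\abs{D\varphi(x)} \ge 1$ from the equation $\Delta_p u = g$ in the limit $p \to \infty$. This is precisely where the non-degeneracy assumption \eqref{non-degene} plays an essential role: without a strictly positive lower bound for $g_*$ on $\inter D$, the exponential decay of $\abs{D\varphi(x_p)}^{p-4}$ would not produce a contradiction, and the gradient bound would be lost in the passage to the limit.
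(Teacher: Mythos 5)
Your proposal is correct and follows essentially the same route as the paper: pass the viscosity inequalities for $\Delta_p u_p=g$ (via Proposition~\ref{weak.implies.viscosity}) to the limit at touching points $x_p\to x$, and exploit the decay or growth of $(p-2)\abs{D\varphi(x_p)}^{p-4}$ together with the non-degeneracy condition \eqref{non-degene} to recover both the infinity-Laplacian inequality and the gradient constraint, exactly as in the paper's argument (which merely divides by $(p-2)\abs{D\varphi(x_p)}^{p-4}$ before letting $p\to\infty$ instead of arguing by contradiction on the undivided inequality). One small overstatement: when $\Delta_\infty\varphi(x)<0$ and $\abs{D\varphi(x)}<1$ the left-hand side need not tend to $-\infty$ (it may tend to $0^-$), but your contradiction with $g_*\ge 0$ survives because the left-hand side is in any case eventually strictly negative.
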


\begin{proof}
From the uniform convergence it is clear that $u_\infty$ is
continuous and satisfies $u_\infty = f$ on $\partial \Omega$.

Next, to show that $u_\infty$ is a supersolution, assume that $u_{\infty} -\phi$ has a strict
minimum at $x\in \Omega$. We have to check that
\begin{equation}\label{superineqs.bb}
 \Delta_\infty \phi( x)\le 0\quad\text{or}\quad  \abs{D\phi( x)}
 -\chi_{\overline D}( x)\le 0.
\end{equation}

By the uniform convergence of $u_{p}$ to $u_{\infty}$ there are
points $x_{p}$ such that $u_{p} -\phi$ has a minimum at
$x_{p}$ and $x_{p} \to  x $ as $p \to \infty$. At those
points we have
$$
(p-2) |D \phi|^{p-4} \Delta_\infty \phi (x_{p}) + |D
\phi|^{p-2} \Delta \phi (x_{p}) \le g^*(x_{p} ).
$$

Let us suppose that $\abs{D\phi( x)}>\chi_{\overline D}(x)$, since
otherwise \eqref{superineqs.bb} clearly holds. Then $\abs{D\phi(
x)}>0$, and hence $\abs{D\phi(x_{p})}>0$ for $p$ large enough by
continuity. Thus we may divide by $(p-2) |D \phi(x_p)|^{p-4}$ in
the inequality above and obtain
\begin{equation}\label{pppp}
\Delta_\infty \phi (x_{p}) \le  \displaystyle\frac{1}{p-2} |D
\phi|^{2}
\Delta \phi (x_{p}) + \frac{g^* (x_{p}) }{(p-2) |D \phi(x_p)|^{p-4}}.
\end{equation}
Notice that if $ x\notin\overline{D}=\supp g$, then \eqref{pppp} implies
$\Delta_\infty\phi( x)\le 0$. On the other hand, if $x\in\overline{D}$, then
$\abs{D\phi( x)}> \chi_{\overline D}( x)=1$, which implies
$|D \phi(x_{p})|^{p-4}\to \infty$ as $p\to\infty$. In view of \eqref{pppp},
this gives again $\Delta_\infty\phi( x)\le 0$. Thus \eqref{superineqs.bb}
is valid.

To show that $u_\infty$ is also a subsolution to \eqref{bvp:grad_constraint},
we fix $\varphi\in C^2(\Om)$ such that $u_{\infty} -\varphi$ has a
strict maximum at $ x\in \Om$. We have to check that
\begin{equation}\label{subineqs.bb}
 \Delta_\infty \varphi ( x)\ge 0\quad\text{and}\quad  \abs{D\varphi( x)}
 -\chi_{\inter D}( x)\ge 0.
\end{equation}
By the uniform convergence of $u_{p}$ to $u_{\infty}$, there are
points $x_{p}$ such that $u_{p} -\varphi$ has a maximum at
$x_{p}$ and $x_{p} \to  x $ as $p \to \infty$. At those
points we have
\begin{equation}\label{p-ineq}
(p-2) |D \varphi|^{p-4} \Delta_\infty \varphi (x_{p}) + |D
\varphi|^{p-2} \Delta \varphi (x_{p}) \ge g_*(x_{p}).
\end{equation}

If $ x\notin\inter D$ and $\abs{D\varphi ( x)}=0$, then
\eqref{subineqs.bb} clearly holds. On the other hand, if $
x\in\inter D$, then by \eqref{non-degene}, $g_*(x_{p})>0$ for
$p$ large. In view of \eqref{p-ineq}, this implies that we must
have  $D\varphi (x_{p})\ne 0$. Thus, we can divide in
\eqref{p-ineq} by $(p-2) |D \varphi(x_p)|^{p-4}$ to obtain
\begin{equation}\label{pppp_and_one_more_p}
\Delta_\infty \varphi (x_{p}) \ge  -\displaystyle\frac{1}{p-2} |D
\psi|^{2}
\Delta \varphi (x_{p}) + \frac{g_*(x_{p}) }{(p-2) |D \varphi(x_p)|^{p-4}}.
\end{equation}
Since $g_*$ is non-negative, letting $p\to\infty$ in
\eqref{pppp_and_one_more_p} yields $\Delta_\infty \varphi ( x)\ge
0$. Moreover, if $ x\in\inter D$ and $\abs{D\varphi(
x)}<\chi_{\inter D}( x)=1$, then as $g_*( x)>0$, the right side of
\eqref{pppp_and_one_more_p} tends to infinity, whereas the left
side remains bounded, a contradiction. Therefore we must also have
 $\abs{D\varphi( x)} -\chi_{\inter D}( x)\ge 0$, which concludes the proof.
\end{proof}


Theorem \ref{limit_is_visco} says that the boundary value problem
\eqref{bvp:grad_constraint} has a solution if $f$ is assumed to be Lipschitz. Owing to
Jensen's uniqueness results, this restriction can be removed.

\begin{theorem}\label{thm:gen_existence}
The gradient constraint problem \eqref{bvp:grad_constraint} has at least one
solution for any $f\in C(\partial\Om)$ and $D\subset\Om$.
\end{theorem}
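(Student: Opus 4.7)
The plan is to approximate $f \in C(\partial\Om)$ uniformly by Lipschitz boundary data, apply Theorem \ref{limit_is_visco} to each approximation, and use the comparison principle for the $p$-Laplace equation with \emph{fixed} right-hand side $\chi_D$ to convert uniform closeness of boundary data into uniform closeness of the corresponding viscosity solutions.

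First, I would choose Lipschitz functions $f_k \colon \partial\Om \to \R$ with $f_k \to f$ uniformly on $\partial\Om$; for instance the inf-convolutions $f_k(x) = \inf_{y \in \partial\Om}(f(y) + k|x-y|)$ are automatically $k$-Lipschitz on $\partial\Om$, and converge uniformly to $f$ by compactness of $\partial\Om$ together with the uniform continuity of $f$. For each $k$, Theorem \ref{limit_is_visco} produces a viscosity solution $u_k$ of \eqref{bvp:grad_constraint} with boundary data $f_k$, obtained as a uniform limit along a subsequence of weak solutions $u_p^{(k)}$ of $\Delta_p u = \chi_D$ with $u_p^{(k)} = f_k$ on $\partial\Om$. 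Using the $m$-independent bound \eqref{eq:W-infty-bound} together with a Cantor-type diagonal extraction, I may assume that there is a \emph{single} sequence $p_m \to \infty$ along which $u_{p_m}^{(k)} \to u_k$ uniformly on $\overline\Om$ for every $k$.

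Next, I would promote closeness of the Lipschitz boundary data to closeness of the corresponding solutions. For any fixed $p > \max\{2,n\}$, both $u_p^{(k)}$ and $u_p^{(j)}$ are weak solutions of the identical equation $\Delta_p u = \chi_D$, so they differ only through their boundary values. Testing the subtracted weak formulations against the admissible function $(u_p^{(k)} - u_p^{(j)} - \|f_k - f_j\|_{L^\infty(\partial\Om)})_+ \in W^{1,p}_0(\Om)$ and invoking the standard monotonicity of the $p$-Laplacian forces this truncation to be zero, which yields
$$\|u_p^{(k)} - u_p^{(j)}\|_{L^\infty(\Om)} \leq \|f_k - f_j\|_{L^\infty(\partial\Om)}.$$
Passing to the limit along $p_m$ gives the same bound for $u_k$ and $u_j$, so $\{u_k\}$ is Cauchy in $C(\overline\Om)$ and converges uniformly to some $u \in C(\overline\Om)$ with $u = f$ on $\partial\Om$.

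To conclude, the standard stability of viscosity sub- and supersolutions under uniform convergence shows that $u$ is itself a viscosity solution of \eqref{eq:grad_constraint}; one uses here that the subsolution inequality involves the lower semicontinuous $\chi_{\inter D}$ and the supersolution inequality the upper semicontinuous $\chi_{\overline D}$, which precisely allows the respective inequalities to pass to the limit through the jumps of $\chi_D$. The main obstacle I anticipate is the diagonal extraction step, which must be compatible with the subsequences chosen in Theorem \ref{limit_is_visco}, together with verifying the $p$-Laplacian comparison estimate cleanly enough that it survives the passage $p \to \infty$; the stability step itself is routine once those two ingredients are in place.
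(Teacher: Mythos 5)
Your proof is correct, and while it shares the paper's overall skeleton (Lipschitz approximation of $f$, solutions from Theorem \ref{limit_is_visco}, stability of viscosity solutions under uniform convergence), it diverges in the one step that actually requires work: recovering the boundary condition $u=f$ on $\partial\Om$. The paper only extracts a \emph{locally} uniformly convergent subsequence of the $u_j$ (via interior gradient bounds for infinity subharmonic functions) and then controls the boundary behaviour by sandwiching, $z_j\le u_j\le h_j$, between the solutions of Jensen's equation and of the infinity Laplace equation with data $f_j$, invoking Jensen's comparison results. You instead exploit the $L^\infty$-contraction of the $p$-Laplace Dirichlet problem in its boundary data (same right-hand side $\chi_D$, monotonicity of the operator tested against the truncation $(u_p^{(k)}-u_p^{(j)}-\|f_k-f_j\|_\infty)_+$), pass this to the limit along a common diagonal sequence $p_m$, and conclude that $(u_k)$ is Cauchy in $C(\overline\Om)$. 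Your route buys a stronger statement -- global uniform convergence on $\overline\Om$ and in fact a sup-norm contraction $\|u_k-u_j\|_{L^\infty(\Om)}\le\|f_k-f_j\|_{L^\infty(\partial\Om)}$ for the variational solutions -- at the cost of the diagonal extraction, which is genuinely needed here (the paper itself warns that different subsequences in $p$ may a priori give different limits, so the estimate between $u_k$ and $u_j$ only survives the limit if both are taken along the same sequence $p_m$); you correctly flag and handle this. The paper's route avoids that bookkeeping and works entirely at the level of the limit equations, but yields only $z\le u\le h$ at the boundary rather than a quantitative modulus. Your stability step, including the observation that the lower semicontinuity of $\chi_{\inter D}$ and upper semicontinuity of $\chi_{\overline D}$ are exactly what lets the sub- and supersolution inequalities pass to the limit, is accurate and matches what the paper delegates to the citation of Crandall--Ishii--Lions.
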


\begin{proof} Let $f_j$ be a sequence of Lipschitz functions converging
to $f$ uniformly on $\partial\Om$ and let $u_j$ be a solution to
\eqref{eq:grad_constraint} such that $u_j=f_j$ on $\partial\Om$, provided by
Theorem \ref{limit_is_visco}. Since $u_j$ is a subsolution to the
infinity Laplace equation, for every $\Om'\subset\subset\Om$ there
is $C>0$ such that $$
\norm{Du_j}_{L^\infty (\Om')}\le C,$$
see e.g.\ \cite{ACJ}. Thus, up to selecting a subsequence, there is a
locally Lipschitz continuous $u$ such that $u_j\to u$ locally uniformly.
Moreover, it follows from the standard stability results for viscosity
solutions, see \cite{CIL}, that $u$ is a solution to \eqref{eq:grad_constraint}.

Thus we only need to prove that $u=f$ on $\partial\Om$. Let $h_j$ and $z_j$
be the unique solutions to the infinity Laplace equation and Jensen's equation
\eqref{eq:jensen} (with $\eps=1$), respectively, such that $h_j=z_j=f_j$
on $\partial\Om$. By comparison, $z_j\le u_j\le h_j$ on $\overline\Omega$, and hence
$z\le u\le h$ on $\overline\Omega$, where $h$ and $z$
are the unique solutions to the infinity Laplace equation and
\eqref{eq:jensen}, respectively, with $h=z=f$ on $\partial\Om$ (cf. \cite[Corollary 3.14]{J}). In particular,
$u=f$ on $\partial\Om$, as desired.
\end{proof}

We close this section by proving a sharp a priori bound for the
solutions of \eqref{bvp:grad_constraint} that are obtained using
the $p$-Laplace approximation. To this end, we will again assume
that $f$ is Lipschitz and recall that $F$ denotes a Lipschitz
extension of $f$ that satisfies $\norm{F}_{L^\infty
(\Om)}=\norm{f}_{L^\infty (\partial\Om)}$.

\begin{lemma}\label{lem:lip_bound}
A uniform limit $u_\infty$ of a subsequence $u_p$ as $p \to \infty$ satisfies
\[
\norm{Du_\infty}_{L^\infty(\Om)} \le \max\{1,\lip(f)\}.
\]
\end{lemma}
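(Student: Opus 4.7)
The plan is to exploit the variational characterization from Lemma~\ref{lemma.exis.p}, comparing the energy of $u_p$ with that of the Lipschitz extension $F$ of $f$. Setting $L=\max\{1,\lip(f)\}$ so that $|DF|\le L$ a.e.\ in $\Om$, the minimizer inequality $F_p(u_p)\le F_p(F)$ rearranges to
\[
\int_\Om \frac{|Du_p|^p}{p}\,dx\;\le\;\int_\Om \frac{|DF|^p}{p}\,dx + \int_\Om g(F-u_p)\,dx\;\le\;\frac{L^p|\Om|}{p}+M,
\]
where $M$ is independent of $p$: the uniform convergence $u_p\to u_\infty$ on $\overline\Om$ provided by Lemma~\ref{uniform_convergence} gives a uniform $L^\infty$-bound on $u_p$, and $0\le g\le 1$ does the rest.

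Next, extracting $p$-th roots should convert this energy bound into an asymptotically sharp gradient bound. Dividing through by $|\Om|$ I would obtain
\[
\left(\aveint{\Om}{}|Du_p|^p\,dx\right)^{1/p}\;\le\;\left(L^p+\frac{pM}{|\Om|}\right)^{1/p}\;\longrightarrow\; L\qquad\text{as }p\to\infty,
\]
because $\bigl(1+pM/(L^p|\Om|)\bigr)^{1/p}\to 1$ for any $L\ge 1$ (using $L^p\to\infty$ if $L>1$, and the standard fact $(1+Cp)^{1/p}\to 1$ if $L=1$). For any fixed exponent $m>n$, H\"older's inequality then gives, whenever $p\ge m$,
\[
\left(\aveint{\Om}{}|Du_p|^m\,dx\right)^{1/m}\;\le\;\left(\aveint{\Om}{}|Du_p|^p\,dx\right)^{1/p},
\]
so that $\limsup_{p\to\infty}\bigl(\aveint{\Om}{}|Du_p|^m\,dx\bigr)^{1/m}\le L$.

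To finish, I would pass to the weak limit. The uniform $L^m$-bound on $\{Du_p\}$ combined with the uniform convergence $u_p\to u_\infty$ forces $Du_p\rightharpoonup Du_\infty$ weakly in $L^m(\Om)$ along the subsequence. Lower semicontinuity of the $L^m$-norm under weak convergence then yields
\[
\left(\aveint{\Om}{}|Du_\infty|^m\,dx\right)^{1/m}\;\le\; L\qquad\text{for every }m>n,
\]
and letting $m\to\infty$ produces $\|Du_\infty\|_{L^\infty(\Om)}\le L$, as claimed. The only delicate point is the asymptotic vanishing of the additive correction $pM$ after taking $p$-th roots; once this is in hand, the uniform $L^\infty$-bound on $u_p$, the weak compactness in $L^m$, and the lower semicontinuity of the norm are all routine.
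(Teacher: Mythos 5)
Your argument is correct and follows essentially the same route as the paper: compare $F_p(u_p)$ with $F_p(F)$ using the variational characterization, extract $p$-th roots to see that the averaged $L^p$-norms of $Du_p$ tend to $\max\{1,\lip(f)\}$, and transfer the bound to $u_\infty$ by weak lower semicontinuity in each fixed $L^m$ followed by $m\to\infty$. The only difference is that you bound $\int_\Om g(F-u_p)\,dx$ directly by a constant via the uniform boundedness of the (uniformly convergent) subsequence, whereas the paper controls $\int_\Om g u_p\,dx$ through H\"older and Sobolev inequalities in terms of $\norm{Du_p}_{L^p(\Om)}$ and then invokes the a priori gradient bound from Lemma~\ref{uniform_convergence}; your shortcut is legitimate and, if anything, slightly cleaner.
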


\begin{proof}
Recall that we have denoted by $u_p$ the minimizer of
$$
F_p(u)=\frac1p\int_\Omega|D u|^p\, dx +\int_\Omega gu \, dx
$$
on the set $ K=\{u\in W^{1,p}(\Omega): \, u=f \text{ on
}\partial\Omega\}$. Then $F\in K$ and we have
\[
\begin{split}
\frac1p\int_\Omega|D u_p|^p\, dx +\int_\Omega gu_p \, dx \leq&\,
\frac1p\int_\Omega|DF|^p\, dx+\int_\Omega gF\, dx \\ \leq&\, \frac{\lip(f)^p
|\Omega|}{p} +
\norm{g}_{L^\infty(\Omega)}\norm{f}_{L^\infty(\partial\Omega)}\abs{\Om}.
\end{split}
\]
This together with H\"older's inequality implies
\begin{equation}
\label{eq:grad-bound}
\begin{split}
\int_\Omega|D u_p|^p\, dx \leq&\, \lip(f)^p
|\Omega| + pC - p\int_\Omega gu_p\, dx \\ \leq&\, \lip(f)^p
|\Omega| + pC + p \norm{u_p}_{L^p(\Om)} \norm{g}_{L^{p'}(\Om)}.
\end{split}
\end{equation}
By Sobolev's inequality, we have
\begin{equation}
\label{eq:Lp-bound}
\begin{split}
\norm{u_p}_{L^p(\Om)} \le&\, \norm{u_p-F}_{L^p(\Om)}+\norm{F}_{L^p(\Om)}\\
\le&\, C(\Om,n)\norm{Du_p-DF}_{L^p(\Om)}+\norm{f}_{L^\infty(\partial\Om)}\abs{\Om}^{1/p}\\
\le&\, C(\Om,n)\norm{Du_p}_{L^p(\Om)}+C(n,\Om)(\lip(f)+\norm{f}_{L^\infty (\partial \Omega)}).
\end{split}
\end{equation}
By combining \eqref{eq:grad-bound} and \eqref{eq:Lp-bound},
we obtain
$$
\int_\Omega|D u_p|^p \, dx
\leq  \lip(f)^p |\Omega| + pC + C p\|D u_p\|_{L^p(\Omega)},
$$
that is,
\begin{equation}
\label{eq:another-gradient-bound}
\begin{split}
\|D u_p\|_{L^p (\Omega)}
\leq  \left(C\lip(f)^p +  C p + C p\|D u_p\|_{L^p (\Omega)}\right)^{1/p},
\end{split}
\end{equation}
where the positive constant $C$ depends on $n$, $\Om$, $f$ and
$g$, but is independent of $p$ for $p> n$. Observe that
\[
(C a^p+p\, b+p\, c)^{1/p}\to \max\{a,1\}\quad\text{as $p\to\infty$}
\]
and recall the preliminary bound
\[
\begin{split}
\|D u_p\|_{L^p (\Omega)} \le&\, \|D h_p\|_{L^p (\Omega)}+ 2\abs{\Om}^{1/p}
\max\left\{1,\norm{g}_{L^\infty (\Omega)} \left(\frac{\abs{\Om}}{\omega_n}\right)^{1/n}\right\}\\
\le&\,C(n,\Om)(\lip(f)+\norm{g}_{L^\infty (\Omega)}+1)
\end{split}
\]
that was obtained in course of the proof of Lemma
\ref{uniform_convergence}. Combining these facts with \eqref{eq:another-gradient-bound}
we get
\[
\norm{Du_\infty}_{L^\infty (\Om)} \le \max\{1,\lip(f)\}.
\]
This ends the proof.
\end{proof}

\begin{remark} For future reference, we note that all the results in this section, except for
Theorem \ref{limit_is_visco}, hold for any bounded $g$ without any
sign restrictions.
\end{remark}


\section{Uniqueness and comparison results}\label{sec:uniqueness-and-comparison}
In this section, we show that under a suitable topological
assumption on $D$, the problem
\eqref{bvp:grad_constraint}, that is,  $\min\{\Delta_\infty u,
\abs{Du}-\chi_D\}=0$ with fixed boundary values, $u=f$ on $\partial \Omega$, has a unique
solution. In addition, if the condition is not satisfied, the
uniqueness is lost.

\begin{theorem}\label{thm:unique1}
Suppose that $\overline{\inter D}=\overline D$.
Then the problem \eqref{bvp:grad_constraint} has a unique solution.
\end{theorem}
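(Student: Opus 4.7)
The plan is to prove uniqueness not by a direct sub/super comparison argument, but by identifying any viscosity solution with one explicitly constructed function, following the forthcoming characterization Theorem~\ref{thm:charac}. The strategy is to split $\Omega$ into the open pieces $\Omega\setminus\overline D$ and $\inter D$, observe that on each piece the PDE reduces to one whose uniqueness is already classical, and then use the density hypothesis $\overline{\inter D}=\overline D$ to couple the two problems rigidly across their shared boundary.

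First I would verify the reductions on each subregion. On $\Omega\setminus\overline D$ both $\chi_{\inter D}$ and $\chi_{\overline D}$ vanish, so the subsolution inequality forces $\Delta_\infty\varphi\ge 0$, while the supersolution inequality reads $\Delta_\infty\phi\le 0$ or $|D\phi|\le 0$; in the latter case $D\phi=0$ and $\Delta_\infty\phi=0$ trivially, so any solution $u$ is infinity harmonic on each component of $\Omega\setminus\overline D$, hence determined there by its boundary trace (Jensen \cite{J}). On $\inter D$, $\chi_{\inter D}\equiv\chi_{\overline D}\equiv 1$, so Definition~\ref{def:viscosol} collapses exactly to Jensen's equation~\eqref{eq:jensen} with $\eps=1$; by Jensen's uniqueness result and the patching identity Theorem~\ref{thm:jensen_is_patch}, $u|_{\inter D}$ is the patched extension of the infinity-harmonic solution of its trace on $\partial(\inter D)$.

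Next I would couple these two subproblems using the density hypothesis. The equality $\overline{\inter D}=\overline D$ implies that $\partial\overline D$ and $\partial(\inter D)$ coincide up to a set with empty relative interior in $\overline D$, so continuous traces on the two boundaries encode the same data. This rigidity allows one to build a single explicit candidate $\tilde u$ (for instance, the infinity-harmonic extension of $f$ patched on the components of $\{|Dh|<1\}$ that meet $\inter D$, iterating if a component of $\Omega\setminus\overline D$ inherits data from more than one patch) and to show that every solution $u$ must coincide with $\tilde u$ by matching traces on $\partial\overline D$ and applying the single-region uniqueness statements recalled above.

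The main obstacle I anticipate is the rigorous matching of traces on $\partial\overline D$. The asymmetry between the subsolution test on $\inter D$ (gradient lower bound $\ge 1$) and the supersolution test on $\overline D$ (gradient upper bound $\le 1$) has to be reconciled across the shared boundary, and this is precisely where the density hypothesis becomes indispensable. I would expect the argument to invoke the upper semicontinuity of the pointwise Lipschitz constant $L(u,x)$ already used in Section~2.2 in order to propagate the subsolution gradient lower bound from $\inter D$ to $\overline D$, and then to use the patching identity in Theorem~\ref{thm:jensen_is_patch} to identify the resulting function uniquely. Without $\overline{\inter D}=\overline D$, this propagation fails and one expects multiple solutions, in agreement with the examples promised later in the paper.
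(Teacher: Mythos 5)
Your local reductions are correct (on $\Om\setminus\overline D$ any solution is infinity harmonic; on $\inter D$ the equation is Jensen's equation with $\eps=1$), but the proof has a genuine gap exactly where you flag "the main obstacle": the coupling across $\partial\overline D$ is never actually carried out, and the route you sketch cannot work as stated. Each of your two subproblems is only determined by its boundary trace, and the trace of $u$ on $\partial(\inter D)$ and on $\partial(\Om\setminus\overline D)$ is itself an unknown of the problem; saying that $u|_{\inter D}$ is "the patched extension of the infinity-harmonic solution of \emph{its} trace" is circular, since two different solutions could a priori carry two different traces, each internally consistent. The density hypothesis $\overline{\inter D}=\overline D$ does not by itself rigidify the traces, and your proposed mechanism (upper semicontinuity of $L(u,x)$ to push the gradient lower bound from $\inter D$ to $\overline D$, plus an unspecified iteration over patches) gives no reason why the iteration should terminate or single out one candidate. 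Indeed, Lemma \ref{lem:empty_interior_gen_bdr_values} shows that when the coupling fails there really is a continuum of solutions, so any correct proof must contain a quantitative step that forces the trace.

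The paper breaks the circularity with two ingredients absent from your proposal. First, a \emph{global} sandwich: any solution satisfies $z\le u\le h$ in $\Om$, where $h$ solves $\Delta_\infty h=0$ and $z$ solves Jensen's equation on all of $\Om$ with data $f$; combined with Theorem \ref{thm:jensen_is_patch} (Jensen $=$ patch), which gives $z=h$ on $\Om\setminus\mathcal{A}$ with $\mathcal{A}=\{|Dh|<1\}$, this pins down $u=h=z$ outside $\mathcal{A}$ with no trace matching needed. Note that the relevant decomposition is therefore governed by $\mathcal{A}$ and $\mathcal{B}=\mathcal{A}\cap D$, not by $\inter D$ versus $\Om\setminus\overline D$. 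Second, on $\mathcal{B}$ the inequality $u\le z$ is forced by a steepest-ascent argument: if $u>z$ at some point of $\inter\mathcal{B}$, the subsolution property gives $|Du|\ge 1$ there, and the increasing slope estimate for $\Delta_\infty$-subsolutions (applied to a sup-convolution $u_\delta$ to handle nonsmoothness) produces a path along which $u_\delta$ grows at unit rate while $z$ is $1$-Lipschitz in $\mathcal{A}$, so $u_\delta-z$ cannot decrease before the path exits to $\partial\mathcal{A}_\delta$, where $u=z$ --- a contradiction. The hypothesis $\overline{\inter D}=\overline D$ enters only to guarantee that proving $u=z$ on $\inter\mathcal{B}$ suffices to get it on all of $\mathcal{B}$. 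If you want to salvage your outline, you should replace the trace-matching step by these two quantitative inputs.
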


To prove Theorem \ref{thm:unique1}, we show that a solution $u$ of
\eqref{bvp:grad_constraint} can be characterized
in the following way. Let $h\in C(\overline\Om)$ be the unique solution to
$\Delta_\infty h=0$ satisfying $h=f$ on $\partial\Om$,
and denote
\[
\mathcal{A}=\{x\in\Om\colon \abs{Dh(x)}<1\},\qquad \mathcal{B}=\mathcal{A}\cap D;
\]
recall that $h$ is everywhere differentiable, as proved in \cite{es}.
Let further $z\in C(\overline\Om)$ be the unique solution to the
Jensen's equation
\begin{equation}\label{eq:jensen1}
 \min\{\Delta_\infty z,\abs{Dz}-1\}=0,
\end{equation}
also satisfying $z=f$ on
$\partial\Om$. Then, if $\overline{\inter D}=\overline D$, we have
$u(x)=z(x)$ for all $x\in\mathcal{B}$ and
$\Delta_\infty u(x)=0$ in $\Om\setminus\overline{\mathcal{B}}$. But
the solution to $\Delta_\infty v=0$ in $\Om\setminus \ol {\mathcal B}$ with
the boundary values $z$ on $\partial \mathcal B$ and $f$ on
$\partial \Om$ is unique, and thus $u$ is unique.

\begin{theorem}\label{thm:charac} Suppose that $\overline{\inter D}=\overline D$.
Let $u\in C(\overline\Om)$ be a solution to \eqref{bvp:grad_constraint}.
Then $u(x)=z(x)$ for all $x\in\mathcal{B}$
and $\Delta_\infty u(x)=0$ in $\Om\setminus\overline{\mathcal{B}}$.
\end{theorem}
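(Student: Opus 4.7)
I will derive both conclusions from the upper bound $u \le h$ combined with the identification $u = h$ on $\Omega \setminus \mathcal{A}$; once this identification is available, the patched-solution representation $z = h_1$ of Theorem \ref{thm:jensen_is_patch} immediately gives $u = z$ on $\Omega \setminus \mathcal{A}$, and the remaining work is a careful bookkeeping of viscosity inequalities.

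The bound $u \le h$ on $\overline{\Omega}$ is immediate: the subsolution condition of \eqref{eq:grad_constraint} forces $\Delta_\infty u \ge 0$, so $u$ is $\infty$-subharmonic, and $u = h = f$ on $\partial \Omega$, whence Jensen's comparison for the infinity Laplacian applies. The main technical obstacle is to prove $u = h$ on $\Omega \setminus \mathcal{A}$. The plan here is to exploit that on $\Omega \setminus \mathcal{A} = \{|Dh| \ge 1\}$, $h$ itself satisfies both viscosity inequalities of \eqref{eq:grad_constraint} ($\Delta_\infty h = 0$ and $|Dh| \ge 1 \ge \chi_{\inter D}$), so the supersolution condition of $u$ together with the bound $u \le h$ should force equality. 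The hypothesis $\overline{\inter D} = \overline D$ enters through the upper semicontinuity of $x \mapsto L(h, x) = |Dh(x)|$, which allows one to transfer the lower gradient bound on $h$ from $\inter D$ to the closure $\overline D$.

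Granted this identification, the second assertion, $\Delta_\infty u = 0$ in $\Omega \setminus \overline{\mathcal{B}}$, is handled as follows. The subsolution inequality gives $\Delta_\infty u \ge 0$ on all of $\Omega$, so only $\Delta_\infty u \le 0$ in $\Omega \setminus \overline{\mathcal{B}}$ needs to be checked. If $x \in \Omega \setminus \overline D$, the supersolution condition with $\chi_{\overline D}(x) = 0$ directly yields $\Delta_\infty u(x) \le 0$. If $x \in \overline D \setminus \overline{\mathcal{B}}$, then a neighborhood $W$ of $x$ has $W \cap \mathcal{B} = \emptyset$, i.e.\ $W \cap D \subset \Omega \setminus \mathcal{A}$; since $x \in \overline D = \overline{\inter D}$, there is a sequence $y_n \in \inter D \cap W$ with $y_n \to x$, giving $|Dh(y_n)| \ge 1$, and upper semicontinuity of $L(h, \cdot)$ yields $|Dh(x)| \ge 1$, so $x \in \Omega \setminus \mathcal{A}$. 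The key step then gives $u(x) = h(x)$, hence any test function $\phi$ touching $u$ from below at $x$ also touches $h$ from below (as $\phi \le u \le h$ with equality at $x$), and the $\infty$-superharmonicity of $h$ forces $\Delta_\infty \phi(x) \le 0$.

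For the first assertion, $u = z$ on $\mathcal{B}$, I observe that $\mathcal{B} \subset \overline{\mathcal{A} \cap \inter D}$: any $x \in \mathcal{B} = \mathcal{A} \cap D \subset \mathcal{A} \cap \overline{\inter D}$ is approximated by $\inter D$-points that lie in the open set $\mathcal{A}$ eventually. By continuity of $u$ and $z$, it therefore suffices to prove $u = z$ on the open set $\mathcal{A} \cap \inter D$. On this set $\chi_{\inter D} = \chi_{\overline D} = 1$, so the sub- and supersolution conditions of \eqref{eq:grad_constraint} collapse to those of Jensen's equation $\min\{\Delta_\infty v, |Dv| - 1\} = 0$, making $u$ a full viscosity solution of Jensen's equation there; so is $z$. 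The boundary of $\mathcal{A} \cap \inter D$ in $\overline\Omega$ decomposes into pieces in $\partial\Omega$ (where $u = f = z$), in $\partial\mathcal{A} \subset \Omega \setminus \mathcal{A}$ (where $u = z$ by the key step), and in $\partial\inter D \cap \mathcal{A}$, which lies in $\overline{\mathcal{B}}$ and can be dealt with by a continuity/density argument from the interior equality. Jensen's uniqueness theorem for Jensen's equation then yields $u = z$ on $\mathcal{A} \cap \inter D$, and hence on $\mathcal{B}$.
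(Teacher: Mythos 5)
Your proposal has two genuine gaps, and they sit exactly where the real work of the theorem lies. First, the ``key step'' $u=h$ on $\Om\setminus\mathcal{A}$ is never actually proved: you assert that ``the supersolution condition of $u$ together with the bound $u\le h$ should force equality,'' but knowing $u\le h$ and that $u$ is a supersolution does not yield $u\ge h$ on the relatively closed set $\Om\setminus\mathcal{A}$ --- there is no comparison structure there, since on $\partial\mathcal{A}\cap\Om$ you have no a priori lower bound on $u$. The missing ingredient is the \emph{lower} bound $u\ge z$: since $\chi_{\ol D}\le 1$, every supersolution of \eqref{bvp:grad_constraint} is a supersolution of Jensen's equation \eqref{eq:jensen1}, so Jensen's comparison gives $u\ge z$ in $\Om$; combined with $u\le h$ and with $z=h$ on $\Om\setminus\mathcal{A}$ (which is exactly what Theorem \ref{thm:jensen_is_patch} provides), the sandwich $z\le u\le h$ forces $u=h=z$ there. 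This is how the paper closes the step, and your proposal never establishes $u\ge z$ at all.

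Second, and more seriously, your argument for $u=z$ on $\mathcal{B}$ is circular. You reduce to proving $u=z$ on the open set $\mathcal{A}\cap\inter D$, where both functions solve Jensen's equation, and you invoke Jensen's uniqueness. But comparison on that subdomain requires knowing $u=z$ (or at least $u\le z$) on \emph{all} of its boundary, and on the portion $\partial(\inter D)\cap\mathcal{A}$ --- which is nonempty in general, e.g.\ when $D$ is a ball contained in $\mathcal{A}$ --- you propose to get the boundary identification ``by a continuity/density argument from the interior equality,'' i.e.\ from the very statement you are trying to prove. This cannot be repaired by a better choice of subdomain: there is no open set on which both $u$ and $z$ solve Jensen's equation and whose entire boundary lies where $u=z$ is already known. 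The paper's proof of $u\le z$ on $\mathcal{B}$ instead runs a contradiction argument on the sup-convolution $u_\delta$, which inherits $\Delta_\infty u_\delta\ge 0$ and $|Du_\delta|\ge 1$ on $(\inter\mathcal{B})_\delta$, and uses the increasing slope estimate to build a chain of points along which $u_\delta$ grows at unit rate until it exits to $\partial\mathcal{A}_\delta$, where the $1$-Lipschitz bound on $z$ in $\mathcal{A}$ forces $u_\delta-z$ to exceed its boundary supremum --- a contradiction. Some substitute for this propagation argument is indispensable; your proposal contains none. (Your treatment of the second assertion, showing that points of $\ol D\setminus\ol{\mathcal{B}}$ lie in $\Om\setminus\mathcal{A}$ via upper semicontinuity of $L(h,\cdot)$ and then using the touching argument against $h$, is correct modulo the key step, and is in fact slightly more careful than the paper's one-line deduction.)
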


\begin{proof}
Observe first that since $u$ is a supersolution of the Jensen's
equation \eqref{eq:jensen1} and $u=z$ on $\partial\Om$, we have
$u\ge z$ in $\Om$. On the other hand, since $\Delta_\infty u\ge 0$
and $u=h$ on $\partial\Om$, we have $u\le h$ in $\Om$. Thus
\[
z(x)\le u(x) \le h(x)\quad\text{for all $x\in\Om$}.
\]
Next we recall Theorem \ref{thm:jensen_is_patch}, which implies
that $z(x)=h(x)$ in $\Om\setminus\mathcal{A}$.
This implies that
$$
u(x)=h(x)=z(x)\qquad \trm{in} \ \Om\setminus\mathcal{A},
$$
and,
in particular, that $\Delta_\infty u(x)=0$ in $\Om\setminus\overline{\mathcal{A}}$.
Moreover, as $\Delta_\infty u=0$ in $\Om\setminus\overline D$
by the fact that it satisfies \eqref{bvp:grad_constraint}, we have
$$\Delta_\infty u(x)=0\qquad \trm{in}\ \Om\setminus\overline{\mathcal{B}}.$$

To prove that $u=z$ in $\mathcal{B}$ we argue by contradiction and
suppose that there is $\hat x\in\inter\mathcal{B}$ such that
$u(\hat x)-z(\hat x)>0$. If $u$ were smooth, we would have $\abs{Du(\hat
x)}\ge 1$ by the second part of the equation, and from
$\Delta_\infty u\ge 0$ it would follow that $t\mapsto \abs{Du(\gamma(t))}$
is non-decreasing along the curve $\gamma$ for which $\gamma(0)=\hat x$
and $\dot{\gamma}(t)=Du(\gamma(t))$. Using this information and the fact that $\abs{z(x)-z(y)}\le
\abs{x-y}$ in $\mathcal A$, we could then follow $\gamma$ to $\partial\mathcal{A}$
to find a point $y$ where $u(y)>z(y)$; but this is a contradiction since
$u$ and $z$ coincide on $\partial \mathcal A$.

To overcome the fact that $u$ need not be smooth and to make the formal steps outlined above
rigorous, let $\delta>0$ and
\[
u_\delta(x)=\sup_{y\in\Om} \left\{
u(y)-\frac1{2\delta}\abs{x-y}^2\right\}
\]
be the standard sup-convolution of $u$. Observe that since $u$ is bounded in $\Om$, we in fact have
\[
u_\delta(x)=\sup_{y\in B_{R(\delta)}(x)} \left\{
u(y)-\frac1{2\delta}\abs{x-y}^2\right\}
\]
with $R(\delta)=2\sqrt{\delta\norm{u}_{L^\infty(\Omega)}}$. We
assume that $\delta>0$ is so small that
\begin{enumerate}
\item $\hat x\in (\inter\mathcal{B})_\delta := \{x\in
\inter\mathcal{B}\colon \dist(x,\partial\mathcal{B})>R(\delta)\}$; and
\item for $\mathcal{A}_\delta:=\{x\in \mathcal{A}\colon \dist(x,\partial\mathcal{A})>R(\delta)\}$, it holds
\[\sup\limits_{x\in (\inter\mathcal{B})_\delta} (u_\delta-z) >
\sup\limits_{x\in \partial\mathcal{A}_\delta} (u_\delta-z).\]
\end{enumerate}
Regarding the second condition, recall that $u_\delta\to u$
locally uniformly when $\delta\to 0$, and that $u=z$ on
$\partial\mathcal{A}$.

Next we observe that since $u$ is a solution to
\eqref{bvp:grad_constraint}, it follows that $\Delta_\infty
u_\delta\ge 0$ and $|Du_\delta|-\chi_{(\inter D)_\delta}\ge 0$ in
$\Om_\delta$;
 see e.g.\
\cite{jls}. In particular, since
$u_\delta$ is semiconvex and thus twice differentiable a.e., there
exists $x_0\in (\inter\mathcal{B})_\delta$ such that
$$u_\delta(x_0)-z(x_0) > \sup\limits_{x\in
\partial\mathcal{A}_\delta} (u_\delta-z),$$
$u_\delta$ is (twice)
differentiable at $x_0$, and
$$|Du_\delta(x_0)|=L(u_\delta,x_0)\ge
1.
$$
 Now let $r_0=\frac12\dist(x_0,\partial\mathcal{A}_\delta)$ and
let $x_1\in \partial B_{r_0}(x_0)$ be a point such that
\[
\max_{y\in \overline B_{r_0}(x_0)} u_\delta(y)=u_\delta(x_1).
\]
Since $\Delta_\infty u_\delta\ge 0$, the increasing slope estimate, see \cite{ceg}, implies
\[
1\le L(u_\delta,x_0) \le L(u_\delta,x_1)\quad \text{and}\quad u_\delta(x_1)
\ge u_\delta(x_0)+\abs{x_0-x_1}.
\]
By defining $r_1=\frac12\dist(x_1,\partial\mathcal{A}_\delta)$,
choosing $x_2\in \partial B_{r_1}(x_1)$ so that
$$\max\limits_{y\in \overline B_{r_1}(x_1)}
u_\delta(y)=u_\delta(x_2),$$ and using the increasing slope
estimate again yields
\[
1\le L(u_\delta,x_0) \le L(u_\delta,x_1)\le L(u_\delta,x_2)
\]
and
\[
u_\delta(x_2) \ge  u_\delta(x_1)+\abs{x_1-x_2} \ge u_\delta(x_0)+\abs{x_0-x_1}+\abs{x_1-x_2}.
\]
Repeating this construction gives a sequence $(x_k)$ such that
$x_k\to a\in \partial\mathcal{A}_\delta$
as $k\to\infty$ and
\[
u_\delta(x_k)\ge u_\delta(x_0)+\sum_{j=0}^{k-1} \abs{x_j-x_{j+1}} \quad\text{for $k=1,2,\ldots$}
\]
On the other hand, since $\abs{z(x)-z(y)}\le \abs{x-y}$ whenever the line segment
$[x,y]$ is contained in $\mathcal{A}$ (see \cite{cgw}),
we have
\[
z(x_k) \le z(x_0)+\sum_{j=0}^{k-1} \abs{x_j-x_{j+1}}.
\]
Thus, by continuity,
\[
u_\delta(a)-z(a) =\lim_{k\to\infty} u_\delta(x_k)-z(x_k) \ge u_\delta(x_0)-z(x_0) >
\sup\limits_{x\in \partial\mathcal{A}_\delta} (u_\delta-z).
\]
But this is impossible because $a\in \partial\mathcal{A}_\delta$. Hence the theorem is proved.
\end{proof}

\begin{remark}\label{rem:better_uniqueness}
The proof of Theorem \ref{thm:charac} shows that the uniqueness
for \eqref{bvp:grad_constraint} in fact holds under the weaker (but less explicit)
assumption
\[
\overline{\inter {\mathcal B}}=\overline {\mathcal B}.
\]
\end{remark}

\begin{remark}\label{rem:coincidence}
Under the assumption $\overline{\inter D}=\overline D$, we have that the unique solution
$u\in C(\overline\Om)$ to \eqref{bvp:grad_constraint} satisfies $u(x)=z(x)$ for all $x\in\overline D$.
This follows from the fact that for $x\in \Om\setminus\mathcal{A}$, $z(x)=h(x)$
by Theorem \ref{thm:jensen_is_patch}.
\end{remark}

In addition to uniqueness, we also have a comparison principle for
the equation $\min\{\Delta_\infty u, \abs{Du}-\chi_D\}=0$.

\begin{theorem}\label{thm:comparison1}
Suppose that $\overline{\inter D}=\overline D$. Then if $v_1$ is a subsolution and $v_2$ is a supersolution to
\eqref{bvp:grad_constraint}, we have $v_1\le v_2$.
\end{theorem}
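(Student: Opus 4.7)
Interpreting \eqref{bvp:grad_constraint} as a boundary value problem, take $v_1,v_2\in C(\overline\Om)$ with $v_1\le f\le v_2$ on $\partial\Om$. The plan is to introduce the unique viscosity solution $u$ of \eqref{bvp:grad_constraint} with boundary data $f$ (which exists by Theorem \ref{thm:gen_existence} and is unique by Theorem \ref{thm:unique1}), together with the infinity harmonic function $h$ and the Jensen solution $z$ with the same boundary data, and then sandwich $v_1\le u\le v_2$. Setting $\mathcal A=\{|Dh|<1\}$ and $\mathcal B=\mathcal A\cap D$ as in Theorem \ref{thm:charac}, its proof provides
\[
u=z \text{ on } \mathcal B,\quad u=h=z \text{ on } \Om\setminus\mathcal A, \quad \Delta_\infty u=0 \text{ on } \Om\setminus\overline{\mathcal B},
\]
and these identities form the backbone of the argument. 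The main obstacle will be the inequality $v_1\le z$ on $\mathcal B$, for which we rerun the sup-convolution walk from Theorem \ref{thm:charac} with $v_1$ in place of $u$.

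\textbf{Proving $v_1\le u$.} Since $v_1$ is a viscosity subsolution of $\Delta_\infty w=0$ with $v_1\le f$ on $\partial\Om$, Jensen's classical comparison for the infinity Laplacian gives $v_1\le h$ on $\Om$, hence $v_1\le u$ on $\Om\setminus\mathcal A$. To obtain $v_1\le z$ on $\mathcal B$ we argue by contradiction: if $v_1(\hat x)>z(\hat x)$ for some $\hat x\in\inter\mathcal B$, then the sup-convolution $v_1^\delta$ inherits both $\Delta_\infty v_1^\delta\ge 0$ on $\Om_\delta$ and $|Dv_1^\delta|\ge 1$ on $(\inter D)_\delta$; starting from a near-maximizer $x_0\in(\inter\mathcal B)_\delta$ of $v_1^\delta-z$ at which $v_1^\delta$ is differentiable (so $L(v_1^\delta,x_0)\ge 1$), the increasing-slope walk of Theorem \ref{thm:charac} yields a sequence $(x_k)\to a\in\partial\mathcal A_\delta$ with
\[
v_1^\delta(a)-z(a)\ge v_1^\delta(x_0)-z(x_0).
\]
Since $v_1^\delta\to v_1$ uniformly on compacts and $v_1\le h=z$ on $\partial\mathcal A$, for $\delta$ small the right-hand side exceeds $\sup_{\partial\mathcal A_\delta}(v_1^\delta-z)$, contradicting $a\in\partial\mathcal A_\delta$. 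Thus $v_1\le u$ on $\overline{\mathcal B}\cup(\Om\setminus\mathcal A)$, which in particular controls $v_1$ on the $\Om$-boundary of $\Om\setminus\overline{\mathcal B}$; since $u$ is infinity harmonic on $\Om\setminus\overline{\mathcal B}$ and $v_1$ is still an infinity subsolution there, one more application of infinity Laplace comparison extends $v_1\le u$ to all of $\Om$.

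\textbf{Proving $u\le v_2$.} Since $\chi_{\overline D}\le 1$, $v_2$ is a viscosity supersolution of Jensen's equation $\min\{\Delta_\infty w,|Dw|-1\}=0$ with $v_2\ge f$ on $\partial\Om$, so Jensen's comparison yields $v_2\ge z$ on $\Om$; in particular $v_2\ge u$ on $\mathcal B\cup(\Om\setminus\mathcal A)$. For the remaining piece $\mathcal A\setminus\overline{\mathcal B}$ the hypothesis $\overline D=\overline{\inter D}$ is essential: a point $x\in\mathcal A\setminus\overline{\mathcal B}$ has an open $\mathcal A$-neighbourhood disjoint from $\mathcal A\cap D$, hence from $D$, and if this neighbourhood met $\overline D$ it would have to meet $\inter D\subset D$ by the hypothesis, which is impossible. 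Thus $\mathcal A\setminus\overline{\mathcal B}\subset\mathcal A\setminus\overline D$, where $\chi_{\overline D}\equiv 0$ makes $v_2$ a supersolution of $\Delta_\infty w=0$. Since $u$ is infinity harmonic on this set and $v_2\ge u$ on its boundary (by continuity, using $u=h=z$ on $\partial\mathcal A$, $u=z\le v_2$ on $\overline{\mathcal B}$, and $v_2\ge f=u$ on $\partial\Om$), the infinity Laplace comparison concludes $v_2\ge u$ on $\mathcal A\setminus\overline{\mathcal B}$, completing the proof.
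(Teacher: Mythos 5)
Your proof is correct and follows essentially the same route as the paper: compare each of $v_1,v_2$ with the unique solution $u$, using Jensen's comparison to get $v_2\ge z$ and $v_1\le h$, rerunning the sup-convolution/increasing-slope argument of Theorem \ref{thm:charac} (which only uses the subsolution property) to get $v_1\le z$ on $\mathcal B$, and finishing with infinity-Laplacian comparison on the complementary regions. The only cosmetic difference is that for $u\le v_2$ the paper splits $\Om$ into $\overline D$ and $\Om\setminus\overline D$ via Remark \ref{rem:coincidence} ($u=z$ on all of $\overline D$), whereas you split along $\mathcal B$, $\Om\setminus\mathcal A$ and $\mathcal A\setminus\overline{\mathcal B}$; both work, and your inclusion $\mathcal A\setminus\overline{\mathcal B}\subset\mathcal A\setminus\overline D$ in fact holds for purely topological reasons without invoking $\overline D=\overline{\inter D}$.
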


\begin{proof}
Let $u$ be the unique solution to  \eqref{bvp:grad_constraint}. We will show that $v_1\le u$ and $u\le v_2$
in $\Om$.

Since $v_2$ is a supersolution to Jensen's equation
\eqref{eq:jensen1}, we have by Jensen's comparison theorem
\cite{J} that $v_2\ge z$ in $\Om$. In particular, owing to
Remark~\ref{rem:coincidence}, $v_2\ge u=z$ in $\overline D$. On
the other hand, in $\Om\setminus\overline D$ we have
$\Delta_\infty u=0$ by Theorem \ref{thm:charac} and $\Delta_\infty
v_2\le 0$, and so $v_2\ge u$ in $\Om\setminus\overline D$ as well.

As for the other inequality $u\ge v_1$, we notice first that it
suffices to prove that $u=z\ge v_1$ in $\mathcal{B}$. Indeed,
since $\Delta_\infty u=0$ in $\Om\setminus\mathcal B$ again by
Theorem~\ref{thm:charac} and  $\Delta_\infty v_1\ge 0$ in $\Om$,
it follows that
 $u\ge v_1$ in $\mathcal{B}$ implies  $u\ge v_1$ in $\Om$.

To prove $u=z\ge v_1$ in $\mathcal{B}$, we simply observe that we can repeat
the argument used in the second
part of the proof of Theorem \ref{thm:charac}, which proved that $u\le z$ in $\mathcal{B}$, as this
argument only used the fact that $u$ was a subsolution to \eqref{bvp:grad_constraint}.
\end{proof}




\subsection{Non-uniqueness}

In this section, we discuss various situations where there are more than one solution to
\eqref{bvp:grad_constraint}. For convenience of exposition, we assume that $\lip(f)\le 1$,
which implies that the infinity harmonic extension $h$ of $f$
satisfies $\norm{Dh}_{L^\infty(\Om)}\le 1$. This simplifies the
notation as we have
\[
D=D\cap\{\abs{Dh}\le 1\}.
\]

Let us first establish the non-uniqueness in the easy case $\inter D=\emptyset$.




\begin{lemma}\label{lem:empty_interior_gen_bdr_values}
Suppose that $\inter D=\emptyset$, and $f$ is Lipschitz continuous
with constant $L\le 1$. Let $v_\alpha\in C(\overline\Om)$ be the
unique function satisfying
\begin{equation}
\label{eq:least-solution}
 \begin{cases}
  \Delta_\infty v_\alpha=0&\text{in $\Om\setminus\overline D$}\\
v_\alpha = f&\text{on $\partial\Om$}\\
v_\alpha(x)=\sup\limits_{y\in \partial \Om}\Big(f(y)-\alpha
\abs{x-y}\Big)&\text{for $x\in\overline D$}.
 \end{cases}
\end{equation}
Then $v_\alpha$ is a solution to \eqref{bvp:grad_constraint} for
every $\alpha\in[L,1]$.
\end{lemma}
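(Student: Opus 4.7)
My plan is as follows. Introduce the auxiliary function
\[
w(x) := \sup_{y\in\partial\Omega}\bigl(f(y)-\alpha\abs{x-y}\bigr)
\]
on all of $\overline\Omega$; by construction $v_\alpha = w$ on $\overline D$. Since $\alpha\ge L=\lip(f)$, the function $w$ is the McShane extension of $f$, so $w\equiv f$ on $\partial\Omega$ and $w$ is globally $\alpha$-Lipschitz on $\overline\Omega$. Moreover each cone $x\mapsto f(y)-\alpha\abs{x-y}$ with $y\in\partial\Omega$ is infinity harmonic on $\Omega$ (since $y\notin\Omega$), so $w$, being a pointwise supremum of infinity harmonic functions, is a viscosity subsolution of $\Delta_\infty w=0$ in $\Omega$.

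To verify that $v_\alpha$ is well defined and continuous, I would note that the Dirichlet data for the infinity Laplace problem on $\Omega\setminus\overline D$ is $f$ on $\partial\Omega$ and $w|_{\partial D}$ on $\partial D$; these two pieces agree on $\partial\Omega\cap\overline D$ (because $w=f$ on $\partial\Omega$) and the combined datum is $\alpha$-Lipschitz on $\partial(\Omega\setminus\overline D)$ by the triangle inequality together with $\alpha\ge L$. Jensen's existence and uniqueness theorem for infinity harmonic functions with continuous boundary data then delivers $v_\alpha\in C(\overline\Omega)$.

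Next I would establish two auxiliary properties. First, $v_\alpha$ is globally $\alpha$-Lipschitz on $\overline\Omega$: the infinity harmonic piece on $\Omega\setminus\overline D$ inherits the Lipschitz constant $\alpha$ of its boundary data, the piece on $\overline D$ is $\alpha$-Lipschitz by definition, and the two pieces glue across $\partial D$ where they agree (a segment joining a point of $\overline D$ to a point of $\Omega\setminus\overline D$ can be split at its crossing with $\partial D$). Second, $v_\alpha\ge w$ throughout $\overline\Omega$: on $\overline D$ we have equality; on $\Omega\setminus\overline D$, $v_\alpha$ is an infinity supersolution, $w$ is an infinity subsolution, and on $\partial(\Omega\setminus\overline D)$ we have $v_\alpha=w$ on $\partial D$ and $v_\alpha=f\ge w$ on $\partial\Omega$, so Jensen's comparison theorem concludes.

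With these tools the viscosity verification is short. Because $\inter D=\emptyset$ the subsolution inequality reduces to $\Delta_\infty\varphi(x)\ge 0$ (the gradient part $\abs{D\varphi}\ge\chi_{\inter D}=0$ being trivial): on $\Omega\setminus\overline D$ this follows from $\Delta_\infty v_\alpha=0$, while at $x\in\overline D$ any $\varphi$ touching $v_\alpha$ from above also touches $w$ from above (by $v_\alpha\ge w$ with equality at $x$), and the infinity subharmonicity of $w$ does the rest. For the supersolution side, on $\Omega\setminus\overline D$ we have $\chi_{\overline D}(x)=0$ and $\Delta_\infty v_\alpha=0$, so the minimum is $\le 0$; at $x\in\overline D$ the global $\alpha$-Lipschitz bound forces any $\phi$ touching from below to satisfy $\abs{D\phi(x)}\le\alpha\le 1$, hence $\abs{D\phi(x)}-\chi_{\overline D}(x)\le 0$. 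The step I expect to be most delicate is the comparison $v_\alpha\ge w$: it requires applying Jensen's comparison principle in the open set $\Omega\setminus\overline D$ whose boundary has an interior component $\partial D$, and checking that the boundary inequality is literally attained in the limit as we approach $\partial D$ from the infinity harmonic side.
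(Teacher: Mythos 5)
Your proposal is correct and follows essentially the same route as the paper: both use that the cone-envelope $w$ (the paper's $g$) is an infinity subsolution, compare to get $v_\alpha\ge w$ so that test functions touching $v_\alpha$ from above on $\overline D$ also touch $w$, and dispose of the supersolution condition via the global bound $\lip(v_\alpha,\overline\Omega)=\alpha\le 1$. The only cosmetic difference is that you verify the Lipschitz bound by an explicit gluing argument across $\partial D$, whereas the paper simply invokes the absolutely minimizing Lipschitz extension property of the infinity harmonic piece.
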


\begin{proof}
 We show
first that $v_\alpha$ is a subsolution to
 $\min\{\Delta_\infty u, \abs{Du}-\chi_D\}=0$. To this end, as $\inter D=\emptyset$,
we only have to verify that
$$\Delta_\infty v_\alpha\ge 0 \qquad \trm{in}\
\Om.$$ This is clearly true in $\Om\setminus \overline D$, so let
us suppose that $\hat x \in\Om$ and $\varphi\in C^2(\Om)$ are such
that $v_\alpha-\varphi$ has a strict local maximum at $\hat x\in
\overline D$. Since for each $y\in\partial\Om$,
\[
x\mapsto f(y)-\alpha \abs{x-y}
\]
is a viscosity subsolution of the infinity Laplace equation,
it follows that
$$g(x)= \sup_{y\in \partial
\Om}\Big(f(y)-\alpha \abs{x-y}\Big)$$ is a subsolution as well.
By the comparison principle, this implies that $v_\alpha(x)\ge
g(x)$ for all $x\in\Om$. In particular, since $v_\alpha(\hat x)=
g(\hat x)$, this means that also $g-\varphi$ has a strict local
maximum at $\hat x$. As $g$ is a subsolution, this implies
$\Delta_\infty
\varphi(\hat x)\ge 0$, as desired.

To show that $v_\alpha$ is a supersolution it suffices to prove
that $\abs{D v_\alpha}-1\le 0$ in $D$. But this is evident from
the fact that $\lip(v_\alpha,\overline\Om)=\alpha\le 1$,
which holds because $v_\alpha$ is the absolutely minimizing
Lipschitz extension of its boundary values to $\Om\setminus
\overline D$, see e.g.\ \cite{ceg}, and $L\le \alpha$.
\end{proof}

\begin{remark} In the special case of zero boundary values $f\equiv 0$,
Lemma \ref{lem:empty_interior_gen_bdr_values}
says that $v_\alpha$, the unique function that satisfies
\begin{equation}
\label{eq:least-solution.22}
 \begin{cases}
  \Delta_\infty v_\alpha=0&\text{in $\Om\setminus\overline D$}\\
v_\alpha(x)=0&\text{on $\partial\Om$}\\
v_\alpha(x)=-\alpha\dist(x,\partial\Om)&\text{for $x\in\overline D$},
 \end{cases}
\end{equation}
is a solution to \eqref{bvp:grad_constraint} for every $\alpha\in[0,1]$.

Note also that when $\inter (D) = \emptyset$ and $\overline{D}$
contains more than a single point, there are more solutions  to
\eqref{bvp:grad_constraint} than just the ones described in Lemma
\ref{lem:empty_interior_gen_bdr_values}. In fact, let us take any
subset $A
\subset \overline{D}$ and let $z$ be the solution to
\begin{equation}
 \begin{cases}
  \Delta_\infty z=0&\text{in $\Om\setminus A$}\\
z=0&\text{on $\partial\Om$}\\
z=-\dist(x,\partial\Om)&\text{for $x\in A$}.
 \end{cases}
\end{equation}
Then $\alpha z$ is also a solution to \eqref{bvp:grad_constraint},
with $f=0$, for every $\alpha\in[0,1]$.
\end{remark}

Next we will show that the uniqueness question
in the general case reduces to the uniqueness for
\eqref{bvp:no_interior} (see below), where $D\setminus \ol{\inter D}$ has empty interior.

If  $\inter D\ne\emptyset$, then   $\overline{\inter D}$ satisfies
the condition of Theorem \ref{thm:unique1} for uniqueness. That
is, $\overline{\inter (\overline{\inter
D})}=\overline{\overline{\inter D}}$, and thus there exists a
unique solution $u_0$ to
\begin{equation}\label{bvp_prime}
\begin{cases}
 \min\{\Delta_\infty u , \abs{Du}-\chi_{\overline{\inter D}}\}=0&\text{in $\Om$}\\
u=f&\text{on $\partial\Om$}.
\end{cases}
\end{equation}
Define $f_0\in C(\partial(\Om\setminus \overline{\inter D}))$ by setting
$f_0(x)=f(x)$, if $x\in\partial\Om$, and $f_0(x)=u_0(x)$, if $x\in\partial (\overline{\inter D})$.

\begin{lemma}\label{lem:reduction}
Suppose that $\overline{\inter D}\ne \overline D$ and $\inter D\ne\emptyset$.
Let $v$ be any solution to
\begin{equation}\label{bvp:no_interior}
\begin{cases}
 \min\{\Delta_\infty v, \abs{Dv}-\chi_{D\setminus \overline{\inter D}}\}=0&
 \text{in $\Om\setminus \overline{\inter D}$}\\
v=f_0&\text{on $\partial (\Om\setminus \overline{\inter D})$},
\end{cases}
\end{equation}
and define $w\colon\Om\to\R$ by
\[
w(x)=
\begin{cases}
 v(x),&\quad\text{if $x\in\Om\setminus \overline{\inter D}$}\\
u_0(x),&\quad\text{if $x\in \overline{\inter D}$}.
\end{cases}
\]
Then $w$ is a solution to
\[
\begin{cases}
 \min\{\Delta_\infty w, \abs{Dw}-\chi_{D}\}=0&\text{in $\Om$}\\
w=f&\text{on $\partial\Om$}.
\end{cases}
\]
\end{lemma}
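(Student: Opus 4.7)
First, I would verify that $w$ is continuous on $\Om$ and satisfies $w=f$ on $\partial\Om$. Continuity on the open sets $\Om\setminus\overline{\inter D}$ and $\inter(\overline{\inter D})$ is clear; across $\partial(\overline{\inter D})\cap\Om$ it follows from the construction of $f_0$, which forces $v=f_0=u_0$ on this shared boundary. The identity $w=f$ on $\partial\Om$ is inherited from $v$.

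The viscosity inequalities at a test point $\hat x$ then split according to whether $\hat x$ lies in $\Om\setminus\overline{\inter D}$, in $\inter(\overline{\inter D})$, or on $\partial(\overline{\inter D})\cap\Om$. The two open cases are essentially mechanical. On $\Om\setminus\overline{\inter D}$, the inclusion $\inter D\subseteq \inter(\overline{\inter D})$ forces $\chi_{\inter D}=0$ there, so the gradient part of the subsolution condition is vacuous, the $\Delta_\infty$-subsolution inequality transfers directly from $v$, and the supersolution property follows because $\chi_{\overline{D\setminus\overline{\inter D}}}\leq\chi_{\overline D}$ makes $v$'s supersolution constraint stronger than what is needed. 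On $\inter(\overline{\inter D})$, the subsolution property of $u_0$ yields $\Delta_\infty\varphi\geq 0$ and $|D\varphi|\geq\chi_{\inter(\overline{\inter D})}=1\geq\chi_{\inter D}$, while the supersolution constraint $\chi_{\overline{\inter D}}$ coincides with $\chi_{\overline D}$ (both equal $1$ on this set).

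The real content lies on $\partial(\overline{\inter D})\cap\Om$. For the subsolution check, note that $\chi_{\inter D}(\hat x)=0$ since $\inter D\subseteq\inter(\overline{\inter D})$, so only $\Delta_\infty\varphi(\hat x)\geq 0$ remains to be shown. I would handle this by a standard perturbation argument: $w$ is a continuous viscosity subsolution of $\Delta_\infty u\geq 0$ on the open dense set $\Om\setminus\partial(\overline{\inter D})$, and adding a small linear perturbation $\xi\cdot(y-\hat x)$ to $\varphi$ shifts the contact point of $w-\varphi$ off $\partial(\overline{\inter D})$ for almost every small $\xi$; passing to the limit $\xi\to 0$ then recovers $\Delta_\infty\varphi(\hat x)\geq 0$ via the continuity of $\Delta_\infty\varphi$.

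For the supersolution check at $\hat x\in\partial(\overline{\inter D})\cap\Om$, my plan exploits the comparison $v\leq u_0$ on $\Om\setminus\overline{\inter D}$, which follows from Jensen's comparison principle: $v$ is a subsolution of $\Delta_\infty u\geq 0$ there and $u_0$ is infinity-harmonic on $\Om\setminus\overline{\inter D}$ with identical boundary values on $\partial(\Om\setminus\overline{\inter D})$. Hence $w\leq u_0$ on $\Om$ with equality at $\hat x$, so any $\phi\in C^2$ touching $w$ from below at $\hat x$ also touches $u_0$ from below at $\hat x$; after subtracting $\varepsilon|y-\hat x|^2$ to restore strict touching, the supersolution property of $u_0$ at $\hat x\in\overline{\inter D}$ (where its constraint equals $1=\chi_{\overline D}(\hat x)$) yields $\min\{\Delta_\infty\phi(\hat x),|D\phi(\hat x)|-1\}\leq 0$ in the limit $\varepsilon\to 0$, using that $D\phi_\varepsilon(\hat x)=D\phi(\hat x)$ and $\Delta_\infty\phi_\varepsilon(\hat x)\to\Delta_\infty\phi(\hat x)$. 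The main obstacle is thus the subsolution transfer across $\partial(\overline{\inter D})$; the supersolution case is notably cleaner thanks to the one-sided comparison $w\leq u_0$.
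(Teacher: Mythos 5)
Your decomposition into the two open regions and the interface is right, and your supersolution argument at $\partial(\overline{\inter D})\cap\Om$ via the one-sided comparison $v\le u_0$ is exactly what the paper does. But the subsolution transfer across the interface --- the step you yourself identify as ``the real content'' --- has a genuine gap. The claim that a small linear perturbation $\xi\cdot(y-\hat x)$ moves the contact point of $w-\varphi$ off $\partial(\overline{\inter D})$ for a.e.\ small $\xi$ is false: a concave kink of $w$ along the interface traps the maximum for all sufficiently small $\xi$. A one-dimensional caricature: $w(y)=-|y|$ is a viscosity solution of $\Delta_\infty w=0$ on $(-1,0)\cup(0,1)$, yet $\varphi(y)=y/2-My^2$ touches $w$ from above at $0$ with $\Delta_\infty\varphi(0)=-M/2<0$, and $w-\varphi-\xi y$ keeps its maximum at $0$ for every $|\xi|<1/2$. (Jensen-type ``a.e.\ $\xi$'' lemmas give contact points of twice differentiability for semiconvex functions; they do not let you avoid a prescribed closed set, and $w$ is not known to be semiconvex.) Worse, your own supersolution argument shows $w\le u_0$ with equality on $\overline{\inter D}$, which is precisely the configuration that produces a downward kink of $w$ on the $v$-side of the interface --- so the obstruction is not hypothetical.

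The missing idea is a \emph{lower} barrier that is a global subsolution of $\Delta_\infty u\ge 0$ and touches $w$ on the interface. The paper uses the solution $z$ of Jensen's equation \eqref{eq:jensen1}: by Theorem \ref{thm:charac} and Remark \ref{rem:coincidence} applied to \eqref{bvp_prime} (whose set $\overline{\inter D}$ is regular), $u_0=z$ on $\overline{\inter D}$ and $u_0\ge z$ in $\Om$; moreover $v\ge z$ in $\Om\setminus\overline{\inter D}$, since $v$ is a supersolution of Jensen's equation there (as $\chi_{\overline{D\setminus\overline{\inter D}}}\le 1$) with boundary data $f_0\ge z$. Hence $w\ge z$ in $\Om$ with $w(\hat x)=z(\hat x)$ at $\hat x\in\partial(\overline{\inter D})$, so any $\varphi$ touching $w$ from above at $\hat x$ also touches $z$ from above, and $\Delta_\infty\varphi(\hat x)\ge 0$ follows from the subsolution property of $z$. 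With that replacement your proof matches the paper's; the rest of your case analysis (including the observation that $\chi_{\inter D}$ vanishes on the interface, so only the $\Delta_\infty$ inequality is at stake there) is correct.
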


\begin{proof}
Let us first show that $w$ is a supersolution. Let $\phi\in
C^2(\Om)$ be a function touching $w$ from below at $x\in\Om$.
If $x\in{\Om\setminus\ol{\inter D}}$, then, as $w=v$ in ${\Om\setminus\ol{\inter D}}$, we have
\[
\min\{\Delta_\infty \phi(x), \abs{D\phi(x)}-\chi_{\overline {D\setminus \ol{\inter D}}}(x)\}\le 0.
\]
But then also
\[
\min\{\Delta_\infty \phi(x), \abs{D\phi(x)}-\chi_{\overline {D}}(x)\}\le 0,
\]
as desired. On the other hand, if $x\notin{\Om\setminus\ol{\inter D}}$, then $x\in {\ol{\inter D}}$,
and we have $w(x)=u_0(x)$. Moreover, as $u_0$ is a solution and $v$ a subsolution to $\Delta_\infty h=0$ in
$\Om\setminus \ol{\inter D}$, we have $v\le u_0$ in $\Om\setminus \ol{\inter D}$ by comparison principle.
Thus $w\le u_0$ in $\Om$ and $\phi$ touches $u_0$ from below at
$x$, and thus
\[
\min\{\Delta_\infty \phi(x), \abs{D\phi(x)}-\chi_{\overline {{\inter D}}}(x)\}\le 0.
\]
This clearly implies
\[
\min\{\Delta_\infty \phi(x), \abs{D\phi(x)}-\chi_{\overline {D}}(x)\}\le 0,
\]
and we have shown that $w$ is a supersolution.

To check that $w$ is also a subsolution, we fix a function
$\varphi\in C^2(\Om)$ touching $w$ from above at $x\in\Om$.
We want to show that
\[
\Delta_\infty \varphi(x)\ge 0\quad\text{and}\quad
\abs{D\varphi(x)}\ge \chi_{\inter {D}}(x).
\]
First, if $x\in\inter D=\inter(\ol{\inter D})$, then $w=u_0$ in a neighborhood of $x$, and thus
\[
\min\{\Delta_\infty \varphi(x), \abs{D\varphi(x)}-\chi_{\inter ({{\overline{\inter D}}})}(x)\}\ge 0,
\]
by \eqref{bvp_prime}.
On the other hand, if $x\not\in\inter D$, then it suffices
to show that $\Delta_\infty \varphi(x)\ge 0$. This is
clearly true if $x\in{\Om\setminus\ol{\inter D}}$, so we may assume that $x\in\partial ({\overline{\inter D}})$.
But by Theorem \ref{thm:charac},
$u_0(x)=z(x)$, where $z$ is the unique solution to Jensen's equation \eqref{eq:jensen1},
and $u_0\ge z$ in $\Om$, so that $\varphi$ touches
also $z$ from above at $x$. Hence $\Delta_\infty
\varphi(x)\ge 0$, and we are done.
\end{proof}

Lemma \ref{lem:reduction} above shows that the uniqueness question
in the general case reduces to the uniqueness for
\eqref{bvp:no_interior}. This type of situation was already dealt
with in Lemma \ref{lem:empty_interior_gen_bdr_values}. However,
the reader should notice that in \eqref{bvp:no_interior},
$\lip(f_0)=1$, and thus Lemma
\ref{lem:empty_interior_gen_bdr_values} cannot be used to deduce
that there are more than one solution.

Next we present examples showing that under conditions $\overline{\inter D}\ne \overline D$
and $\inter D\ne\emptyset$ (and $f\equiv 0$), problem \eqref{bvp:grad_constraint} may have
either a unique solution or multiple solutions, depending on the geometry.

\begin{example} {\rm Suppose that $\Om=B_2(0)$, $f\equiv 0$, and $D=B_1(0)\cup D_1$, where
$D_1\subset B_2\setminus \overline B_1$ is any non-empty set with empty interior.
Clearly $\overline{\inter D}\ne \overline D$ and $\inter D\ne\emptyset$. We claim that the only
solution to 
$
\min\{\Delta_\infty z, \abs{Dz}-\chi_{D}\}=0
$ is $u(x)=\abs{x}-2$.
First, as $\abs{x}-2$ is a solution to  
$$\min\{\Delta_\infty u, \abs{Du}-1\}=0,$$
it follows that any solution
$v$ to 
$$\min\{\Delta_\infty v, \abs{Dv}-\chi_{D}\}=0$$
satisfies $v\ge u$, since $v$ is a supersolution to the first equation as well.

 On the other hand, it follows from Theorem \ref{thm:charac} that $u$ is the unique solution to
\[
\min\{\Delta_\infty u, \abs{Du}-\chi_{B_1(0)}\}=0
\]
satisfying the boundary condition $u=0$ on $\partial\Om$.
Thus, owing to Theorem
\ref{thm:comparison1}, we also have $v\le u$, since $v$ is a subsolution to the previous equation as
well.}
\end{example}

For the second example, we need the following lemma:

\begin{lemma}\label{lem:max_solution}
Let $u_0$ be the unique solution to
\begin{equation}\label{bvp:no_interior2}
\begin{cases}
 \min\{\Delta_\infty u, \abs{Du}-\chi_{\overline{\inter D}}\}=0&\text{in $\Om$}\\
u=f&\text{on $\partial\Om$}.
\end{cases}
\end{equation}
Then $u_0$ is the largest solution to \eqref{bvp:grad_constraint}.
\end{lemma}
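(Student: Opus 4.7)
The plan is to prove that $u_0$ is itself a solution of \eqref{bvp:grad_constraint} and that any other solution $v$ of \eqref{bvp:grad_constraint} satisfies $v\le u_0$. The first claim is immediate from the inclusions $\overline{\inter D}\subseteq\overline D$ and $\inter D\subseteq\inter(\overline{\inter D})$, which show that the super- and sub-solution conditions for \eqref{bvp:no_interior2} imply those for \eqref{bvp:grad_constraint}; consequently $u_0$, being a solution of \eqref{bvp:no_interior2}, is also a solution of \eqref{bvp:grad_constraint}.

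For the inequality $v\le u_0$, the key observation is that $\overline{\inter D}$ satisfies the regularity condition $\overline{\inter(\overline{\inter D})}=\overline{\inter D}$, so Theorem~\ref{thm:charac} applies to $u_0$: writing $\mathcal B_0:=\mathcal A\cap\overline{\inter D}$, we have $u_0=z$ on $\mathcal B_0$ and $\Delta_\infty u_0=0$ in $\Om\setminus\overline{\mathcal B_0}$, where $z$ is the solution of Jensen's equation \eqref{eq:jensen1} with boundary values $f$. I would first prove $v=z=u_0$ on $\overline{\mathcal B_0}$ (the latter by continuity) and then deduce $v\le u_0$ in $\Om\setminus\overline{\mathcal B_0}$ by comparing $v$ (a subsolution of $\Delta_\infty u=0$) with $u_0$ (a solution of $\Delta_\infty u=0$ on this open set), the boundary data matching as $v=u_0$ on $\partial\Om$ and on $\partial\overline{\mathcal B_0}\cap\Om$.

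The inequality $v\ge z$ on $\Om$ is automatic: a supersolution of \eqref{bvp:grad_constraint} is a supersolution of \eqref{eq:jensen1} because $\chi_{\overline D}\le 1$, and Jensen's comparison theorem gives $v\ge z$. For the reverse $v\le z$ on $\mathcal B_0$, I would argue by contradiction and follow, almost verbatim, the sup-convolution and gradient-ascent argument of the second part of the proof of Theorem~\ref{thm:charac}, with $\mathcal B$ replaced by $\mathcal B_0$. The only new input needed is a density observation: since $\inter D$ is dense in $\overline{\inter D}$, the set $\inter D\cap\mathcal A$ is dense in the open set $\inter\mathcal B_0=\mathcal A\cap\inter(\overline{\inter D})$, and the continuity of $v$ and $z$ therefore yields
\[
\sup_{\inter\mathcal B_0}(v-z)=\sup_{\inter D\cap\mathcal A}(v-z).
\]
This lets one select the ascent starting point $x_0$ inside $(\inter D)_\delta\cap\mathcal A_\delta$, where the sup-convolution subsolution property $|Dv_\delta|\ge\chi_{(\inter D)_\delta}$ forces $|Dv_\delta(x_0)|\ge 1$; the walk from $x_0$ to a point $a\in\partial\mathcal A_\delta$ produced in Theorem~\ref{thm:charac} then violates the chosen strict inequality with $\sup_{\partial\mathcal A_\delta}(v_\delta-z)$, giving the required contradiction.

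The main obstacle is precisely this density/localization step. The sup-convolution $v_\delta$ carries gradient information only on $(\inter D)_\delta$, not on the a priori larger $(\overline{\inter D})_\delta$, and the difference $(\overline{\inter D})_\delta\setminus(\inter D)_\delta$ -- points within $R(\delta)$ of $\partial\inter D$ -- may have positive measure when $\partial\inter D$ is irregular. It is the topological density of $\inter D$ in $\overline{\inter D}$, rather than any measure-theoretic statement, that permits pushing $x_0$ into $(\inter D)_\delta\cap\mathcal A_\delta$ while keeping $v_\delta-z$ as close as one wishes to $\sup_{\inter\mathcal B_0}(v-z)$.
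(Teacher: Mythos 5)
Your proposal is correct, and its first half (showing that $u_0$ solves \eqref{bvp:grad_constraint} via $\chi_{\overline{\inter D}}\le\chi_{\overline D}$ and $\chi_{\inter D}\le\chi_{\inter(\overline{\inter D})}$) is exactly what the paper does. Where you diverge is the maximality step: the paper disposes of it in one line by invoking the comparison principle, Theorem~\ref{thm:comparison1}, for the regularized set $\overline{\inter D}$, which tacitly requires every solution of \eqref{bvp:grad_constraint} to be a subsolution of \eqref{bvp:no_interior2}; that in turn rests on the identity $\inter(\overline{\inter D})=\inter D$ asserted in the paper's proof, which holds only as the inclusion $\inter D\subseteq\inter(\overline{\inter D})$ for general $D$ (take $D$ an open ball with a hyperplane slice removed), and the inclusion goes the wrong way for the subsolution property. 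You instead unfold the comparison argument --- Theorem~\ref{thm:charac} applied to $u_0$, Jensen comparison to get $v\ge z$ and hence $v=z=h$ on $\Om\setminus\mathcal A$, and the sup-convolution ascent --- and you insert precisely the missing ingredient: since $\inter D$ is dense in $\overline{\inter D}$, the supremum of $v-z$ over $\mathcal B_0=\mathcal A\cap\overline{\inter D}$ is attained in the limit from $\inter D\cap\mathcal A$, where the subsolution property of $v$ does force $\abs{Dv_\delta}\ge 1$ for the starting point of the walk. So your route is longer but self-contained, and it repairs a small gap that the paper's appeal to Theorem~\ref{thm:comparison1} glosses over; the paper's version buys brevity at the cost of relying on a topological identity that is not literally true for arbitrary $D$.
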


\begin{proof}
We only need to show that $u_0$ is a solution to
\eqref{bvp:grad_constraint} as its maximality then follows
directly from the comparison principle, Theorem \ref{thm:comparison1}. To this end, we
first show that $u_0$ is a supersolution. Let $\phi\in C^2(\Om)$
be a function touching $u_0$ from below at $ x\in\Om$. Then,
by
\eqref{bvp:no_interior2},
\[
0\ge \min\{\Delta_\infty \phi, \abs{D\phi}-\chi_{\ol{\inter D}}\}\ge \min\{\Delta_\infty \phi,
\abs{D\phi}-\chi_{\ol D}\}.
\]%
Thus $u_0$ is a supersolution to \eqref{bvp:grad_constraint}.

The subsolution case is quite similar.  Let $\varphi\in C^2(\Om)$ be a
function touching $u_0$ from above at $ x\in\Om$. By $\inter(\overline{\inter D})=\inter D$
and \eqref{bvp:no_interior2}, it follows that
\[
0\le\min\{\Delta_\infty \varphi, \abs{D\varphi}-\chi_{\inter(\overline{\inter D})}\}
=\min\{\Delta_\infty \varphi, \abs{D\varphi}-\chi_{\inter D}\}.
\]
Thus $u_0$ is also a subsolution to \eqref{bvp:grad_constraint}.
\end{proof}

\begin{example}\label{ex:nonuniqueness} {\rm
Let $n=2$, $\Om=\mathopen]-1,1\mathclose[^2$, $f\equiv 0$, and
$D=D_0\cup\{x_0\}$, where $D_0=B_{1/2}(0)$ and $x_0\in \Om\setminus \overline B_{1/2}(0)$ is to be chosen.

Let $u$ be the unique solution to
\[
\begin{cases}
 \min\{\Delta_\infty u, \abs{Du}-\chi_{D_0}\}=0&\text{in $\Om$}\\
u=0&\text{on $\partial\Om$}.
\end{cases}
\]
Then, by Lemma \ref{lem:max_solution}, $u$ is a solution to \eqref{bvp:grad_constraint}.
Moreover, by Theorem \ref{thm:charac},
$$
u(x)=-\dist(x,\partial\Om)
$$
for all $x\in \overline D_0$.
By Lemma \ref{lem:reduction}, equation \eqref{bvp:no_interior}, which in this case reads
\begin{equation}
\label{bvp:reduction_in_this_case}
\begin{split}
\begin{cases}
\min\{\Delta_\infty v, \abs{Dv}-\chi_{\{x_0\}}\}=0&\text{in }\Om\setminus \ol{\inter D}=\Om\setminus \ol D_0\\
v=u&\text{on }\partial(\Om\setminus  \ol D_0),
\end{cases}
\end{split}
\end{equation}
determines the uniqueness of a solution to \eqref{bvp:grad_constraint}.
We show that there exist several solutions for this problem.

Choose $x_0\in \Om\setminus \overline D_0$ so that $-\dist(x,\partial\Om)$
is not differentiable at $x_0$, and let $u_1$ be the unique function such that
\[
\begin{split}
\begin{cases}
u_1(x)=-\dist(x,\partial\Om),&x\in \overline D_0\cup\{x_0\}\\
u_1=0,&\trm{on } \partial\Om\\
\Delta_\infty u_1=0,& \trm{in } \Omega\setminus (\overline D_0\cup\{x_0\}).
\end{cases}
\end{split}
\]
We can now check that $u_1$ is a solution to
\eqref{bvp:reduction_in_this_case}.
Indeed, that $u_1$ is subsolution follows easily, because $\inter\{x_0\}=\emptyset$,
$\Delta_\infty u_1=0$ in $\Omega\setminus (\overline D_0\cup\{x_0\})$, and
there are no $C^2$ functions touching $u_1$ from above at $x_0$.


To show that $u_1$ is also a supersolution, let $\phi\in
C^2(\Om\setminus \ol D_0)$ be a function touching $u_1$ from below
at $ x\in\Om\setminus \ol D_0$. The case $x\in \Om\setminus (\ol
D_0\cup \{x_0\})$ is again clear, and we may assume that $ x=x_0$.
Since $\norm{Du_1}_{L^\infty(\Om)}\le 1$ by the definition of
$u_1$ and properties of infinity harmonic functions, it follows
that $\abs{D\phi(x_0)}\le 1=\chi_{\overline D}(x_0)$. Hence we
have shown that $u_1$ is also a supersolution to
\eqref{bvp:grad_constraint}.

Finally, let us observe that $u(x_0)\ne u_1(x_0)$. The reason is
that $u$ is, as a solution to the infinity Laplace equation $\Delta_\infty h=0$, differentiable at
$x_0$ (see \cite{es}), but $u_1$ is not, because it touches
$-\dist(x,\partial\Om)$ from above at $x_0$.}
\end{example}

\subsection{Minimal and maximal solutions}
It turns out that the boundary value problem \eqref{bvp:grad_constraint} has always
a maximal and a minimal solution. The maximal solution $\bar u$ has been characterized in Lemma
\ref{lem:max_solution}, and the minimal solution $\underline{u}$ can be constructed as follows.
Given any $D\subset\Om$, define
\[
D_i=(D+B(0,{\tfrac1i}))\cap\Om=\{x\in\Om\colon \dist(x,D)<\tfrac1i\}.
\]
Then $D_i$ is open and hence $\overline{D_i}=\overline{\inter D_i}$. By Theorem \ref{thm:unique1},
there is a unique solution $u_i$ to
\[
\begin{cases}
 \min\{\Delta_\infty u, \abs{Du}-\chi_{D_i}\}=0&\text{in $\Om$}\\
u=f&\text{on $\partial\Om$}.
\end{cases}
\]
By the comparison principle, Theorem \ref{thm:comparison1}, the sequence $(u_i)$ is
monotone, and, as each $u_i$ is a subsolution to the infinity Laplace equation,
locally equicontinuous. Moreover, $u_1\le u_i\le h$ in $\overline\Om$, where
$\Delta_\infty h=0$ in $\Om$ and $h=f$ on $\partial\Om$.
Hence $u_i\to u$ locally uniformly in $\Om$ for some $u$ satisfying $u=f$ on $\partial\Om$.
It follows easily from the standard stability results for viscosity solutions that $u$ is
a solution to \eqref{bvp:grad_constraint}. Moreover, Theorem \ref{thm:comparison1}
implies that any solution $v$ to \eqref{bvp:grad_constraint} satisfies $v\ge u_i$ for all $i$'s,
and hence $u$ must be the minimal solution. We will give an alternative characterization
for the minimal solution in Section \ref{sec:games} below.

In Theorem \ref{limit_is_visco}, we proved the existence of a solution
to \eqref{bvp:grad_constraint} using $p$-Laplace approximation. Next we show that
this ``variational'' solution is, in general, neither the minimal nor the maximal solution.

\begin{example} {\rm
Let $\Om=B_1(0)$, $D=\{0\}$, and $f\equiv 0$. Then, by Lemma \ref{lem:empty_interior_gen_bdr_values},
the maximal solution is $\bar u(x)\equiv 0$ and the minimal solution $\underline{u}(x)=\abs{x}-1$.
Since $\abs{D}=0$, it follows that the unique solution $u_p$ to
\[
 \begin{cases}
  \Delta_p v=\chi_D&\text{in $\Om$},\\
v=0&\text{on $\partial\Om$},
 \end{cases}
\]
is $u_p\equiv 0$. Hence, in this case, the (unique) variational solution is the maximal solution
 $\bar u(x)\equiv 0$.

Now if $\Om$ and $f$ are as above, and $D=\Om\cap
(\R\setminus\mathbb{Q})^n$ (that is, $D$ consists of points in
$\Om$ with irrational coordinates), then, again by Lemma
\ref{lem:empty_interior_gen_bdr_values}, the maximal solution is
$\bar u(x)\equiv 0$ and the minimal solution
$\underline{u}(x)=\abs{x}-1$. However, since $\chi_D(x)=1$ a.e.,
it follows that $u_p$ is a solution also to $\Delta_p v=1$ in
$\Om$. Then it is well-known, see e.g.\ \cite{BBM} and the references therein, that
$u_p\to -\dist(x,\partial\Om)=\underline{u}(x)$ as
$p\to\infty$. Hence, in this case, the (unique) variational
solution is the minimal solution $\underline{u}(x)$.}
\end{example}

\begin{example} {\rm
Let $\Om=B_4(0)$, $f\equiv 0$, and $D=\{0\}\cup \hat D$, where $\hat D=(B_3\setminus B_2)\cap(\R\setminus
\mathbb{Q})^n$ (that is, $\hat D$ consists of the points in the annulus $B_3\setminus B_2$ having
irrational coordinates). Then, as $\inter D=\emptyset$, $\bar
u\equiv 0$ and $\underline{u}(x)=4-\abs{x}$. However, the results
in \cite{BBM} imply that $$u_\infty (x)=\lim\limits_{p\to\infty}
u_p (x)= -\dist(x,\partial\Om)$$ in $B_3\setminus B_2$ and that
$u_\infty$ is a solution to the infinity Laplace equation in
$B_2$; thus in this case the (unique) variational solution
$u_\infty$ is neither the minimal nor the maximal solution. }
\end{example}

\section{Games}\label{sec:games}

In this section, we consider a variant of the tug-of-war game
introduced by Peres, Schramm, Sheffield and Wilson in \cite{PSSW},
and show that the value functions of this game converge, as the
step size tends to zero, to the minimal solution of
\eqref{bvp:grad_constraint}.

As before, let $\Om$ be a bounded open set and $D\subset \Omega$.
For a fixed $\eps>0$, consider the following two-player
zero-sum-game. If $x_0\in\Om\setminus D$, then the players play a
tug-of-war game as described in \cite{PSSW}, that is, a fair coin
is tossed and the winner of the toss is allowed to move the game
token to any $x_1\in\ol B_\eps (x_0)$. On the other hand, if
$x_0\in D\cap \Om$, then Player II, the player seeking to minimize
the final payoff, can either sell the turn to Player~I with the
price $-\eps$ or decide that they toss a fair coin and play
tug-of-war. If Player II sells the turn, then Player I can move
the game token to any $x_1\in\ol B_\eps (x_0)$. After the first
round, the game continues from $x_1$ according to the same rules.

This procedure yields a possibly infinite sequence of game states
$x_0,x_1,\ldots$ where  every $x_k$ is a random variable. The game
ends when the game token hits $\Gamma_\eps$, the boundary strip of
width $\eps$ given by
\[
\begin{split}\Gamma_\eps=
\{x\in \RR^n \setminus \Om\,:\,\dist(x,\partial \Om)< \eps\}.
\end{split}
\]
We denote by $x_\tau \in \Gamma_\eps$ the first point in the
sequence of game states that lies in $\Gamma_\eps$, so that $\tau$
refers to the first time we hit $\Gamma_{\eps}$.

At this time the game
ends with the terminal payoff given by $F(x_\tau)$, where
$F:\Gamma_\eps
\to
\R$ is a given
Borel measurable
continuous
\emph{payoff function}. Player I earns $F(x_\tau)$ while Player II
earns $-F(x_\tau)$.

A strategy $S_\I$ for Player I is a function defined on the
partial histories that gives the next game position $
S_\I{\left(x_0,x_1,\ldots,x_k\right)}=x_{k+1}\in \ol B_\eps(x_k)$
if Player I wins the toss. Similarly Player II plays according to
a
strategy $S_\II$. 
In addition, we define a decision variable,
which tells when Player II decides to sell a turn
\[
\begin{split}
\theta(x_0,\ldots,x_k)=\begin{cases} 1,&x_k\in D \trm{ and Player
II sells a turn,}\\
0,& \trm{otherwise}.
\end{cases}
\end{split}
\]

The one step transition probabilities will be
 \[
  \begin{split}
\pi_{S_\I,S_\II,\theta}&(x_0,\ldots,x_k,{A})\\
&= \big(1-\theta(x_0,\ldots,x_k)\big)\frac{1}{2}\Big(
\delta_{S_\I(x_0,\ldots,x_k)}({A})+\delta_{S_\II(x_0,\ldots,x_k)}({A})\Big)\\
&\hspace{1
em}+\theta(x_0,\ldots,x_k)\delta_{S_\I(x_0,\ldots,x_k)}(A).
\end{split}
\]
By using the Kolmogorov's extension theorem and the one step transition probabilities, we can build a
probability measure $\mathbb{P}^{x_0}_{S_\I,S_\II,\theta}$ on the
game sequences. The expected payoff, when starting from $x_0$ and
using the strategies $S_\I,S_\II$, is
\begin{equation}
\label{eq:defi-expectation}
\begin{split}
\mathbb{E}_{S_{\I},S_\II,\theta}^{x_0}&\left[F(x_\tau)-\eps\sum_{i=0}^{\tau-1}
\theta(x_0,\ldots, x_i)\right]\\
 &=\int_{H^\infty} \Big(F(x_\tau)-\eps
\sum_{i=0}^{\tau-1} \theta(x_0,\ldots, x_i)\Big) \ud
\mathbb{P}^{x_0}_{S_\I,S_\II,\theta},
\end{split}
\end{equation}
where $F\colon\Gamma_\eps\to\R$ is the given continuous function
prescribing the terminal payoff.

The \emph{value of the game for Player I} is given by
\[
u^\eps_\I(x_0)=\sup_{S_{\I}}\inf_{S_\II,\theta}\,
\mathbb{E}_{S_{\I},S_\II,\theta}^{x_0}\left[F(x_\tau)-\eps\sum_{i=0}^{\tau-1}
\theta(x_0,\ldots,x_i)\right]
\]
while the \emph{value of the game for Player II} is given by
\[
u^\eps_\II(x_0)=\inf_{S_\II,\theta}\sup_{S_{\I}}\,
\mathbb{E}_{S_{\I},S_\II,\theta}^{x_0}\left[F(x_\tau)-\eps\sum_{i=0}^{\tau-1}
\theta(x_0,\ldots, x_i)\right].
\]
Intuitively, the values $u^\eps_\I(x_0)$ and $u^\eps_\II(x_0)$ are the best
expected outcomes
 each player can  guarantee when the game starts at
$x_0$. Observe that if the game does not end almost surely, then
the expectation \eqref{eq:defi-expectation} is undefined. In this
case, we define $\mathbb{E}_{S_{\I},S_\II,\theta}^{x_0}$ to take
value $-\infty$ when evaluating $u^\eps_\I(x_0)$ and $+\infty$
when evaluating $u^\eps_\II(x_0)$.

We start the analysis of our game with the statement of the  {\it
Dynamic Programming Principle} (DPP).

\begin{lemma}[DPP]
\label{lem:DPP}
 The value function for Player I satisfies for
$x\in \Om$
\[
\begin{split}
u^\eps_\I(x) = \min \Big\{ \frac12  \sup_{y\in \ol B_\eps (x)}
u^\eps_\I(y) + \frac12 \inf_{y\in \ol B_\eps (x)} u^\eps_\I(y) ;
 \sup_{y\in \ol B_\eps (x)} u^\eps_\I(y) - \eps \chi_{D}(x)\Big\}
\end{split}
\]
and  $u_\I^\eps(x)=F(x)$ in $\Gamma_\eps$. The value function
for Player II, $u^\eps_\II$, satisfies the same equation.
\end{lemma}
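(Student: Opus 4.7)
The plan is to prove the DPP by establishing both inequalities $u^\eps_\I(x)\le G(x)$ and $u^\eps_\I(x)\ge G(x)$, where $G(x)$ denotes the right-hand side of the claimed identity; the argument for $u^\eps_\II$ proceeds along analogous lines with the roles of the $\sup$ over $S_\I$ and the $\inf$ over $(S_\II,\theta)$ exchanged. The underlying idea is the Bellman principle: after one round of play the game restarts from the new position $x_1$, so its continuation value is controlled by $u^\eps_\I(x_1)$ up to $\eta$-errors in strategy selection. Each player should therefore play optimally with respect to the one-step operator $G$ and then concatenate with a near-optimal continuation strategy.

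For $u^\eps_\I(x_0)\ge G(x_0)$, I would fix $\eta>0$ and, using the definition of $u^\eps_\I$ as a supremum, select for each $y\in\Om$ a strategy $\widetilde S_\I^{(y)}$ with $\inf_{S_\II,\theta}\mathbb{E}^y_{\widetilde S_\I^{(y)},S_\II,\theta}[F(x_\tau)-\eps\sum\theta_i]\ge u^\eps_\I(y)-\eta/2$. Player I then adopts the composite strategy: in round one, move to $y_\I\in\ol B_\eps(x_0)$ with $u^\eps_\I(y_\I)\ge \sup_{\ol B_\eps(x_0)}u^\eps_\I-\eta/4$; from round two onward, follow $\widetilde S_\I^{(x_1)}$ based on the realized step. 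Against this strategy, Player II's coin-toss response yields an expectation of at least $\tfrac12 u^\eps_\I(y_\I)+\tfrac12\inf_{\ol B_\eps(x_0)}u^\eps_\I-\eta/2$, while the selling response (available only when $x_0\in D$) yields at least $u^\eps_\I(y_\I)-\eps-\eta/2$. The infimum of these two bounds is at least $G(x_0)-\eta$; passing to the supremum over Player I's strategies and sending $\eta\to 0$ finishes this direction.

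For $u^\eps_\I(x_0)\le G(x_0)$, I would fix $\eta>0$ and an arbitrary Player I strategy $S_\I$. In round one, Player II compares the two quantities defining $G(x_0)$ and selects the smaller, either setting $\theta_0:=1$ (only if $x_0\in D$ and selling achieves the minimum) or $\theta_0:=0$ and moving to $y_\II\in\ol B_\eps(x_0)$ with $u^\eps_\I(y_\II)\le \inf_{\ol B_\eps(x_0)} u^\eps_\I+\eta/4$. From round two onward, for the realized $x_1$ and the continuation of $S_\I$ thereafter, the definition of $u^\eps_\I(x_1)$ provides an $\eta/2$-nearly-optimal pair $(S_\II^{(x_1)},\theta^{(x_1)})$ which Player II uses. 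Conditioning on the round-one outcome and applying this near-optimality yields $\mathbb{E}^{x_0}_{S_\I,S_\II,\theta}[F(x_\tau)-\eps\sum\theta_i]\le G(x_0)+\eta$; the desired inequality follows by taking the infimum over Player II's play and letting $\eta\to 0$.

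The main obstacle will be ensuring that the near-optimal selections assemble into admissible (Borel measurable) strategies defined on the full history space. This I would resolve in the standard way, either by invoking a measurable selection theorem, or, more pedestrianly, by choosing $\eta$-optimal points from a countable dense subset of $\ol B_\eps(x)$ and verifying that the resulting choices depend Borel measurably on the history. If the game fails to terminate under some strategy profiles, the conventions $\pm\infty$ stipulated after \eqref{eq:defi-expectation} make the DPP automatically consistent on such profiles, so no separate termination argument is needed.
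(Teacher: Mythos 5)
The paper itself contains no proof of this lemma: the DPP is stated as a known fact (the intended source is the companion paper \cite{MPR2}) and only its consequences are developed in Section 5. So the comparison is with the standard literature argument, and your two-inequality Bellman scheme --- one-step optimization against the right-hand-side operator, concatenation with $\eta$-optimal continuation strategies, conditioning on the first round --- is exactly that standard route, and its skeleton is sound. The measurability issue you flag is real and is resolved as you suggest. You should also make explicit the step where the conditional expectation of the concatenated game given the first position $x_1$ is identified with the expectation of the game restarted at $x_1$ under the shifted strategies; this ``Markov property'' of the measures $\mathbb{P}^{x_0}_{S_\I,S_\II,\theta}$ produced by Kolmogorov's extension theorem is the actual content of the Bellman principle and should not be left tacit.

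The one genuine soft spot is your closing claim that the $\pm\infty$ conventions make any termination argument unnecessary. That is true only for the inequalities in which the constructed player is the \emph{inner} optimizer (e.g.\ $u^\eps_\I(x_0)\le G(x_0)$: non-termination is scored $-\infty$, which only helps). For $u^\eps_\I(x_0)\ge G(x_0)$ the convention works against you: if some $(S_\II,\theta)$ forces non-termination against your composite strategy, that profile is scored $-\infty$ and the infimum over Player II's play collapses, destroying the bound; symmetrically for $u^\eps_\II(x_0)\le G(x_0)$ with $+\infty$. You must therefore show that the composite strategy ends the game almost surely against every response. This does follow from your construction, but it has to be said: the $\eta/2$-optimal continuation strategies $\widetilde S_\I^{(y)}$ satisfy $\inf_{S_\II,\theta}\mathbb{E}^{y}\ge u^\eps_\I(y)-\eta/2>-\infty$, which already forces almost sure termination from round two onward --- \emph{provided} $u^\eps_\I>-\infty$. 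That finiteness is itself not free here, since the running cost $-\eps\sum_i\theta(x_0,\dots,x_i)$ is unbounded below; one obtains it by letting Player I pull toward a fixed boundary point $y_0$ and noting that $|x_k-y_0|+\eps\sum_{i<k}\theta(x_0,\dots,x_i)$ is (essentially) a nonnegative supermartingale, so the expected total selling cost is bounded by $|x_0-y_0|$ up to an additive $\eps$. With these two points supplied, your plan becomes a complete proof.
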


If $u^\eps_\I= u^\eps_\II$, we say that the game has a
value. Our game has a value, but we postpone the proof of this
fact until Theorem~\ref{thm:D-game-value}. First we prove that the
value $u_\eps$ of the game converges to the minimal solution of
\eqref{bvp:grad_constraint}.

\begin{theorem} \label{thm:game-gives-minimal-solution}
Let $u_\eps$ be the family of game values for a Lipschitz continuous
boundary data $F$, and let $u$ be the minimal solution to
\eqref{bvp:grad_constraint} with $F=f$ on $\partial\Om$. Then
\[
\begin{split}
u_\eps \to u \quad \trm{uniformly in }\ol \Om
\end{split}
\]
as $\eps\to0$.
\end{theorem}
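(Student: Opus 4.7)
The plan is to proceed in four steps: (i) establish that $\{u_\eps\}$ is uniformly bounded and equicontinuous on $\overline\Omega$; (ii) extract a subsequential uniform limit $v\in C(\overline\Omega)$ by Arzel\`a--Ascoli; (iii) verify that $v$ is a viscosity solution of \eqref{bvp:grad_constraint} with $v=f$ on $\partial\Omega$; (iv) identify $v$ with the minimal solution $u$. Since every subsequential limit would then equal $u$, this upgrades to full convergence.

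For (i), the uniform bound $\min F\le u_\eps\le\max F$ is immediate, since the penalty $-\eps\sum\theta_i$ only helps Player II, and ignoring her selling option gives a standard tug-of-war bound as in \cite{PSSW}. Interior equicontinuity follows by coupling games started at nearby points $x,y$ via translated strategies, yielding $|u_\eps(x)-u_\eps(y)|\le C|x-y|+o(1)$; boundary equicontinuity follows from comparison with cone barriers built from Lipschitz approximations of $f$. Step (ii) is then immediate.

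For (iii), I would use the DPP (Lemma \ref{lem:DPP}) together with a second-order Taylor expansion. At a strict local maximum $\hat x$ of $v-\varphi$, nearby maxima $\hat x_\eps$ of $u_\eps-\varphi$ satisfy
\[
u_\eps(\hat x_\eps)=\min\Bigl\{\tfrac12\sup_{\overline B_\eps(\hat x_\eps)}u_\eps+\tfrac12\inf_{\overline B_\eps(\hat x_\eps)}u_\eps,\ \sup_{\overline B_\eps(\hat x_\eps)}u_\eps-\eps\chi_D(\hat x_\eps)\Bigr\}.
\]
Both entries of the min are at least $u_\eps(\hat x_\eps)$; replacing $u_\eps$ by $\varphi$ modulo a uniform error, expanding to second order, and dividing the first entry by $\eps^2$ yields $\Delta_\infty\varphi(\hat x)\ge 0$, while dividing the second by $\eps$ (when $\hat x\in\inter D$) yields $|D\varphi(\hat x)|\ge 1$. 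An analogous computation at a strict minimum $\hat x$ of $v-\phi$ gives the supersolution alternative $\Delta_\infty\phi(\hat x)\le 0$ or $|D\phi(\hat x)|\le\chi_{\overline D}(\hat x)$.

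Step (iv) is the main obstacle, since \eqref{bvp:grad_constraint} need not have a unique solution. The minimality of $u$ gives $v\ge u$ for free, so the task is to prove $v\le u$. My plan is to exploit the characterization $u=\lim_i u_i$, where $u_i$ is the unique solution associated to the enlarged regular set $D_i=(D+B(0,1/i))\cap\Omega$. Given $i$, and for $\eps$ sufficiently small relative to $1/i$, I would construct a Player II strategy guided by $u_i$: on tug-of-war turns she pulls toward $\arg\min_{\overline B_\eps(x)}u_i$, and whenever $x\in D$ she chooses to sell. Since $D\subset\inter D_i$ and $u_i$ satisfies $|Du_i|\ge 1$ on $\inter D_i$ in the viscosity sense, the oscillation of $u_i$ on $\eps$-balls centered in $D$ is at least $\eps-o(\eps)$, so selling is at most $o(\eps)$-suboptimal per round. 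A supermartingale-type computation built on $u_i$, combined with optional stopping at $\tau$, should give $u_\eps(x_0)\le u_i(x_0)+r_i(\eps)$ with $r_i(\eps)\to 0$ as $\eps\to 0$. Choosing $i=i(\eps)\to\infty$ slowly and combining with $u_i\uparrow u$ yields $v\le u$. The hardest point is to quantify the $o(\eps)$ error over the $O(\eps^{-2})$ rounds until stopping uniformly in $i$, which likely requires regularizing $u_i$ by sup-convolution and using the increasing slope estimate (as in the proof of Theorem \ref{thm:charac}) to bound oscillations of $u_i$ from below on $D$.
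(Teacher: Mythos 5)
Your steps (i)--(iii) are workable (step (iii) is in fact a reasonable alternative to the paper, which never verifies the limit PDE for the $D$-game directly), though your equicontinuity argument ``by coupling games started at nearby points via translated strategies'' is shaky because the set $D$, and hence the selling option, is not translation invariant; the paper instead proves the oscillation bound $\sup_{\ol B_\eps(x)}u_\eps-\inf_{\ol B_\eps(x)}u_\eps\le 4\max\{\lip(F),1\}\eps$ intrinsically, by iterating the DPP along a chain of near-extremal points up to the boundary (Theorem \ref{thm:D-game-convergence}). The genuine gap is in step (iv). Your Player II strategy is supposed to make $M_k=u_i(x_k)-\eps\sum_{j<k}\theta_j$ (up to small corrections) a supermartingale, but this fails in the shell $D_i\setminus D$: there the token is outside $D$, so tug-of-war is played and the supermartingale property requires $\tfrac12\sup_{\ol B_\eps(x)}u_i+\tfrac12\inf_{\ol B_\eps(x)}u_i\le u_i(x)+o(\eps^2)$, i.e.\ discrete infinity-\emph{super}harmonicity of $u_i$. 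By Theorem \ref{thm:charac}, on $\mathcal A\cap D_i$ the function $u_i$ coincides with the Jensen solution $z$, which is only infinity-subharmonic there; at ridge points of $z$ the discrete mean exceeds $z$ by an amount of order $\eps$ (not $\eps^2$), and such defects accumulated over the $O(\eps^{-2})$ rounds before stopping are not controlled by sending $i\to\infty$ slowly. Two further problems: even where $u_i$ is genuinely infinity harmonic, the $o(\eps^2)$ discrete superharmonicity is not free for a nonsmooth function (this is essentially the hard half of \cite{PSSW}); and the rule ``sell whenever $x_k\in D$'' is wrong on $D\cap\{|Dh|>1\}$, where $\sup_{\ol B_\eps}u_i-u_i$ can exceed $\eps$ and selling hurts Player II.

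The paper takes an entirely different route to the identification. It introduces the $\Om$-game (Player II may sell everywhere) and shows its value $z_\eps$ converges to the Jensen solution $z$ (Theorem \ref{thm:Om-game-to-jensen-sol}); since Player II has successively more options, $z_\eps\le u_\eps\le h_\eps$, where $h_\eps$ is the tug-of-war value. Because $z=h$ on $\Om\setminus\mathcal A$ by Theorem \ref{thm:jensen_is_patch}, the sandwich pins down $u_\eps\to h$ there. On $\mathcal B=\mathcal A\cap D$ the lower bound $u_\eps\ge z_\eps$ gives one inequality for free, and the upper bound $u_\eps\le z+o(1)$ is proved by contradiction: the DPP forces $\sup_{\ol B_\eps(x)}u_\eps-u_\eps(x)\ge\eps$ at points of $D$, so one can march along a chain on which $u_\eps$ gains at least $\eps$ per step while $z$ gains at most $\eps$ per step, until the chain exits $\mathcal A$ where $u_\eps$ and $z$ are already known to agree up to $o(1)$ --- the discrete analogue of the increasing-slope argument in Theorem \ref{thm:charac}. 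On $\mathcal A\setminus D$ the game is pure tug-of-war and \cite{PSSW} applies with the already-identified boundary data. This determines the limit of every subsequence, and the resulting function is the minimal solution. If you want to keep your strategy-based approach, you would at minimum have to restrict selling to $D\cap\mathcal A$, replace the comparison function $u_i$ by something discretely superharmonic off $D$, and supply a quantitative version of the tug-of-war comparison; as written, step (iv) does not close.
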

As a first step, we prove that, up to selecting a subsequence, $u_\eps\to u$
as $\eps\to 0$ for some Lipschitz function $u$.

\begin{theorem}\label{thm:D-game-convergence}
Let $u_\eps$ be a family of game values for a Lipschitz continuous
boundary data $F$. Then there exists a Lipschitz continuous
function $u$ such that, up to selecting a subsequence,
\[
\begin{split}
u_\eps \to u \quad \trm{uniformly in }\ol \Om
\end{split}
\]
as $\eps\to0$.
\end{theorem}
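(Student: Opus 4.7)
The plan is to follow a three-step Arzel\`a--Ascoli argument adapted to the game-theoretic value functions: (i) obtain a uniform $L^\infty$-bound on the family $\{u_\eps\}$; (ii) establish an asymptotic Lipschitz estimate; (iii) extract a uniformly convergent subsequence whose limit is Lipschitz.

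\textbf{Step 1: Uniform bound.} I would first show that $\|u_\eps\|_{L^\infty(\overline\Om)}\le C$ with $C$ independent of $\eps$. The upper bound $u_\eps\le\sup_{\partial\Om}F$ follows from the DPP together with the observation that the strategy in which Player~II never exercises the selling option reduces our game to the standard tug-of-war of \cite{PSSW}, whose value is $\le\sup_{\partial\Om}F$. For the lower bound, Player~I adopts the strategy of always pulling the token toward a fixed nearest boundary point. A standard martingale argument (using that $|x_k|^2$ is a submartingale under this strategy) shows $\mathbb{E}[\tau]=O(\mathrm{diam}(\Om)^2/\eps^2)$ is too crude, but the pulling strategy in fact gives $\mathbb{E}[\tau]=O(\mathrm{diam}(\Om)/\eps)$; since each selling decision costs at most $\eps$, the total deduction from selling is $O(\mathrm{diam}(\Om))$, yielding $u_\eps\ge\inf_{\partial\Om}F-C\,\mathrm{diam}(\Om)$.

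\textbf{Step 2: Asymptotic Lipschitz estimate.} This is the heart of the proof. The goal is to show
$$
|u_\eps(x)-u_\eps(y)|\le L\,|x-y|+\omega(\eps)
$$
for all $x,y\in\overline\Om$, where $L$ depends only on $\lip(F)$, $\|F\|_{L^\infty(\partial\Om)}$ and $\mathrm{diam}(\Om)$, and $\omega(\eps)\to 0$. I would couple two games started at $x$ and $y$ by letting Player~I in the game at $x$ copy, translated by $y-x$, a near-optimal strategy from the game started at $y$, with coin tosses and selling decisions synchronized. When the translated move would leave $\Om$, I would project it onto $\partial\Om$ and absorb the resulting defect using the Lipschitz constant of $F$ and the uniform bound from Step~1. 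This yields the asymptotic estimate.

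\textbf{Step 3: Extraction and conclusion.} Combining Steps~1 and~2, the family $\{u_\eps\}$ is uniformly bounded and asymptotically equicontinuous on $\overline\Om$. A diagonal extraction then gives a sequence $\eps_j\to 0$ and a function $u\in C(\overline\Om)$ such that $u_{\eps_j}\to u$ uniformly on $\overline\Om$. Passing to the limit in the inequality of Step~2 shows that $u$ is $L$-Lipschitz.

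\textbf{Main obstacle.} The delicate point is Step~2, and in particular the handling of selling decisions in the coupled games. Since $D$ may be geometrically irregular, Player~II's optimal sales will in general happen at different steps in the two games and at positions where only one of the coupled tokens lies in $D$. To deal with this, one lets Player~I in the ``upper'' game absorb the mismatch: each discrepant sale contributes at most $\eps$ to the payoff difference and occurs at most $O(\mathrm{diam}(\Om)/\eps)$ times, but by a careful pairing one can show the cumulative effect is of lower order and is swept into the error term $\omega(\eps)\to 0$ as $\eps\to 0$.
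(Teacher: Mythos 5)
There is a genuine gap, and it sits exactly where you flagged it: Step 2. The coupling/translation argument does not survive the selling option. To bound $u_\eps(x)-u_\eps(y)$ you must let one player mimic, translated by $y-x$, a near-optimal behaviour from the other game; but the decision variable $\theta$ may only be exercised when the token lies in $D$, and $D$ is an \emph{arbitrary} subset of $\Om$ with no regularity, so the two coupled tokens (which differ by the fixed vector $y-x$) will in general disagree about membership in $D$. A discrepant sale is not merely a payoff error of size $\eps$: it changes \emph{who moves next} (a sure move for Player I in one game versus a fair coin toss in the other), so the two trajectories desynchronize and the whole coupling collapses from that round on. Even if one ignores this and only books the payoff discrepancies, your own accounting gives $\eps\cdot O(\operatorname{diam}(\Om)/\eps)=O(\operatorname{diam}(\Om))$, which is $O(1)$ and not an error term $\omega(\eps)\to 0$; the claim that ``a careful pairing'' reduces this to lower order is asserted, not proved, and I do not see how to prove it, since the token can visit the mismatch set $D\bigtriangleup(D+(x-y))$ an unbounded number of times. (A secondary issue: the expected exit time under the pulling strategy is $O(\operatorname{diam}(\Om)^2/\eps^2)$, not $O(\operatorname{diam}(\Om)/\eps)$, since pulling only yields a supermartingale with increments $\pm\eps$, not a strict drift; the $L^\infty$ bound in Step 1 is still salvageable, e.g.\ by sandwiching $z_\eps\le u_\eps\le h_\eps$ between the $\Om$-game and plain tug-of-war values, but not by the argument you give.)

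The paper avoids coupling altogether and argues directly from the DPP. One shows the single estimate $A(x):=\sup_{\ol B_\eps(x)}u_\eps-\inf_{\ol B_\eps(x)}u_\eps\le 4\max\{\lip(F),1\}\eps$ for all $x\in\Om$, by contradiction: if $A(x_0)$ exceeds this bound, the minimum in the DPP must be attained by the averaged (tug-of-war) term, whence
\[
\sup_{\ol B_\eps(x_0)}u_\eps-u_\eps(x_0)=u_\eps(x_0)-\inf_{\ol B_\eps(x_0)}u_\eps=\tfrac12 A(x_0),
\]
and choosing points nearly attaining the supremum (respectively the infimum) one propagates the large oscillation step by step in both directions until both chains reach $\Gamma_\eps$; summing the increments forces $F$ to have difference quotient larger than $\lip(F)$, a contradiction. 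This oscillation bound is exactly the asymptotic equicontinuity required by the asymptotic Arzel\`a--Ascoli lemma of \cite{MPR1}, and it is insensitive to the geometry of $D$ because the only role of $\chi_D$ is the trivial bound $\eps\chi_D\le\eps$ in ruling out the selling alternative of the DPP. If you want to keep your three-step structure, the fix is to replace your Step 2 by this DPP-propagation argument; the coupling route would require regularity of $D$ that the theorem does not assume.
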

\begin{proof}
Since $\Om$ is bounded, it suffices to prove asymptotic
Lipschitz continuity for the family $u_\eps$ and then use the asymptotic
version of Arzel\`a-Ascoli lemma from
\cite{MPR1}. We prove the required oscillation estimate by arguing by contradiction:
If there exists a point where the oscillation
$$
A(x) := \sup_{y\in
\ol B_\eps (x)} u_\eps (y) - \inf_{y\in \ol B_\eps (x)}
u_\eps (y)
$$
is large compared to the oscillation of the boundary data, then
the DPP takes the same form as for the standard tug-of-war game. Intuitively,
the tug-of-war never reduces the oscillation when playing to
$\sup$ or $\inf$ directions. Thus we can iterate this idea up to
the boundary to show that the oscillation of the boundary data must
be larger than it actually is, which is the desired contradiction.

To be more precise, we claim that
$$
A(x) \leq 4 \max \{ \lip (F) ; 1 \} \eps,
$$
for all $x\in \Om$.
Aiming for a contradiction, suppose that there exists $x_0\in
\Omega$ such that
$$
A(x_0) > 4 \max \{ \lip (F) ; 1 \} \eps.
$$
In this case, we have that
\begin{equation}
\label{eq:tug-of-war-like-dpp}
\begin{split}
u_\eps(x_0)
&=\min \Big\{ \frac12
\sup_{
\ol B_\eps (x_0)}u_\eps (y) + \frac12 \inf_{ \ol B_\eps (x_0)}
u_\eps (y) ;
 \sup_{ \ol B_\eps (x_0)}u_\eps (y) - \eps \chi_{D}\Big\} \\
&= \frac12
\sup_{
\ol B_\eps (x_0)}u_\eps (y) + \frac12 \inf_{ \ol B_\eps (x_0)}
u_\eps (y).
\end{split}
\end{equation}
The reason is that the alternative
$$
 \frac12
\sup_{y\in
\ol B_\eps (x_0)}u_\eps (y) + \frac12 \inf_{y\in \ol B_\eps (x_0)}
u^\eps (y) >
 \sup_{y\in \ol B_\eps (x_0)}u_\eps (y) - \eps \chi_{D}
$$
would imply
\begin{equation}
\label{eq:contradiction}
\begin{split}
A(x_0) =\sup_{y\in \ol B_\eps (x_0)}u_\eps (y) - \inf_{y\in \ol
B_\eps (x_0)}u_\eps (y) < 2\eps \chi_{D} \leq 2\eps,
\end{split}
\end{equation}
which is a contradiction with $A(x_0) > 4 \max \{ \lip (F) ; 1 \} \eps$.
It follows from \eqref{eq:tug-of-war-like-dpp} that
$$
\sup_{y\in
\ol B_\eps (x_0)}u_\eps (y) -u_\eps (x_0) =u_\eps(x_0) -\inf_{y\in \ol B_\eps
(x_0)}u_\eps (y) =\frac12 A(x_0) .
$$
Let $\eta>0$ and take $x_1 \in \ol B_\eps (x_0)$ such
that
$$
u_\eps (x_1) \geq \sup_{y\in \ol B_\eps (x_0)}u_\eps (y) -
\frac{\eta}{2}.
$$
We obtain
$$
u_\eps (x_1) -u_\eps (x_0) \geq
\frac12 A(x_0)-
\frac{\eta}{2} \geq 2\max \{ \lip (F) ; 1 \} \eps - \frac{\eta}{2},
$$
and, since $x_0\in \ol B_\eps (x_1)$, also 
$$
\sup_{y\in \ol B_\eps (x_1)}u_\eps (y) - \inf_{y\in \ol B_\eps (x_1)}u_\eps (y)  \geq 2\max
\{ \lip (F) ; 1 \} \eps - \frac{\eta}{2}.
$$
Arguing as before, \eqref{eq:tug-of-war-like-dpp} also holds at $x_1$, since otherwise the above
inequality would lead to a contradiction similarly as \eqref{eq:contradiction} for small enough $\eta$.
Thus
$$
\sup_{y\in
\ol B_\eps (x_1)}u_\eps (y) -u_\eps (x_1)
= u_\eps (x_1)- \inf_{y\in \ol B_\eps (x_1)}u_\eps (y)\geq 2\max
\{ \lip (F) ; 1 \} \eps - \frac{\eta}{2},
$$
so that
\[
\begin{split}
A(x_1)& =
\sup_{y\in
\ol B_\eps (x_1)}u_\eps (y)-u_\eps (x_1) +u_\eps (x_1) - \inf_{y\in \ol B_\eps (x_1)}
u_\eps (y) \\
&\geq 4\max
\{ \lip (F) ; 1 \} \eps - \eta.
\end{split}
\]
Iterating this procedure, we obtain $x_i \in \ol B_\eps (x_{i-1})$
such that
\begin{equation}\label{big_slope}
u_\eps (x_i) -u_\eps (x_{i-1}) \geq 2\max \{ \lip (F) ; 1
\}
\eps -
\frac{\eta}{2^i}
\end{equation}
and
\begin{equation}\label{big_osc}
A(x_i) \geq 4\max
\{ \lip (F) ; 1 \} \eps - (\sum_{j=0}^{i-1}  \frac{\eta}{2^j}).
\end{equation}

We can proceed with an analogous argument considering points where
the infimum is nearly attained to obtain $x_{-1}$, $x_{-2}$,...
such that $x_{-i} \in \ol B_\eps (x_{-(i-1)})$, and
\eqref{big_slope} and \eqref{big_osc} hold.
Since $u_\eps$ is bounded, there must exist $k$ and $l$ such that
$x_k, x_{-l}\in\Gamma_\eps$, and we have
$$
\begin{array}{rl}
\displaystyle \frac{|F (x_k) - F(x_{-l}) | }{| x_k - x_{-l}|} & \displaystyle  \geq
\frac{\displaystyle \sum\limits_{j=-l+1}^k u_\eps (x_{j}) -u_\eps (x_{j-1}) }{\eps
(k+l)}\\
& \displaystyle
\geq 2\max \{ \lip (F) ; 1
\} -
\frac{2\eta}{\eps},
\end{array}
$$
a contradiction.
Therefore
$$
A(x) \leq 4 \max \{\lip (F) ; 1 \} \eps,
$$
for every $x\in \Omega$.
\end{proof}

In order to prove Theorem~\ref{thm:game-gives-minimal-solution},
we define a modified game: the difference to the previous game is that
Player II can sell turns in the whole $\Om$ and not just when the token is in $D$.
We refer to our original game as $D$-game and to the modified game as $\Om$-game.

\begin{lemma}[DPP, $\Om$-game]
\label{lem:DPP-Om-game}
 The value function for Player I satisfies for
$x\in \Om$
\[
\begin{split}
u^\eps_\I(x) = \min \Big\{ \frac12  \sup_{y\in \ol B_\eps (x)}
u^\eps_\I(y) + \frac12 \inf_{y\in \ol B_\eps (x)}u^\eps_\I(y) ;
 \sup_{y\in \ol B_\eps (x)} u^\eps_\I(y) - \eps \Big\}
\end{split}
\]
and  $u^\eps_\I(x)=F(x)$ in $\Gamma_\eps$. The value function
for Player II, $u^\eps_\II$, satisfies the same equation.
\end{lemma}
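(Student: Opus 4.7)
The plan is to follow verbatim the proof of Lemma~\ref{lem:DPP}, observing that the $\Omega$-game is formally the same as the $D$-game but with $D$ replaced by $\Omega$. The transition probabilities, the expected payoff and the definitions of $u_\I^\eps$ and $u_\II^\eps$ all carry over upon substituting $\chi_D(x)$ by $\chi_\Omega(x) \equiv 1$ on $\Omega$, so the DPP for the $\Omega$-game is just the DPP for the $D$-game specialized to $D = \Omega$. The boundary condition $u_\I^\eps = F$ on $\Gamma_\eps$ is immediate from the definition, since the game terminates as soon as the token enters $\Gamma_\eps$.

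For completeness I sketch the main argument. At an interior point $x \in \Omega$, the first step decomposes as follows: if Player II sells ($\theta(x) = 1$), then Player I picks the next position $y \in \ol B_\eps(x)$ to maximize the continuation value and the running payoff is decreased by $\eps$, yielding $\sup_{\ol B_\eps(x)} u_\I^\eps - \eps$; if Player II does not sell ($\theta(x) = 0$), then a fair coin is tossed and the expected continuation value equals the tug-of-war average $\tfrac12\bigl(\sup_{\ol B_\eps(x)} u_\I^\eps + \inf_{\ol B_\eps(x)} u_\I^\eps\bigr)$. Since Player II minimizes, $u_\I^\eps(x)$ equals the minimum of these two quantities.

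Rigorously, the two inequalities are proved separately. For $u_\I^\eps(x) \le \text{RHS}$, fix $\eta > 0$ and an arbitrary strategy $S_\I$ for Player I, and construct $(S_\II, \theta)$ for Player II by letting Player II select $\theta(x)$ according to which of the two alternatives realizes the minimum on the right, then choosing $S_\II(x) \in \ol B_\eps(x)$ nearly attaining $\inf_{\ol B_\eps(x)} u_\I^\eps$, and playing $\eta/2^k$-optimally on the subsequent $k$-th steps. The tower property of conditional expectation and a telescoping argument in $\eta$ yield the bound. The reverse inequality $u_\I^\eps(x) \ge \text{RHS}$ is dual: Player I chooses the first move approximately attaining $\sup_{\ol B_\eps(x)} u_\I^\eps$, and plays $\eta/2^k$-optimally afterwards. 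The same arguments apply to $u_\II^\eps$ after exchanging the roles of $\sup$ and $\inf$.

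The main technical obstacle is the measurable selection needed to define strategies that depend in a Borel-measurable way on the past history, together with the control of the infinite-horizon expectation when the game need not terminate almost surely. These issues are handled exactly as in \cite{MPR1} and \cite{PSSW}; in particular, the negative contribution $-\eps\sum_i \theta(x_0,\ldots,x_i)$ is bounded below by $-\eps\tau$, and the conventional values $\pm\infty$ in the definition of the expectation handle the non-terminating branches without affecting the DPP identity.
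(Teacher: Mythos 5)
Your proposal is correct, and it supplies an argument where the paper gives none: both Lemma~\ref{lem:DPP} and Lemma~\ref{lem:DPP-Om-game} are stated in the paper without proof, implicitly deferring to the DPP machinery of \cite{MPR1}, \cite{MPR2} and \cite{PSSW}. Your key observation --- that the $\Om$-game is literally the $D$-game with $D=\Om$, so $\chi_D\equiv 1$ and the DPP specializes accordingly --- is exactly the intended reading, and your two-inequality sketch with $\eta 2^{-k}$-optimal strategies is the standard argument; the measurability and non-termination issues you flag are genuine but are handled in the cited references, just as the paper assumes.
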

It will be shown in Theorem~\ref{thm:Om-game-value} that this game has a value
$u_\eps=u^\eps_\I=u^\eps_\II$. We start by showing that the value of the game converges to the
unique solution of Jensen's equation $\min\{\Delta_\infty u, \abs{Du}-1\}=0$.

\begin{theorem}
\label{thm:Om-game-to-jensen-sol}
Let $u_\eps$ be the family of values of the $\,\Om$-game for a
Lipschitz continuous boundary data $F$, and let $u$ be the unique solution to \eqref{eq:jensen1} with
$u=F$ on $\partial\Om$. Then
\[
\begin{split}
u_\eps \to u \quad \trm{uniformly in }\ol \Om.
\end{split}
\]
\end{theorem}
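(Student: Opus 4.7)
The plan is to first extract a uniformly convergent subsequence by adapting the asymptotic Arzel\`a--Ascoli argument of Theorem~\ref{thm:D-game-convergence}, then identify the limit as a viscosity solution of Jensen's equation by a DPP-based viscosity analysis, and finally appeal to Jensen's uniqueness theorem to conclude that the whole family converges. For the compactness step, one observes that the proof of Theorem~\ref{thm:D-game-convergence} works verbatim for the $\Omega$-game: the oscillation estimate $\sup_{\overline B_\eps(x)} u_\eps - \inf_{\overline B_\eps(x)} u_\eps \leq 4\max\{\lip(F);1\}\eps$ does not use the restriction that Player II may only sell inside $D$, one simply sets $\chi_D \equiv 1$ throughout. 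Combined with the asymptotic Arzel\`a--Ascoli lemma from \cite{MPR1}, this yields a Lipschitz limit $u$ on $\overline\Omega$ along a subsequence $\eps_k \to 0$.

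For the subsolution property, let $\varphi \in C^2(\Om)$ touch $u$ strictly from above at $x_0 \in \Om$. A standard perturbation produces points $x_{\eps_k} \to x_0$ where $u_{\eps_k} - \varphi$ attains a local maximum. Since $u_{\eps_k}$ satisfies a \emph{min} in the DPP, both alternatives dominate $u_{\eps_k}(x_{\eps_k})$. The bound $u_{\eps_k}(x_{\eps_k}) \leq \sup_{\overline B_{\eps_k}(x_{\eps_k})} u_{\eps_k} - \eps_k$ combined with $u_{\eps_k}(y) - u_{\eps_k}(x_{\eps_k}) \leq \varphi(y) - \varphi(x_{\eps_k})$ near $x_{\eps_k}$ yields
\[
\sup_{y \in \overline B_{\eps_k}(x_{\eps_k})} \bigl(\varphi(y) - \varphi(x_{\eps_k})\bigr) \geq \eps_k ,
\]
so dividing by $\eps_k$ and a Taylor expansion give $\abs{D\varphi(x_0)} \geq 1$. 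The bound $u_{\eps_k}(x_{\eps_k}) \leq \tfrac12 \sup + \tfrac12 \inf$ gives $\sup_{\overline B_{\eps_k}(x_{\eps_k})}(\varphi - \varphi(x_{\eps_k})) + \inf_{\overline B_{\eps_k}(x_{\eps_k})}(\varphi - \varphi(x_{\eps_k})) \geq 0$, which by the second-order Taylor analysis of \cite{PSSW} yields $\Delta_\infty \varphi(x_0) \geq 0$.

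For the supersolution property, let $\phi \in C^2(\Om)$ touch $u$ strictly from below at $x_0$ and pick $x_{\eps_k} \to x_0$ where $u_{\eps_k} - \phi$ attains a local minimum. By the DPP,
\[
u_{\eps_k}(x_{\eps_k}) = \min\Bigl\{\tfrac12 \sup_{\overline B_{\eps_k}(x_{\eps_k})} u_{\eps_k} + \tfrac12 \inf_{\overline B_{\eps_k}(x_{\eps_k})} u_{\eps_k};\; \sup_{\overline B_{\eps_k}(x_{\eps_k})} u_{\eps_k} - \eps_k\Bigr\} .
\]
Passing to a further subsequence, one of the two alternatives realizes the minimum for every $k$. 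If it is the first, then the PSSW-type Taylor expansion of $\phi$ at $x_{\eps_k}$ gives $\Delta_\infty \phi(x_0) \leq 0$. If it is the second, then $\sup_{\overline B_{\eps_k}(x_{\eps_k})} \phi - \phi(x_{\eps_k}) \leq \eps_k + o(\eps_k)$, which forces $\abs{D\phi(x_0)} \leq 1$. In either situation the required inequality $\min\{\Delta_\infty \phi(x_0), \abs{D\phi(x_0)}-1\} \leq 0$ holds.

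For the boundary condition, I would construct cone barriers $y \mapsto F(z) \pm \max\{\lip(F),1\}\abs{y-z}$ for $z \in \partial\Om$; these satisfy the DPP as sub/supersolutions and trap $u_{\eps_k}$ near $\partial\Om$ uniformly in $k$, giving $u = F$ on $\partial\Om$ after passing to the limit. Finally, Jensen's uniqueness theorem \cite{J} identifies the limit as the unique solution of \eqref{eq:jensen1} with boundary data $F$; since every subsequence has a further subsequence converging to the same limit, the whole family $u_\eps$ converges uniformly. The main obstacle is the dichotomy in the supersolution step, where one must be careful to extract a single subsequence on which one alternative in the DPP is always the minimizer, so that the limiting inequality is unambiguous.
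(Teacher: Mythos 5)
Your proposal is correct and follows essentially the same route as the paper: compactness via the oscillation estimate of Theorem~\ref{thm:D-game-convergence} (which indeed applies verbatim with $\chi_D\equiv 1$) together with the asymptotic Arzel\`a--Ascoli lemma, identification of the limit as a viscosity solution of \eqref{eq:jensen1} by combining the DPP with the PSSW-type Taylor expansion, and Jensen's uniqueness theorem to upgrade subsequential to full convergence. The only cosmetic differences are that the paper resolves the supersolution dichotomy by keeping the minimum inside the expansion rather than extracting a subsequence, and obtains the boundary condition by the pulling strategy rather than cone barriers; both variants are standard and sound.
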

\begin{proof}
The convergence of a subsequence to a Lipschitz continuous function $u$ follows by the
same argument as in Theorem~\ref{thm:D-game-convergence}.

By pulling towards a boundary point, we see that $u=F$ on $\partial \Omega$, and we can
focus our attention on showing that $u$ satisfies Jensen's equation in
the viscosity sense. To establish this,  we consider an asymptotic expansion related to our operator.

Fix a point $x\in \Omega$ and $\phi\in C^2(\Om)$.
Let $x_1^\eps$ and $x_2^\eps$ be a minimum point and a maximum point, respectively,
for $\phi$ in $\ol B_\eps(x)$. As in \cite{MPR}, we obtain the asymptotic expansion
\begin{equation}
\label{eq:asymp-jensen}
\begin{split}
&\min\Bigg\{\frac{1}{2}\max_{y\in \ol B_\eps (x)}
\phi (y) + \frac{1}{2}\min_{y\in \ol B_\eps(x)} \phi
(y);\max_{y\in \ol B_\eps (x)}
\phi (y) -\eps\Bigg\}-
\phi(x)\\
&\ge \min\Bigg\{\frac{\eps^2}{2} D^2\phi(x)\left(\tfrac{x_1^{\eps}-x}{\eps}\right)\cdot
\left(\tfrac{x_1^{\eps}-x}{\eps}\right)+o(\eps^2);\\
& \qquad \qquad \left(D \phi(x)\cdot
\tfrac{x_2^\eps-x}{\eps}-1\right)\eps
+\frac{\eps^2}{2}
D^2\phi(x)\left(\tfrac{x_2^{\eps}-x}{\eps}\right)\cdot
\left(\tfrac{x_2^{\eps}-x}{\eps}\right)+o(\eps^2)\Bigg\}.
\end{split}
\end{equation}
Suppose that $u-\phi$ has a strict local minimum at $x$ and that $D \phi(x)\neq 0$.
By the uniform convergence, for any
$\eta_\eps>0$ there exists a sequence
$(x_{\eps})$ converging to $x$ such that
$$
u_{\eps} (x) - \phi (x) \geq u_{\eps} (x_{\eps}) - \phi
(x_{\eps}) -\eta_\eps,
$$
that is, $u_{\eps} - \phi $
has an approximate minimum at $x_{\eps}$.
Moreover, considering $\tilde{\phi}= \phi - u_{\eps}
(x_{\eps}) - \phi (x_{\eps})$, we may assume that $\phi
(x_{\eps}) = u_{\eps} (x_{\eps})$.
 Thus, by recalling the fact that $u_\eps$  satisfies the DPP, we obtain
\[
\begin{split}
 \eta_\eps \geq-\phi (x_{\eps})&+
\min\Bigg\{\frac{1}{2}\max_{y\in \ol B_\eps (x)}
\phi (y) + \frac{1}{2}\min_{y\in \ol B_\eps(x)} \phi
(y);\max_{y\in \ol B_\eps (x)}
\phi (y) -\eps\Bigg\}.
\end{split}
\]
By choosing $\eta_\eps = o(\eps^2)$, using
\eqref{eq:asymp-jensen} and dividing by $\eps^2$, we have
\[
\begin{split}
0&\ge \min\Bigg\{\frac{1}{2} D^2\phi(x)\left(\tfrac{x_1^{\eps}-x}{\eps}\right)\cdot
\left(\tfrac{x_1^{\eps}-x}{\eps}\right)+\frac{o(\eps^2)}{\eps^2};\\
&\left(D \phi(x)\cdot \tfrac{x_2^\eps-x}{\eps}-1\right)\frac1\eps
+\frac{1}{2} D^2\phi(x)\left(\tfrac{x_2^{\eps}-x}{\eps}\right)\cdot
\left(\tfrac{x_2^{\eps}-x}{\eps}\right)+\frac{o(\eps^2)}{\eps^2}\Bigg\}.
\end{split}
\]
Since $D\phi(x)\neq 0$, by letting $\eps \to 0$, we conclude
that
\[
\begin{split}
\Delta_\infty \phi(x)\le 0 \quad \trm{or} \quad \abs{D\phi(x)}-1\le
0.
\end{split}
\]
This shows that $u$ is a viscosity supersolution to Jensen's equation \eqref{eq:jensen1}, provided
that $D \phi(x)\neq 0$. On the other hand, if $D
\phi=0$, then $\Delta_\infty \phi(x)=0$ and the same conclusion follows.

To prove that $u$ is a viscosity subsolution, we
consider a function $\varphi$ that touches $u$ from above at $x\in\Om$
and observe that a reverse inequality to
\eqref{eq:asymp-jensen} holds at the point of touching.
Arguing as above, one can deduce that
\[
\begin{split}
\Delta_\infty \varphi(x)\ge 0 \quad \trm{and} \quad
\abs{D\varphi(x)}-1\ge 0.
\end{split}
\]
To finish the proof, we observe that since the viscosity solution of \eqref{eq:jensen1}
is unique, all the subsequential limits of $(u_\eps)$ are equal.
\end{proof}

Next we prove that the value $u_\eps$ of the $D$-game converges to
the minimal solution of $\min\{\Delta_\infty u,
\abs{Du}-\chi_D\}=0$.
\begin{proof}[Proof of Theorem~\ref{thm:game-gives-minimal-solution}]
Let $h_\eps$, $u_\eps$ and $z_\eps$ denote the values of the standard
tug-of-war, the $D$-game, and the $\Om$-game, respectively.
 Since Player II has more options in $D$-game and again more in $\Om$-game, we have
\begin{equation}
\label{eq:ordered}
\begin{split}
z_\eps(x)\le u_\eps(x) \le h_\eps(x)\quad\text{for all $x\in\Om$}.
\end{split}
\end{equation}

As in the proof of Theorem~\ref{thm:charac}, we denote
by $h$ the unique solution to the infinity Laplace equation,
\[
\mathcal{A}=\{x\in\Om\colon \abs{Dh(x)}<1\},\qquad \mathcal{B}=\mathcal{A}\cap D,
\]
and by $z$ the unique solution to the Jensen's equation \eqref{eq:jensen1}.

We claim that
\[
\begin{split}
u_\eps\to z\quad\trm{in}\quad\mathcal B.
\end{split}
\]
Striving for a contradiction, suppose that there is $x_0\in \mathcal B$ such that
\begin{equation}
\label{eq:counterassumption}
\begin{split}
u_\eps(x_0)-z(x_0)>C
\end{split}
\end{equation}
for all $\eps>0$.
We recall from Theorem~\ref{thm:Om-game-to-jensen-sol} and \cite{PSSW} that
\[
\begin{split}
z_\eps \to z \quad \trm{and}\quad h_\eps\to h \quad \trm{uniformly in }\ol\Om.
\end{split}
\]
Moreover, Theorem
\ref{thm:jensen_is_patch} implies that $z(x)=h(x)$ in
$\Om\setminus\mathcal{A}$ and this together with \eqref{eq:ordered} yields
\begin{equation}
\label{eq:outside-A-infty-harmonic}
\begin{split}
h=z\le u_\eps +o(1)\le z+o(1)=h+o(1)
\end{split}
\end{equation}
in $\Om\setminus\mathcal{A}$ with a uniform error term.

We will next show that
\begin{equation}
\label{eq:delta(x)-large}
\begin{split}
\delta(x_0):=\sup_{y\in \ol B(x_0)} u_\eps(y)-u_\eps(x_0)\ge \eps.
\end{split}
\end{equation}
Indeed, looking at the DPP in Lemma~\ref{lem:DPP}, we have two
alternatives. The first alternative is
\[
\begin{split}
u_\eps(x_0)=\half \Big\{\inf_{y\in\ol B(x_0)} u_\eps(y)+\sup_{y\in \ol B(x_0)}
u_\eps(y)\Big\}<\sup_{y\in \ol B(x_0)} u_\eps(y)-\eps \chi_D(x_0).
\end{split}
\]
Since $x_0\in D$, this implies that
\[
\begin{split}
2 \eps<\sup_{y\in\ol B(x_0)} u_\eps(y)-\inf_{y\in\ol B(x_0)} u_\eps(y)
\end{split}
\]
and
\begin{equation}
\label{eq:up=down}
\begin{split}
\sup_{y\in \ol B(x_0)} u_\eps(y)-u_\eps(x_0)=u_\eps(x_0)-\inf_{y\in\ol B(x_0)} u_\eps(y),
\end{split}
\end{equation}
from which we deduce $\delta(x_0)>\eps$. The second alternative is
\begin{equation}
\label{eq:second-alternative}
\begin{split}
u_\eps(x_0)=\sup_{y\in \ol B(x_0)} u_\eps(y)-\eps
\end{split}
\end{equation}
which implies $\delta(x_0)=\eps$, and the claim \eqref{eq:delta(x)-large} follows.

Let $\eta>0$ and choose a point $x_1$
so that
\[
\begin{split}
u_\eps(x_1) \ge \sup_{y\in \ol B(x_0)} u_\eps(y)-\eta 2^{-1}.
\end{split}
\]
It follows from \eqref{eq:delta(x)-large} that $ u_\eps(x_1)-\inf_{y\in\ol B(x_1)} u_\eps(y)\ge \eps-\eta
2^{-1}$. Moreover, in the case of the first alternative at $x_1$ it holds that $\delta(x_1)
\ge \eps-\eta2^{-1} $ by the equation similar to \eqref{eq:up=down}, and in the case of the second
alternative, the equation similar to \eqref{eq:second-alternative} implies that
$\delta(x_1)=\eps$.

We iterate the argument and obtain a sequence of points $(x_k)$ such that
\begin{equation}
\label{eq:speed-for-u-eps}
\begin{split}
u_\eps(x_k)\ge u_\eps(x_0)+k \eps -\eta\sum_{i=1}^\infty2^{-i}.
\end{split}
\end{equation}
The sequence exits $\mathcal A$ in a finite number of steps, i.e.,
there exists a first point $x_{k_0}$ in the sequence such that $x_{k_0}\in
\Om\setminus \mathcal{A}$. This follows from \eqref{eq:speed-for-u-eps}
and the boundedness of $u_\eps$. On the other hand, since
$\abs{z(x)-z(y)}\le \abs{x-y}$ whenever the line segment $[x,y]$
is contained in $\mathcal{A}$ (see \cite{cgw}), we have
\[
z(x_{k_0}) \le z(x_0)+k\eps+C \eps,
\]
where the term $C\eps$ is due to the last step being partly outside $\mathcal A$.
By this estimate, \eqref{eq:outside-A-infty-harmonic} and \eqref{eq:speed-for-u-eps}, we obtain
\[
o(1)\ge u_\eps(x_{k_0})-z(x_{k_0}) \ge u_\eps(x_0)-z(x_0)-\eta-C\eps.
\]
This gives a contradiction with \eqref{eq:counterassumption} provided
we choose $\eta$ and $\eps$ small enough.

We have
\[
\begin{split}
u_\eps\to z \quad\trm{in}\quad \mathcal A\cap D \quad \trm{and}\quad u_\eps\to h
\quad\trm{in}\quad \Om\setminus \mathcal A.
\end{split}
\]
But in $\mathcal A\setminus D$, the $D$-game is just a tug-of-war,
and by \cite{PSSW}, $u_\eps$ converges to the unique
solution to
\[
\begin{split}
\begin{cases}
\Delta_\infty u=0,&\trm{in }\mathcal A\setminus D \\
u=h,& \trm{on }\partial A\setminus D\\
u=z,&\trm{on }\partial D\cap \mathcal A.
\end{cases}
\end{split}
\]
This ends the proof.
\end{proof}

Now let us show that there is a value for the $\Om$-game.

\begin{theorem}
\label{thm:Om-game-value}
The $\Om$-game has a value, i.e.\ $u^\eps_\I=u^\eps_\II$.
\end{theorem}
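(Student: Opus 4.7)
The plan is as follows. Since $\sup\inf \le \inf\sup$ always holds, the inequality $u^\eps_\I \le u^\eps_\II$ is automatic, so it suffices to prove the reverse. Fix $\eta>0$ and $x_0\in\Om$; the aim is to construct a strategy $S^*_\I$ for Player~I which guarantees expected payoff at least $u^\eps_\II(x_0)-\eta$ against every Player~II response $(S_\II,\theta)$. Coupled with $u^\eps_\I(x_0)=\sup_{S_\I}\inf_{S_\II,\theta}\mathbb{E}^{x_0}_{S_\I,S_\II,\theta}[\ldots]$, this yields $u^\eps_\II(x_0)-\eta \le u^\eps_\I(x_0)$, and sending $\eta\to 0$ gives equality. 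The strategy $S^*_\I$ is the natural ``nearly greedy'' one with respect to $u^\eps_\II$: whenever Player~I is called upon to move at step $k$ (either by winning the coin toss or because Player~II sold the turn), choose $\xi_k\in \ol B_\eps(x_k)$ with
\[
u^\eps_\II(\xi_k)\ \ge\ \sup_{y\in \ol B_\eps(x_k)}u^\eps_\II(y)\ -\ \eta\, 2^{-(k+2)}.
\]

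The heart of the argument is to show that the process
\[
M_k\ :=\ u^\eps_\II(x_k)\ -\ \eps \sum_{i=0}^{k-1}\theta_i\ +\ \eta\bigl(1-2^{-k}\bigr)
\]
is a submartingale under $\mathbb{P}^{x_0}_{S^*_\I,S_\II,\theta}$ for every Player~II strategy $(S_\II,\theta)$. This is a two-case verification using the DPP of Lemma~\ref{lem:DPP-Om-game}. When $\theta_k=0$, the DPP bound $u^\eps_\II(x_k)\le \tfrac12(\sup + \inf)$ on $\ol B_\eps(x_k)$, combined with the $\eta\, 2^{-(k+2)}$ near-optimality of $\xi_k$ and the trivial lower bound $u^\eps_\II(x^{\II}_{k+1})\ge\inf u^\eps_\II$ for whatever point Player~II selects, gives $\mathbb{E}[u^\eps_\II(x_{k+1})\mid\mathcal F_k] \ge u^\eps_\II(x_k) - \eta\, 2^{-(k+3)}$; this loss is dominated by the deterministic increment $\eta\, 2^{-(k+1)}$ of the correction term in $M_k$. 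When $\theta_k=1$, Player~I plays $\xi_k$ deterministically, and the complementary DPP inequality $u^\eps_\II(x_k)\le \sup - \eps$ cancels exactly the $\eps$ cost of the sale, again leaving a positive correction increment. The geometric weights $\eta\, 2^{-(k+2)}$ in the strategy and $\eta(1-2^{-k})$ in $M_k$ have been calibrated precisely so that both cases close simultaneously.

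Once the submartingale property is established, the optional stopping theorem applied at the exit time $\tau$, via a truncation $\tau\wedge N$ combined with the reverse Fatou lemma based on the uniform upper bound $M_k\le\|F\|_{L^\infty(\Gamma_\eps)}+\eta$ (which follows from the a priori bound $u^\eps_\II\le\|F\|_{L^\infty(\Gamma_\eps)}$ obtained by testing with Player~II's always-sell strategy), produces
\[
u^\eps_\II(x_0)\ =\ M_0\ \le\ \mathbb{E}[M_\tau]\ \le\ \mathbb{E}\Big[F(x_\tau) - \eps \sum_{i<\tau}\theta_i\Big] + \eta.
\]
Taking the infimum over $(S_\II,\theta)$ then yields $u^\eps_\II(x_0)-\eta \le u^\eps_\I(x_0)$. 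I expect the main technical obstacle to lie in justifying the passage $\tau\wedge N\to\tau$, for which one needs $\tau<\infty$ almost surely under $\mathbb{P}^{x_0}_{S^*_\I,S_\II,\theta}$; this can be handled by a standard bounded-domain termination argument for tug-of-war in the spirit of \cite{PSSW}, with the additional observation that the selling option in the $\Om$-game can only accelerate rather than delay termination (the always-sell strategy alone terminates in at most $\mathrm{diam}(\Om)/\eps$ steps regardless of $S^*_\I$).
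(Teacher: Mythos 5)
Your overall architecture matches the paper's: the nearly greedy strategy for Player~I with geometrically decaying errors, a submartingale built from $u^\eps_\II$ corrected by the accumulated selling costs, and optional stopping plus Fatou to conclude. The submartingale verification itself is correct. But the step you flag as ``the main technical obstacle'' --- almost sure termination under $S^*_\I$ against \emph{every} Player~II response --- is dismissed with a claim that is false, and this is exactly where the real content of the proof lies. The parenthetical ``the always-sell strategy alone terminates in at most $\mathrm{diam}(\Om)/\eps$ steps regardless of $S^*_\I$'' does not hold: when Player~II sells, it is \emph{Player~I} who moves, and your $S^*_\I$ moves greedily with respect to $u^\eps_\II$, not toward $\Gamma_\eps$, so no distance-to-boundary bound is available. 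More seriously, the infimum runs over all of Player~II's strategies, including never-sell ones, in which case the game is a coin-toss tug-of-war with Player~I greedy and Player~II adversarial; the ``standard bounded-domain termination argument in the spirit of \cite{PSSW}'' does not apply off the shelf there (in \cite{PSSW} termination is tied either to a pull-toward-a-point strategy or to backtracking modifications of the greedy strategy --- precisely because the pure greedy strategy need not terminate when the local oscillation can be small). Note also that if $\P(\tau=\infty)>0$ under some $(S_\II,\theta)$, the paper's convention assigns the value $-\infty$ when evaluating $u^\eps_\I$, so your lower bound collapses entirely without a.s.\ termination.

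What rescues the $\Om$-game, and what your proposal never isolates, is the uniform estimate $\delta(x):=\sup_{y\in\ol B_\eps(x)}u^\eps_\II(y)-u^\eps_\II(x)\ge\eps$ for \emph{all} $x\in\Om$, read off directly from the two alternatives in the DPP of Lemma~\ref{lem:DPP-Om-game} (either $u^\eps_\II=\tfrac12(\sup+\inf)<\sup-\eps$, forcing $\delta>\eps$, or $u^\eps_\II=\sup-\eps$, forcing $\delta=\eps$). With this in hand, every move actually made by Player~I raises $u^\eps_\II(x_k)$ by at least $\eps-\eta$; combining the a.s.\ convergence of the bounded submartingale $u^\eps_\II(x_k)-\eta 2^{-k}$ with the zero-one law (on non-termination there are a.s.\ arbitrarily long runs of consecutive Player~I moves, either through coin-toss wins or through sales) yields a contradiction, hence $\tau<\infty$ a.s. This is the argument the paper gives, and your proof is incomplete without it or a substitute. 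A minor additional point: your a priori bound $u^\eps_\II\le\norm{F}_{L^\infty(\Gamma_\eps)}$ should be obtained by letting Player~II pull toward a fixed boundary point (which terminates the game a.s.), not by the always-sell strategy, which by itself guarantees no termination at all.
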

\begin{proof}
 By definition, $u^\eps_\I\le u^\eps_\II$,
and thus it remains to prove the opposite inequality. Observe that by
pulling towards a boundary point, Player II can end the game almost
surely and thus $u^\eps_\II<\infty$.
 Let
\[
\begin{split}
\delta(x):=\sup_{y\in\ol B_\eps(x)}u^\eps_\II(y) -u^\eps_\II(x).
\end{split}
\]
Suppose that Player I uses a strategy $S^0_\I$,
in which she always chooses to step to a point that almost
maximizes $u^\eps_\II$, that is, to a point $x_k$ such that
\[
u^\eps_\II(x_{k})\ge \sup_{y\in \ol B_{\eps}(x_{k-1})} u^\eps_\II(y)-\eta 2^{-k},
\]
for a fixed $\eta>0$. We claim that $m_k=u^\eps_\II(x_k)-\eta 2^{-k}$ is a submartingale.
Indeed, it follows by the DPP that
\begin{equation}
\label{eq:up-more-than-down}
\begin{split}
u^\eps_\II(x_k)-\inf_{\ol B_\eps (x_k)} u^\eps_\II(y)
\le \sup_{\ol B_\eps(x_k)} u^\eps_\II(y)-u^\eps_\II(x_k)=\delta(x_k),
\end{split}
\end{equation}
and thus
\[
\begin{split}
\mathbb{E}_{S^0_\I,
S_\II,\theta}^{x_0}&[u^\eps_\II(x_k)-\eta 2^{-k}|\,x_0,\ldots,x_{k-1}]\ge u^\eps_\II(x_{k-1})-\eta 2^{-(k-1)}.
\end{split}
\]
From the submartingale property it follows that the limit $\lim_{k\to \infty} m_{\tau\wedge k}$
exists by the martingale convergence theorem.
Furthermore, at every point $x\in \Om$ either
\[
\begin{split}
u^\eps_\II(x)=\half\left\{\inf_{y\in \ol B_\eps(x)} u^\eps_\II(y)+\sup_{y\in \ol B_\eps(x)}
u^\eps_\II(y)\right\}< \sup_{y\in \ol B_\eps(x)} u^\eps_\II(y)-\eps
\end{split}
\]
implying
\begin{equation}
\label{eq:slope-lower-bound-1}
\begin{split}
\eps<\sup_{y\in \ol B_\eps(x)} u^\eps_\II(y)-u^\eps_\II(x),
\end{split}
\end{equation}
or
\[
\begin{split}
u^\eps_\II(x)=\sup_{y\in \ol B_\eps(x)} u^\eps_\II(y)-\eps.
\end{split}
\]
Hence
\begin{equation}
\label{eq:slope-lower-bound-2}
\begin{split}
\sup_{y\in\ol B_\eps(x)} u^\eps_\II(y)-u^\eps_\II(x)=\eps.
\end{split}
\end{equation}
Thus $\delta(x)\ge \eps$ always. On the other hand, there are arbitrary long
sequences of moves made by Player I. Indeed, if Player II
sells a turn, then Player I gets to move, and otherwise this is a
consequence of the zero-one law.  Since $m_k$ is a bounded
submartingale, these two facts imply that the game must end almost surely.

By a similar argument utilizing the fact  $\delta(x)\ge \eps$, we see that
$$u^\eps_\II(x_k)-\eps\sum_{i=0}^{k-1}\theta(x_0,\ldots,x_i)-\eta
2^{-k}$$
is a submartingale as well. It then follows from Fatou's
lemma and the optional stopping theorem that
\[
\begin{split}
u^\eps_\I(x_0)
&= \sup_{S_{\I}}\inf_{S_{\II},\theta}\,\mathbb{E}_{S_{\I},S_{\II},\theta}^{x_0}[F(x_\tau)
-\eps\sum_{i=0}^{\tau-1}\theta(x_0,\ldots,x_i)]\\
&\ge \inf_{S_\II,\theta} \mathbb{E}_{S^0_\I,S_\II,\theta}^{x_0}[F(x_\tau)
-\eps\sum_{i=0}^{\tau-1}\theta(x_0,\ldots,x_i)-\eta 2^{-\tau}]\\
&\ge  \inf_{S_{\II},\theta} \limsup_{k\to\infty}\mathbb{E}_{S^0_\I,
S_\II,\theta}^{x_0}[u^\eps_\II(x_{\tau\wedge k})-\eps\sum_{i=0}^{\tau\wedge k-1}
\theta(x_0,\ldots,x_i)-\eta 2^{-(\tau\wedge k)}]\\
&\ge\inf_{S_{\II},\theta}  \mathbb{E}_{S^0_\I, S_\II,\theta}[u^\eps_\II(x_0)-\eta]=u^\eps_\II(x_0)-\eta.
\end{split}
\]
This implies that $u^\eps_\I\ge u^\eps_\II$.
\end{proof}

Now, let us prove the analogous statement for the $D$-game.

\begin{theorem}
\label{thm:D-game-value}
The $D$-game has a value,  i.e.\ $u^\eps_\I=u^\eps_\II$.
\end{theorem}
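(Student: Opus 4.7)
The proof mirrors that of Theorem~\ref{thm:Om-game-value}. Since $u^\eps_\I\le u^\eps_\II$ is immediate by definition, it suffices to establish the reverse inequality. Fix $\eta>0$ and let $S^0_\I$ be the strategy in which, whenever Player~I has the turn (either by winning the coin toss or because Player~II sells), she moves to a point $x_k\in\ol B_\eps(x_{k-1})$ with
$$u^\eps_\II(x_k)\;\ge\; \sup_{y\in \ol B_\eps(x_{k-1})}u^\eps_\II(y)-\eta 2^{-k}.$$
The central quantity is
$$M_k \;=\; u^\eps_\II(x_k)\;-\;\eps\sum_{i=0}^{k-1}\theta(x_0,\ldots,x_i)\;-\;\eta 2^{-k},$$
which I claim is a bounded submartingale under $\mathbb{P}^{x_0}_{S^0_\I,S_\II,\theta}$ for every $S_\II$ and every decision variable $\theta$.

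The submartingale property follows from a case analysis on $\theta(x_0,\ldots,x_{k-1})$. If it equals $0$ (Player~II plays tug-of-war), the near-sup strategy gives $\mathbb{E}[u^\eps_\II(x_k)\mid\text{past}]\ge \tfrac12(\sup+\inf)-\tfrac12\eta 2^{-k}$, while the DPP of Lemma~\ref{lem:DPP} yields $u^\eps_\II(x_{k-1})\le \tfrac12(\sup+\inf)$ whether $x_{k-1}\in D$ or not; note in particular that when $x_{k-1}\notin D$ the DPP is a pure tug-of-war equation and this inequality is an equality. If the decision equals $1$ (Player~II sells, which forces $x_{k-1}\in D$), then Player~I moves and $\mathbb{E}[u^\eps_\II(x_k)\mid\text{past}]\ge \sup-\eta 2^{-k}$, while the DPP at $x_{k-1}\in D$ yields $u^\eps_\II(x_{k-1})\le \sup-\eps$. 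In both cases one obtains $\mathbb{E}[M_k-M_{k-1}\mid\text{past}]\ge 0$.

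The main obstacle is to verify that the game terminates almost surely under $(S^0_\I, S_\II,\theta)$. In Theorem~\ref{thm:Om-game-value} the crucial ingredient was the lower bound $\delta(x) := \sup_{\ol B_\eps(x)}u^\eps_\II-u^\eps_\II(x)\ge\eps$ valid for every $x\in\Om$, whereas here this bound is only available on $D$. To resolve this, I would combine the $\Om$-game argument inside $D$ (where $\delta\ge\eps$ forces each Player~I move to increase $u^\eps_\II$ by at least $\eps-\eta 2^{-k}$) with the termination analysis for pure tug-of-war from \cite{PSSW} governing the behavior on excursions into $\Om\setminus D$. Moreover, by the convention preceding Lemma~\ref{lem:DPP}, any strategy $(S_\II,\theta)$ under which $\{\tau=\infty\}$ has positive probability contributes $+\infty$ to $u^\eps_\II(x_0)=\inf_{S_\II,\theta}\sup_{S_\I}\mathbb{E}[\cdots]$, so such strategies are irrelevant for the infimum and one may restrict attention to strategy pairs for which $\tau<\infty$ a.s.

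Once termination is established, the optional stopping theorem applied to the bounded submartingale $M_{k\wedge\tau}$, together with $u^\eps_\II(x_\tau)=F(x_\tau)$ on $\Gamma_\eps$, yields
$$\mathbb{E}^{x_0}_{S^0_\I,S_\II,\theta}\Big[F(x_\tau)-\eps\sum_{i=0}^{\tau-1}\theta(x_0,\ldots,x_i)\Big]\;\ge\; \mathbb{E}^{x_0}_{S^0_\I,S_\II,\theta}[M_\tau]\;\ge\; M_0 \;=\; u^\eps_\II(x_0)-\eta$$
for every $(S_\II,\theta)$. Taking the infimum over $(S_\II,\theta)$ and then the supremum over $S_\I$ produces $u^\eps_\I(x_0)\ge u^\eps_\II(x_0)-\eta$, and letting $\eta\to 0$ completes the proof.
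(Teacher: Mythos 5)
Your overall outline (near-sup strategy, the submartingale $M_k$, optional stopping) is the right skeleton and matches the first half of the paper's argument, and your submartingale verification is fine. But the termination step --- which you correctly identify as ``the main obstacle'' --- is precisely where the proof of the $D$-game differs from that of the $\Om$-game, and your two proposed ways around it do not work. First, the appeal to the convention on non-terminating games is based on a misreading: the convention is \emph{asymmetric}. In the chain $u^\eps_\I(x_0)\ge \inf_{S_\II,\theta}\mathbb{E}^{x_0}_{S^0_\I,S_\II,\theta}[\cdots]$ the expectation is being used to evaluate $u^\eps_\I$, and for that value the undefined expectation is set to $-\infty$, not $+\infty$. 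So if Player II can choose $(S_\II,\theta)$ that prevents almost sure termination against your fixed $S^0_\I$, the infimum on the right is $-\infty$ and the bound $u^\eps_\I\ge u^\eps_\II-\eta$ collapses. Non-terminating strategy pairs penalize Player I here, so they cannot be discarded; it is Player I's strategy that must \emph{guarantee} termination against every $(S_\II,\theta)$.

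Second, the naive near-sup strategy does not guarantee this. In the $\Om$-game termination follows because $\delta(x)=\sup_{\ol B_\eps(x)}u^\eps_\II-u^\eps_\II(x)\ge\eps$ everywhere, so each move by Player I pushes the bounded submartingale up by a definite amount. In the $D$-game the DPP outside $D$ is the pure tug-of-war equation and $\delta$ may be arbitrarily small or zero there, so Player I's near-sup moves need not make any quantifiable progress and boundedness of $M_k$ yields nothing. ``Combining with the termination analysis of \cite{PSSW}'' is not a matter of analysis alone: the termination device in Theorem 2.2 of \cite{PSSW} is a \emph{different strategy} (backtracking), and this is exactly what the paper uses. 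One fixes $\delta_0=\min\{\delta(x_0),\eps\}/2$, sets $X_0=\{\delta>\delta_0\}$ (which contains $D$), has Player I play near-sup inside $X_0$ and backtrack toward the last visited point $y_k$ of $X_0$ when outside, and replaces $M_k$ outside $X_0$ by $u^\eps_\II(y_k)-\delta_0 d_k-\eta 2^{-k}$, where $d_k$ is the step-distance back to $y_k$. This modified process gains at least $\delta_0$ on every Player I win, wherever the token is, which is what forces almost sure termination; a separate argument (pulling toward the boundary) handles the degenerate case $\delta(x_0)=0$. Without some such modification of the strategy your proof has a genuine gap.
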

\begin{proof}
The proof is quite similar to the proof of
Theorem~\ref{thm:Om-game-value}, but when tug-of-war is played
outside $D$ we have to make sure that
\[
\begin{split}
\delta(x)=\sup_{y\in\ol B_\eps(x)} u^\eps_\II(y) -u^\eps_\II(x)
\end{split}
\]
is large enough. This is done by using the backtracking strategy,
cf. Theorem 2.2 of \cite{PSSW}.

Fix $\eta>0$ and a starting point $x_0\in \Om$, and set $\delta_0
=\min\{\delta(x_0),\eps\}/2$. We suppose for now that $\delta_0>0$, and
define
\[
\begin{split}
X_0=\Big\{x\in \Om\,:\, \delta(x)> \delta_0\Big\}.
\end{split}
\]
Observe that $D\subset X_0$ by estimates similar to \eqref{eq:slope-lower-bound-1}
and \eqref{eq:slope-lower-bound-2}.

We consider a strategy $S_I^0$ for Player I that distinguishes between the cases $x_k\in X_0$
and $x_k\notin X_0$. First, if $x_k\in X_0$, then she always chooses to step to a point
$x_{k+1}$ satisfying
\[
u^\eps_\II(x_{k+1})\ge \sup_{y\in \ol B_{\eps}(x_{k})} u^\eps_\II(y)-\eta_{k+1} 2^{-(k+1)},
\]
where $\eta_{k+1}\in (0,\eta]$ is small enough to guarantee that $x_{k+1}\in X_0$.
Thus if $x_k\in X_0$ and Player I gets to choose the next position (by winning the coin toss or
 through the selling of the turn by the other player), for
 $$m_k=u^\eps_\II(x_k)-\eta 2^{-k}$$ it holds that
 \[
 \begin{split}
 m_{k+1}&\ge u^\eps_\II(x_k)+\delta(x_k)-\eta_{k+1} 2^{-(k+1)}-\eta 2^{-(k+1)}\\
 &\ge u_\II^\eps(x_k)+\delta(x_k)-\eta 2^{-k}\\
 &=m_k+\delta(x_k).
 \end{split}
 \]
On the other hand, if Player II wins the toss and moves from $x_k\in X_0$ to $x_{k+1}\in X_0$,  it
holds, in view of \eqref{eq:up-more-than-down}, that
\[
\begin{split}
m_{k+1}\ge u^\eps_\II(x_k)-\delta(x_k)-\eta 2^{-(k+1)}>m_k-\delta(x_k). 
\end{split}
\]

In the case $x_k\notin X_0$, we set
\[
\begin{split}
m_k=u^\eps_\II(y_k)-\delta_0 d_k-\eta 2^{-k},
\end{split}
\]
where $y_k$ denotes the last game position in $X_0$ up to time $k$, and $d_k$ is the distance, measured in
number of steps, from $x_k$ to $y_k$ along the graph spanned by the previous points
$y_k=x_{k-j},x_{k-j+1},\ldots,x_k$ that were used to get from $y_k$ to $x_k$. The strategy for Player I
in this case is to backtrack to $y_k$,
that is, if she wins the coin toss, she moves the token to one of the points
$x_{k-j},x_{k-j+1},\ldots,x_{k-1}$ closer to $y_k$ so that $d_{k+1}= d_k-1$. Thus if Player I wins and
$x_k\notin X_0$ (whether $x_{k+1}\in X_0$ or not),
\[
\begin{split}
m_{k+1}\ge \delta_0+m_k.
\end{split}
\]

\sloppy To prove the desired submartingale property for $m_k$, there are three more cases to be checked.
If Player II wins the toss and he moves 
to a point $x_{k+1}\notin X_0$ (whether  $x_k\in X_0$ or not), it holds that
\[
\begin{split}
m_{k+1}&= u_\II^\eps(y_k)-d_{k+1} \delta_0-\eta 2^{-(k+1)}\\
&\ge u_\II^\eps(y_k)-d_{k} \delta_0-\delta_0-\eta 2^{-k}\\
&=m_k-\delta_0 .
\end{split}
\]
If Player II wins the coin toss and moves from $x_k\notin X_0$ to $x_{k+1}\in X_0$, then
\[
\begin{split}
m_{k+1}=u^\eps_\II(x_{k+1})-\eta 2^{-(k+1)}\ge -\delta(x_k)+u^\eps_\II(x_k)-\eta 2^{-k}\ge -\delta_0+m_k
\end{split}
\]
where the first inequality is due to \eqref{eq:up-more-than-down}, and the second follows from the fact
$m_k=u^\eps_\II(y_k)-d_k\delta_0-\eta 2^{-k}\le u^\eps_\II(x_k)-\eta 2^{-k}$.

Taking into account all the different cases, we see that $m_k$
is a bounded (from above) submartingale, and since Player I can assure that
$m_{k+1}\ge m_k+\delta_0$ if she wins a coin toss, the game must
again terminate almost surely. We can now conclude the proof similarly as
in the case of Theorem~\ref{thm:Om-game-value}; recall that $\delta(x_k)\ge \eps$ whenever $x_k$ in $D$.

Finally, let us remove the assumption that $\delta(x_0)>0$.
If $\delta(x_0)=0$ for $x_0\in X$, then
Player I adopts a strategy of pulling towards a boundary point
until the game token reaches
a point $x_0'$ such that $\delta(x_0')>0$ or $x_0'$ is outside
$\Om$. It holds that $u^\eps_\II(x_0)= u^\eps_\II(x_0')$, because by \eqref{eq:up-more-than-down} 
it cannot happen that
$\delta(x)=\sup_{y\in \ol B_\eps(x)} u^\eps_\II(y)-u^\eps_\II(x)=0$ and
$u^\eps_\II(x)-\inf_{y\in \ol B_\eps(x)} u^\eps_\II(y)>0$ simultaneously.
Thus we can repeat the proof also in this case.
\end{proof}


\section{$L^\infty$-viscosity solutions}

In this section, we outline another approach to the problem \eqref{bvp:grad_constraint}
\[
\begin{split}
\begin{cases}
 \min\{\Delta_\infty u, \abs{Du}-\chi_D\}=0&\text{in $\Om$}\\
u=f&\text{on $\partial\Om$}.
\end{cases}
\end{split}
\]
The idea is to regard $\chi_D$ as a bounded, measurable function, defined
only up to a set of measure zero, and to accommodate the set of test-functions to
this interpretation. This point of view fits well with the approximation
of \eqref{bvp:grad_constraint} by the equations $\Delta_p u_p=\chi_D$, but it turns
out to be incompatible with the game approach at least in some cases.

\subsection{The approximating
$p$-Laplace equations}

We begin by recalling the definition of $L^\infty$-viscosity solutions for the approximating
$p$-Laplace equations.
For simplicity, we consider only the equation
\begin{equation}\label{eq:plap}
 \Delta_p u=\chi_D,
\end{equation}
 and leave the more general version $\Delta_p u=g$, with $g$
non-negative and bounded, to the reader. As before, the boundary
conditions are understood in the classical sense. For more on
$L^\infty$-viscosity solutions, see e.g.~\cite{ccks}.

\begin{definition}\label{def:linfty_viscosol.p}
A continuous function $u\colon\Om\to\R$ is an
\emph{$L^\infty$-viscosity subsolution} of \eqref{eq:plap} if, whenever
$ x \in\Om$ and $\varphi\in W^{2,\infty}_{loc}(\Om)$ are such that
$u-\varphi$ has a strict local maximum at $ x$, then
$$
\esslimsup_{y\to x} \left(\Delta_p \varphi
(y)-\chi_{D}(y)\right)\ge 0.
$$

A continuous function $v\colon\Om\to\R$ is an
\emph{$L^\infty$-viscosity supersolution} of \eqref{eq:plap}
if, whenever $ x\in\Om$ and
$\phi\in W^{2,\infty}_{loc}(\Om)$ are such that $v-\phi$ has
a strict local minimum at $ x$, then
$$
\essliminf_{y\to  x} \left(\Delta_p \phi(y)-\chi_{D}(y)\right)\le 0.
$$

Finally, a continuous function $h\colon\Om\to\R$ is an
\emph{$L^\infty$-viscosity solution} of \eqref{eq:plap}
if it is both a viscosity subsolution and a viscosity
supersolution.
\end{definition}

\begin{proposition} \label{weak.implies.linfty.viscosity}
A continuous weak solution of \eqref{eq:plap} is an
$L^\infty$-viscosity solution.
\end{proposition}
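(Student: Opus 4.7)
The proof of Proposition \ref{weak.implies.linfty.viscosity} will follow the same blueprint as Proposition \ref{weak.implies.viscosity}, but with two adjustments to accommodate the fact that the test functions now live in $W^{2,\infty}_{loc}(\Om)$ rather than $C^2(\Om)$, and that the inequalities are formulated via $\operatorname{ess\,liminf}$ and $\operatorname{ess\,limsup}$. I will treat only the supersolution case in detail; the subsolution case is symmetric.

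The plan is to argue by contradiction. Fix a continuous weak solution $u$ and suppose there exist $x\in\Om$, a test function $\phi\in W^{2,\infty}_{loc}(\Om)$ with $u-\phi$ attaining a strict local minimum at $x$, and a number $\delta>0$ such that
\[
\essliminf_{y\to x}\left(\Delta_p\phi(y)-\chi_D(y)\right)>2\delta.
\]
By the definition of $\operatorname{ess\,liminf}$, we can choose $r>0$ small enough so that $B_r(x)\Subset\Om$, the strict minimum is still realized, and
\[
\Delta_p\phi(y)\ge \chi_D(y)+\delta\qquad\text{for almost every }y\in B_r(x).
\]
Set $m=\inf_{|y-x|=r}(u-\phi)(y)>0$ and define $\psi=\phi+m/2$, so that $\psi(x)>u(x)$ while $\psi\le u$ on $\partial B_r(x)$.

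Since $\phi\in W^{2,\infty}_{loc}(\Om)$, the vector field $|D\psi|^{p-2}D\psi=|D\phi|^{p-2}D\phi$ lies in $W^{1,\infty}_{loc}(\Om)$, so the pointwise identity $\diver(|D\psi|^{p-2}D\psi)=\Delta_p\psi$ holds almost everywhere and the divergence theorem applies. For any non-negative $\varphi\in C_0^\infty(B_r(x))$, integrating by parts yields
\[
-\int_{B_r(x)}|D\psi|^{p-2}D\psi\cdot D\varphi\,dx=\int_{B_r(x)}\Delta_p\psi\,\varphi\,dx\ge\int_{B_r(x)}\chi_D\varphi\,dx,
\]
so $\psi$ is a weak subsolution of $\Delta_p v=\chi_D$ in $B_r(x)$. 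Since $u$ is a weak solution and $\psi\le u$ on $\partial B_r(x)$, the standard comparison principle for weak (super/sub)solutions of the $p$-Laplace equation gives $\psi\le u$ throughout $B_r(x)$, contradicting $\psi(x)>u(x)$.

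The only delicate point is ensuring that the $W^{2,\infty}_{loc}$ regularity of the test function is enough to legitimately run the integration-by-parts/comparison argument. This is exactly where the choice of $W^{2,\infty}_{loc}$ in Definition \ref{def:linfty_viscosol.p} pays off: it guarantees that $|D\phi|^{p-2}D\phi$ is Lipschitz on compact subsets, so its distributional divergence coincides with $\Delta_p\phi$ as an $L^\infty$ function, and the comparison principle for weak solutions of $\Delta_p v=g$ with $g\in L^\infty$ applies without modification. Once this technicality is in place, the argument for the subsolution case is obtained by reversing inequalities, replacing $\operatorname{ess\,liminf}$ by $\operatorname{ess\,limsup}$, and shifting $\phi$ downward.
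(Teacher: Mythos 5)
Your argument is correct and is exactly the route the paper takes: its own ``proof'' merely remarks that one repeats the comparison argument of Proposition \ref{weak.implies.viscosity}, noting that the contradiction hypothesis now holds only almost everywhere, which is harmless because the argument runs through weak (integrated) formulations. You have simply written out the details the paper leaves to the reader, including the observation that $W^{2,\infty}_{loc}$ regularity of the test function suffices for the integration by parts.
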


\begin{proof}The proof is almost the same as that of Proposition \ref{weak.implies.viscosity}.
The difference is that the counter proposition holds almost everywhere. Nonetheless,
the proof utilizes weak solutions and thus a set of measure zero makes no difference.
We leave the details to the reader.
\end{proof}

\subsection{The gradient constraint problem}
\begin{definition}\label{def:linfty_viscosol.infty}
A continuous function $u\colon\Om\to\R$ is an
\emph{$L^\infty$-viscosity subsolution} of \eqref{eq:grad_constraint} if,
 whenever $ x \in\Om$ and
$\varphi\in W^{2,\infty}_{loc}(\Om)$ are such that $u-\varphi$ has
a strict local maximum at $ x$, then
$$
\esslimsup_{y\to x} \Big(\min\{\Delta_\infty \varphi (y),\abs{D\varphi(y)}-\chi_{D}(y)\}\Big)\ge 0.
$$

A continuous function $v\colon\Om\to\R$ is an
\emph{$L^\infty$-viscosity supersolution} of \eqref{eq:grad_constraint}
if, whenever $ x\in\Om$ and
$\phi\in W^{2,\infty}_{loc}(\Om)$ are such that $v-\phi$ has
a strict local minimum at $ x$, then
$$
\essliminf_{y\to  x} \Big(\min\{\Delta_\infty \phi (y),\abs{D\phi(y)}-\chi_{D}(y)\}\Big)\le 0.
$$

Finally, a continuous function $h\colon\Om\to\R$ is an
\emph{$L^\infty$-viscosity solution} of \eqref{eq:grad_constraint}
if it is both a viscosity subsolution and a viscosity
supersolution.
\end{definition}

Our next result says that $L^\infty$-viscosity solutions are
viscosity solutions (in the sense of Definition
\ref{def:viscosol}). This holds for subsolutions and
supersolutions as well. However, the converse is not true, as explained
after Corollary~\ref{cor:Linfty-uniquness}.
\begin{lemma}\label{lem:linfty_is_visco}
An $L^\infty$-viscosity subsolution $u$ to
\eqref{bvp:grad_constraint} is a viscosity subsolution. Similarly,
an $L^\infty$-viscosity supersolution $v$ to
\eqref{bvp:grad_constraint} is a viscosity supersolution.
\end{lemma}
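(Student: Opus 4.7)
The plan is to observe that every $C^2$ test function automatically lies in $W^{2,\infty}_{\mathrm{loc}}(\Om)$, so the $L^\infty$-viscosity inequality can be applied to it, and then to interpret the essential limsup/liminf in the relevant direction using the regularity of the smooth test function. The entire argument reduces to comparing $\essliminf_{y\to x}\chi_D(y)$ with $\chi_{\inter D}(x)$ (for subsolutions) and $\esslimsup_{y\to x}\chi_D(y)$ with $\chi_{\overline D}(x)$ (for supersolutions).

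For the subsolution case, I would start with a $C^2$ function $\varphi$ such that $u-\varphi$ has a strict local maximum at $x$. Since $\varphi \in W^{2,\infty}_{\mathrm{loc}}(\Om)$, the $L^\infty$-viscosity subsolution property gives
\[
\esslimsup_{y\to x}\min\{\Delta_\infty\varphi(y),\, |D\varphi(y)|-\chi_D(y)\}\ge 0.
\]
Because $\Delta_\infty\varphi$ and $|D\varphi|$ are continuous at $x$, the essential limsup passes inside the $\min$, giving
\[
\min\{\Delta_\infty\varphi(x),\, |D\varphi(x)|-\essliminf_{y\to x}\chi_D(y)\}\ge 0.
\]
Next I would use the elementary fact that $\essliminf_{y\to x}\chi_D(y)\ge \chi_{\inter D}(x)$: if $x\in\inter D$, then $\chi_D\equiv 1$ on some $B_r(x)\subset D$, so the essliminf equals $1$; otherwise the right side is $0$ and the inequality is trivial. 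Combining these two facts yields
\[
\min\{\Delta_\infty\varphi(x),\, |D\varphi(x)|-\chi_{\inter D}(x)\}\ge 0,
\]
which is the standard viscosity subsolution inequality \eqref{subineqs}.

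The supersolution case is entirely symmetric, with $\esslimsup_{y\to x}\chi_D(y)\le \chi_{\overline D}(x)$ (by the same argument applied to the complement of $\overline D$), and the interchange of $\essliminf$ with $\min$ justified again by continuity of $\Delta_\infty\phi$ and $|D\phi|$. The main technical point worth spelling out is the commutation of $\esslimsup$ with $\min$ against a sum of a continuous function and a bounded measurable function; this is routine but is the only place where one must be attentive. Everything else is a direct comparison of the pointwise indicator $\chi_{\inter D}$ (resp.\ $\chi_{\overline D}$) with its essential counterpart.
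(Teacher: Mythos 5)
Your proposal is correct and follows essentially the same route as the paper: apply the $L^\infty$-viscosity inequality to the $C^2$ test function (which lies in $W^{2,\infty}_{\mathrm{loc}}$), use the continuity of $\Delta_\infty\varphi$ and $|D\varphi|$ to pull the essential limsup/liminf onto $\chi_D$ alone, and conclude via $\essliminf_{y\to x}\chi_D(y)\ge \chi_{\inter D}(x)$ and $\esslimsup_{y\to x}\chi_D(y)\le \chi_{\overline D}(x)$. The paper organizes this as a short case analysis (e.g.\ assuming $\Delta_\infty\phi(x)>0$ in the supersolution case), while you phrase it as a single inequality chain, but the substance is identical.
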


\begin{proof} Let us first prove the claim about subsolutions. Let $\varphi\in C^2(\Om)$ and
$ x\in\Om$ be such that $u-\varphi$ has a strict local maximum at
$ x$. We want to show that
\begin{equation}\label{what_to_prove}
\Delta_\infty \varphi( x)\ge 0\quad\text{and}\quad  \abs{D\varphi( x)}-\chi_{\inter D}( x)\ge 0.
\end{equation}

Since $u$ is an $L^\infty$-viscosity subsolution to
\eqref{bvp:grad_constraint}, we have
\begin{equation}\label{eq:esslimsup_ineq}
\esslimsup_{y\to x} \Big(\min\{\Delta_\infty \varphi (y),\abs{D\varphi(y)}-\chi_{D}(y)\}\Big)\ge 0.
\end{equation}
Observe that since the map $y\mapsto \Delta_\infty \varphi (y)$ is
continuous, we immediately have $\Delta_\infty \varphi( x)\ge 0$.
If $ x\in\inter D$, then also $y\mapsto \chi_D(y)$ is
continuous in a neighborhood of $ x$, and
\eqref{eq:esslimsup_ineq} implies \eqref{what_to_prove} as desired.
On the other hand, if $ x\not\in\inter D$, then $\abs{D\varphi( x)}-\chi_{\inter D}( x)\ge 0$
holds trivially, and we are done.

To prove the supersolution case, let $\phi\in
C^2(\Om)$ and $ x\in\Om$ be such that $u-\phi$ has a strict
local minimum at $ x$. We want to show that
\begin{equation}\label{what_to_prove2}
\Delta_\infty \phi( x)\le 0\quad\text{or}\quad  \abs{D\phi( x)}-\chi_{\overline D}( x)\le 0.
\end{equation}

Since $u$ is an $L^\infty$-viscosity supersolution to
\eqref{bvp:grad_constraint}, we have
\begin{equation}\label{eq:essliminf_ineq}
\essliminf_{y\to x} \Big(\min\{\Delta_\infty \phi (y),\abs{D\phi(y)}-\chi_{D}(y)\}\Big)\le 0.
\end{equation}
Let us suppose that $\Delta_\infty \phi( x)> 0$. Then $\abs{D\phi( x)}>0$, and we must
have $ x\in\overline D$, for otherwise we would contradict \eqref{eq:essliminf_ineq}. In fact,
\eqref{eq:essliminf_ineq} implies that
\[
\abs{D\phi( x)}\le \esslimsup_{y\to x} \chi_{D}(y) \le 1 = \chi_{\overline D}( x),
\]
which completes the proof.
\end{proof}

Lemma \ref{lem:linfty_is_visco} implies that
if \eqref{bvp:grad_constraint} has a unique viscosity solution, then it also has a unique
$L^\infty$-viscosity solution. In particular, we have

\begin{corollary}
\label{cor:Linfty-uniquness}
Suppose that $\overline D=\overline{\inter D}$. Then \eqref{bvp:grad_constraint} has a unique
$L^\infty$-viscosity solution.
\end{corollary}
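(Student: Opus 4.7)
The plan is to prove uniqueness and existence separately. Uniqueness is short: if $u_1$ and $u_2$ are two $L^\infty$-viscosity solutions of \eqref{bvp:grad_constraint} with the same boundary data $f$, then by Lemma~\ref{lem:linfty_is_visco} both are (standard) viscosity sub- and supersolutions, hence viscosity solutions in the sense of Definition~\ref{def:viscosol}. Since $\overline D = \overline{\inter D}$, Theorem~\ref{thm:unique1} forces $u_1 = u_2$.

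For existence, I would first assume $f$ is Lipschitz and build an $L^\infty$-viscosity solution by $p$-Laplace approximation. Let $u_p$ be the unique continuous weak solution to $\Delta_p u = \chi_D$ in $\Omega$ with boundary data $f$ (Lemma~\ref{lemma.exis.p}). By Proposition~\ref{weak.implies.linfty.viscosity} each $u_p$ is an $L^\infty$-viscosity solution of $\Delta_p u = \chi_D$, and Lemma~\ref{uniform_convergence} extracts a subsequence converging uniformly in $\overline\Omega$ to some $u_\infty$ satisfying $u_\infty = f$ on $\partial \Omega$. I would then check that $u_\infty$ is an $L^\infty$-viscosity solution of \eqref{bvp:grad_constraint} by adapting the argument of Theorem~\ref{limit_is_visco}: given $\varphi \in W^{2,\infty}_{loc}(\Omega)$ touching $u_\infty$ strictly from above at $x$, produce $x_p \to x$ where $u_p - \varphi$ attains a local maximum; the $L^\infty$-viscosity subsolution inequality for $u_p$ gives
$$
\esslimsup_{y \to x_p} \bigl( \Delta_p \varphi(y) - \chi_D(y) \bigr) \geq 0,
$$
and the same algebraic manipulations as in Theorem~\ref{limit_is_visco} (dividing by $(p-2)|D\varphi|^{p-4}$ where well defined, and exploiting $|D\varphi| > 1$ on $\overline D$ when relevant) carry over to yield the desired $\esslimsup$-inequality at $x$. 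The supersolution case is symmetric and uses the same non-degeneracy trick as in Theorem~\ref{limit_is_visco}.

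The main obstacle is the interchange of $\esslimsup_{y \to x_p}$ with $\lim_{p \to \infty}$, needed because $\Delta_\infty \varphi$ and $\Delta \varphi$ are defined only a.e.\ when $\varphi \in W^{2,\infty}_{loc}$. One resolves this by a diagonal extraction: for each $p$ the inequality $\Delta_p \varphi(y) \geq \chi_D(y)$ holds off a null set $N_p$; the countable union $N = \bigcup_p N_p$ is still null, and the algebraic passage to the limit can be performed pointwise off $N$, producing an $\esslimsup$-inequality at $x$. To remove the Lipschitz assumption on $f$, take Lipschitz $f_j \to f$ uniformly on $\partial \Omega$ with corresponding $L^\infty$-viscosity solutions $u_j$; since each $u_j$ is in particular a viscosity subsolution of $\Delta_\infty u = 0$, the family $(u_j)$ is locally Lipschitz with uniform constants, and the sandwich $z_j \leq u_j \leq h_j$ (as in the proof of Theorem~\ref{thm:gen_existence}) pins down the boundary values of a locally uniform subsequential limit $u$, which is then an $L^\infty$-viscosity solution by standard stability under uniform convergence.
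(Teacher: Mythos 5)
Your proposal is correct and follows essentially the same route as the paper: the paper's proof is exactly your uniqueness argument (Lemma~\ref{lem:linfty_is_visco} reduces to the standard viscosity setting, where Theorem~\ref{thm:unique1} applies) together with a citation of Lemma~\ref{lem:limit_is_linfty_visco} for existence via $p$-Laplace approximation. The existence argument you sketch, including the handling of the a.e.-defined $\Delta_\infty\varphi$ on a fixed ball, is precisely the content of that lemma, which the paper proves by the same contradiction-on-a-ball device rather than by restating the pointwise division of Theorem~\ref{limit_is_visco}.
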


\begin{proof}
By Theorem \ref{thm:unique1}, \eqref{bvp:grad_constraint} has a unique
viscosity solution. But by Lemma \ref{lem:linfty_is_visco},
any  $L^\infty$-viscosity solution is a viscosity solution, and thus there can be at most one.
The existence of an $L^\infty$-viscosity solution follows from $L^p$ approximation, see
Lemma \ref{lem:limit_is_linfty_visco} below.
\end{proof}

It is quite obvious that the uniqueness for $L^\infty$-viscosity
solutions holds in certain cases where there are several viscosity
solutions to the problem \eqref{bvp:grad_constraint}. For example,
if $\abs{D}=0$, then $u$ is an $L^\infty$-viscosity solution to
\eqref{bvp:grad_constraint} if and only if it is a solution to the infinity Laplace equation.
On the other hand,
Lemma \ref{lem:empty_interior_gen_bdr_values} shows that if
$\abs{D}=0$ and $\lip(f,\partial\Om)<1$, then there are multiple
viscosity solutions to \eqref{bvp:grad_constraint}. This also
shows that a viscosity solution need not be an $L^\infty$-viscosity solution.

More generally, by mimicking the proof of Theorem \ref{thm:charac}, one can prove
the following

\begin{theorem} Let $D\subset\Om$ be the set in \eqref{bvp:grad_constraint} and suppose that
there exists $D'\subset\Om$ for which
$\overline{D'}=\overline{\inter D'}$ and the symmetric difference
$$
D\bigtriangleup D' = (D\setminus D')\cup (D'\setminus D)
$$
has measure zero. Then the problem \eqref{bvp:grad_constraint} has a
unique $L^\infty$-viscosity solution.
\end{theorem}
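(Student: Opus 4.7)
The plan is to reduce the problem with $D$ to the already-settled case with $D'$ by observing that the $L^\infty$-viscosity notion sees only the measure-theoretic structure of $\chi_D$. First I would establish the basic fact that if $g_1 = g_2$ almost everywhere, then $\esslimsup_{y\to x} g_1(y) = \esslimsup_{y\to x} g_2(y)$ and similarly for the liminf, at every point $x\in\Om$. Applied to $g_1=\chi_D$ and $g_2=\chi_{D'}$ (which agree off the null set $D\bigtriangleup D'$), this gives, at every $x\in\Om$ and for every test-function $\phi\in W^{2,\infty}_{loc}(\Om)$,
\[
\esslimsup_{y\to x} \min\{\Delta_\infty \phi(y), |D\phi(y)|-\chi_D(y)\}
= \esslimsup_{y\to x} \min\{\Delta_\infty \phi(y), |D\phi(y)|-\chi_{D'}(y)\},
\]
and the analogous identity for $\essliminf$.

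Consequently, in view of Definition~\ref{def:linfty_viscosol.infty}, a continuous $u\colon\Om\to\R$ is an $L^\infty$-viscosity sub/supersolution of \eqref{bvp:grad_constraint} with the set $D$ if and only if it is one with the set $D'$. Thus the set of $L^\infty$-viscosity solutions of the two problems (with the same boundary data $f$) coincide. Since $\overline{D'}=\overline{\inter D'}$, Corollary~\ref{cor:Linfty-uniquness} ensures that the problem with $D'$ has a unique $L^\infty$-viscosity solution, and therefore the problem with $D$ inherits this uniqueness. Existence is free: an $L^\infty$-viscosity solution of the $D'$-problem exists by the $L^p$-approximation argument, and it is automatically an $L^\infty$-viscosity solution of the $D$-problem by the equivalence just established.

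There is no real obstacle here: the only point that requires a moment's care is the pointwise identity for $\esslimsup$ and $\essliminf$ under a.e.-equality, which follows directly from the definition of the essential supremum on the balls $B_r(x)\setminus N$, where $N$ is a fixed null set containing $D\bigtriangleup D'$. Once that is recorded, the rest is bookkeeping: transfer the subsolution/supersolution conditions between $D$ and $D'$ and invoke Corollary~\ref{cor:Linfty-uniquness}. It is worth remarking that this theorem genuinely strengthens Corollary~\ref{cor:Linfty-uniquness}, since the assumption $\overline D=\overline{\inter D}$ is not required on $D$ itself but only on some measure-theoretic representative $D'$.
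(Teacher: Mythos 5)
Your proof is correct, but it takes a genuinely different route from the one the paper indicates. The paper proves this theorem by ``mimicking the proof of Theorem~\ref{thm:charac}'', i.e.\ by rerunning the sup-convolution and increasing-slope argument directly in the $L^\infty$-viscosity framework, which in effect re-derives the characterization $u=z$ on $\mathcal A\cap D'$ and $\Delta_\infty u=0$ elsewhere. You instead exploit the structural observation that Definition~\ref{def:linfty_viscosol.infty} involves $\chi_D$ only through $\esslimsup$ and $\essliminf$ over small balls, quantities that are insensitive to modification of the integrand on a null set; hence the $L^\infty$-viscosity sub- and supersolution conditions for $D$ and for $D'$ are \emph{identical}, the two problems have the same solution set, and uniqueness (and existence, via Lemma~\ref{lem:limit_is_linfty_visco}) is inherited from Corollary~\ref{cor:Linfty-uniquness} applied to $D'$. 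Your argument is shorter, more modular, and makes transparent exactly why the hypothesis is purely measure-theoretic; what it does not deliver, and what the paper's route would, is the explicit identification of the solution (as $z$ on $\mathcal A\cap D'$ and as the infinity-harmonic extension elsewhere), which is the extra payoff of redoing the Theorem~\ref{thm:charac} machinery. The one point that genuinely needs checking in your write-up --- that $g_1=g_2$ a.e.\ forces $\esslimsup_{y\to x}g_1(y)=\esslimsup_{y\to x}g_2(y)$ at \emph{every} $x$, and likewise for $\essliminf$ --- is exactly as you say an immediate consequence of the definition of the essential supremum over $B_r(x)$, so there is no gap.
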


Finally, we address the question of existence of $L^\infty$-viscosity solutions.
Recall from Lemma \ref{uniform_convergence} that if $f$ is Lipschitz, there
exists a subsequence of $(u_p)$, where $\Delta_p u_p=\chi_D$ in $\Om$
and $u=f$ on $\partial\Om$, and a function $u_\infty
\in W^{1,\infty}(\Omega)$ such that
$$
\lim_{p \to \infty} u_{p} (x) = u_\infty (x)
$$
uniformly in $\overline{\Omega}$. We already know  that $u_\infty$
is a viscosity solution to \eqref{bvp:grad_constraint}, and next
we show that it is also an $L^\infty$-viscosity solution to this
equation.

\begin{lemma}\label{lem:limit_is_linfty_visco}
A uniform limit $u_\infty$ of a subsequence $u_p$ as $p \to \infty$ is an
$L^\infty$-viscosity solution to \eqref{bvp:grad_constraint}.
\end{lemma}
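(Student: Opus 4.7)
The plan is to adapt the proof of Theorem~\ref{limit_is_visco} to the $L^\infty$-viscosity framework, the starting point being that, by Proposition~\ref{weak.implies.linfty.viscosity}, each $u_p$ is an $L^\infty$-viscosity solution of $\Delta_p u=\chi_D$. One then passes to the limit using $W^{2,\infty}_{loc}$ test functions and the $\essliminf/\esslimsup$ in place of pointwise values, working with the $p$-Laplace inequality on a positive-measure set instead of at a single point.

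For the supersolution half, let $\phi\in W^{2,\infty}_{loc}(\Omega)$ be such that $u_\infty-\phi$ has a strict local minimum at $x$. Uniform convergence of $u_p$ gives $x_p\to x$ with $u_p-\phi$ having a local minimum at $x_p$, and the $L^\infty$-supersolution property at $x_p$ yields, for every $\rho>0$ and every $\epsilon_p>0$, a positive-measure set
\[
F_p\subset\{y\in B_\rho(x_p):\Delta_p\phi(y)\le\chi_D(y)+\epsilon_p\}.
\]
If $\essliminf_{y\to x}(\abs{D\phi(y)}-\chi_D(y))\le 0$ the conclusion is immediate, so one may assume $\abs{D\phi}\ge\chi_D+\delta$ a.e.\ in some $B_{r_0}(x)$ and it suffices to prove $\essliminf_{y\to x}\Delta_\infty\phi(y)\le 0$. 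Dividing the inequality defining $F_p$ by $(p-2)\abs{D\phi}^{p-4}$ gives, a.e.\ on $F_p$,
\[
\Delta_\infty\phi(y)\le\frac{\chi_D(y)+\epsilon_p}{(p-2)\abs{D\phi(y)}^{p-4}}+\frac{\abs{D\phi(y)}^2\abs{\Delta\phi(y)}}{p-2}.
\]
On $F_p\cap D$ the bound $\abs{D\phi}\ge 1+\delta$ sends both terms to zero; on $F_p\setminus D$ one has $\chi_D=0$ and $\abs{D\phi}\ge\delta$, so choosing $\epsilon_p=\delta^p/p$ makes the first term equal to $\delta^4/(p(p-2))$ and the same conclusion holds. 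Hence $\Delta_\infty\phi\le o(1)$ on the positive-measure set $F_p\subset B_\rho(x)$, which is precisely $\essliminf_{y\to x}\Delta_\infty\phi\le 0$.

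For the subsolution half I would argue by contradiction. Fix $\varphi\in W^{2,\infty}_{loc}$ with $u_\infty-\varphi$ having a strict local maximum at $x$, obtain $x_p\to x$, and use the $L^\infty$-subsolution property to produce positive-measure sets $F_p\subset B_\rho(x_p)$ on which $\Delta_p\varphi\ge\chi_D-\epsilon_p$. If the desired $\esslimsup\ge 0$ failed, then for some $\epsilon,\rho>0$, a.e.\ $y\in B_\rho(x)$ would lie in $S_1=\{\Delta_\infty\varphi\le -\epsilon\}$ or in $S_2=D\cap\{\abs{D\varphi}\le 1-\epsilon\}$. On $F_p\cap S_1$ the inequality $\abs{\Delta_\infty\varphi}\le\abs{D\varphi}^2\norm{D^2\varphi}_{L^\infty}$ forces $\abs{D\varphi}\ge c_1:=(\epsilon/\norm{D^2\varphi}_{L^\infty})^{1/2}>0$; rearranging $\Delta_p\varphi\ge -\epsilon_p$ with $\epsilon_p=c_1^{p-2}$ then yields $\Delta\varphi\ge(p-2)\epsilon/\norm{D\varphi}_{L^\infty}^2-1\to\infty$, contradicting $\Delta\varphi\in L^\infty$. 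On $F_p\cap S_2$, both $(p-2)\abs{D\varphi}^{p-4}\abs{\Delta_\infty\varphi}$ and $\abs{D\varphi}^{p-2}\abs{\Delta\varphi}$ are dominated by $C(p-2)(1-\epsilon)^{p-2}\to 0$, contradicting $\Delta_p\varphi\ge 1-\epsilon_p$.

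The main obstacle is controlling the weight $\abs{D\phi}^{p-4}$ in the region where $\abs{D\phi}<1$, as it decays exponentially and the naive estimates used in Theorem~\ref{limit_is_visco} break down; the key device is the targeted choice of $\epsilon_p$ depending on the a.e.\ lower bound for $\abs{D\phi}$ on the relevant part of $F_p$, which keeps $\epsilon_p/\abs{D\phi}^{p-4}$ under control and lets the usual $p\to\infty$ asymptotics finish the argument.
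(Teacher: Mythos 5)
Your proposal is correct in substance and rests on the same computational core as the paper's proof: uniform convergence produces touching points $x_p\to x$, Proposition~\ref{weak.implies.linfty.viscosity} makes each $u_p$ an $L^\infty$-viscosity solution of $\Delta_p u=\chi_D$, and the decomposition $\Delta_p\varphi=\abs{D\varphi}^{p-2}\Delta\varphi+(p-2)\abs{D\varphi}^{p-4}\Delta_\infty\varphi$ together with the bounds $M_1=\norm{D\varphi}_{L^\infty}$, $M_2=\norm{D^2\varphi}_{L^\infty}$ drives the $p\to\infty$ asymptotics. The logical packaging differs, though. The paper runs both halves by contradiction from ``the relevant inequality fails by $\eps$ a.e.\ on a ball $B_r(x)$'' and shows that $\Delta_p\varphi-\chi_D$ then has a definite sign a.e.\ on that ball for large $p$; in particular it never needs a $p$-dependent tolerance $\epsilon_p$, because assuming $\Delta_\infty\phi\ge\eps$ \emph{and} $\abs{D\phi}-\chi_D\ge\eps$ simultaneously already controls the weight $\abs{D\phi}^{p-4}$. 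Your supersolution half is instead ``direct'': you extract positive-measure sets on which $\Delta_p\phi\le\chi_D+\epsilon_p$ and tune $\epsilon_p$ to the a.e.\ lower bound for $\abs{D\phi}$; this is a legitimate alternative and it works. Your subsolution half treats the negation of $\esslimsup\min\{\cdot,\cdot\}\ge 0$ as a genuine disjunction ($B_\rho(x)$ covered a.e.\ by $S_1\cup S_2$), which is in fact tidier than the paper's presentation: the paper proves the two statements $\esslimsup\Delta_\infty\varphi\ge 0$ and $\esslimsup(\abs{D\varphi}-\chi_D)\ge 0$ separately, and these do not formally imply that the $\esslimsup$ of the minimum is nonnegative, although the paper's two estimates combine exactly along your $S_1$/$S_2$ split to close that gap. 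Two small repairs on your side: with $c_1=(\epsilon/\norm{D^2\varphi}_{L^\infty})^{1/2}$ the choice $\epsilon_p=c_1^{p-2}$ need not tend to $0$ (if $c_1\ge 1$), so on $F_p\cap S_2$ the inequality $\Delta_p\varphi\ge 1-\epsilon_p$ would give no contradiction; take $\epsilon_p=\min\{c_1,\tfrac12\}^{p-2}$, which still satisfies $\epsilon_p\le\abs{D\varphi}^{p-2}$ on $S_1$. You should also note explicitly that for each large $p$ at least one of $F_p\cap S_1$, $F_p\cap S_2$ has positive measure, and pass to a subsequence along which the same alternative occurs before invoking the corresponding contradiction.
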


\begin{proof} That $u=f$ on $\partial \Omega$ is immediate from
the uniform convergence.

Now, let us first check that $u_\infty$ is an $L^\infty$-viscosity
subsolution. To this end, let us fix $\varphi\in
W^{2,\infty}_{loc}(\Om)$ such that $u-\varphi$ has a strict local
maximum at some $x\in\Om$. By the uniform convergence of
a subsequence $u_{p}$ to $u_{\infty}$ there are points $x_{p}$ such that
$u_{p} -\varphi$ has a minimum at $x_{p}$ and $x_{p} \to x $ as $p \to \infty$. At those points we have
\[
\esslimsup_{y\to x_{p}} \Big(\Delta_p\varphi(y)-\chi_D(y)\Big)\ge 0.
\]

We show first that
\begin{equation}\label{sub_part_one}
 \esslimsup\limits_{y\to  x}
\Delta_\infty\varphi(y)\ge 0.
\end{equation}
We argue by contradiction, and suppose
that there is $r>0$ and $\eps>0$ such that
\[
\Delta_\infty \varphi \le -\eps<0\quad\text{a.e.\ in $B_r( x)$}.
\]
Observe that this implies $\abs{D\varphi}>0$ a.e.\ in $B_r( x)$.
Denoting
$$M_1=\| {D\varphi}\|_{L^{\infty}(B_{2r}(x))} \qquad
\mbox{ and } \qquad M_2=\| {D^2\varphi} \|_{L^{\infty}(B_{2r}( x))},$$ we
have
\[
\begin{split}
\Delta\varphi+(p-2)\abs{D\varphi}^{-2}\Delta_\infty\varphi
\le nM_2 -(p-2)\frac\eps{M_1^2}
\end{split}
\]
a.e.\ in $B_r( x)$. In particular, for $p$ large enough this expression is negative, and hence we have that
\[
\begin{split}
\Delta_p\varphi=&\abs{D\varphi}^{p-2}\left(\Delta\varphi+(p-2)\abs{D\varphi}^{-2}\Delta_\infty\varphi\right)\\
\le& \left(\frac{\eps}{M_2}\right)^{(p-2)/2}
(nM_2 -(p-2)\frac\eps{M_1^2}) <0
\end{split}
\]
a.e.\ in $B_r( x)$. This contradicts the fact that
\[
\esslimsup_{y\to x_{p}} \Delta_p\varphi(y)\ge \esslimsup_{y\to x_{p}}
\Big(\Delta_p\varphi(y)-\chi_D(y)\Big)\ge 0,
\]
and thus \eqref{sub_part_one} must hold.

Next we show that
\begin{equation}\label{sub_part_two}
\esslimsup\limits_{y\to  x}
(\abs{D\varphi(y)}-\chi_D(y))\ge 0.
\end{equation}
We again argue by contradiction, and suppose that there is $r>0$ and $\eps>0$ such
that
\[
\abs{D\varphi}-\chi_D \le -\eps<0\quad\text{a.e.\ in $B_r( x)$}.
\]
Thus $\abs{B_r( x)\setminus D}=0$ and $\abs{D\varphi}\le 1-\eps$ a.e.\ in $B_r( x)$.
This implies that
\[
\begin{split}
\Delta_p\varphi-\chi_D=&\abs{D\varphi}^{p-2}\left(\Delta\varphi+(p-2)
\abs{D\varphi}^{-2}\Delta_\infty\varphi\right)-1\\
\le& (1-\eps)^{p-2}(n+p-2)M_2 -1,
\end{split}
\]
a.e.\ in $B_r( x)$. The last expression on the right is negative if
$p$ is large enough, and
we arrive to a contradiction by arguing as above. Hence \eqref{sub_part_two} is valid,
and together with \eqref{sub_part_one} this implies that $u_\infty$ is an $L^\infty$-viscosity
subsolution to \eqref{bvp:grad_constraint}.

To prove that $u_\infty$ is also an $L^\infty$-viscosity supersolution, we fix
$\phi\in W^{2,\infty}_{loc}(\Om)$ such that $u-\phi$ has a strict local
minimum at some $ x\in\Om$. Again set $M_1=\| {D\phi}\|_{L^{\infty}(B_{2r}(x))} $ and
$M_2=\| {D^2\phi} \|_{L^{\infty}(B_{2r}( x))}$.
We have to show that
\[
\essliminf\limits_{y\to  x} \Big(\min\{\Delta_\infty\phi(y),\, \abs{D\phi(y)}-\chi_D(y)\}\Big)\le 0.
\]

Suppose this is not the case. Then there are $r,\eps>0$ such that
\[
\Delta_\infty \phi (y)\ge \eps\quad\text{and}\quad \abs{D\phi(y)}-\chi_D(y)\ge \eps
\]
a.e.\ in $B_r( x)$. Then
\[
\Delta\phi+(p-2)\abs{D\phi}^{-2}\Delta_\infty\phi
\ge -nM_2+(p-2)\frac\eps{M_1^2}>0\quad\text{a.e.\ in $B_r( x)$}
\]
for $p$ large enough, and hence for such $p$'s,
\[
\begin{array}{l}
\displaystyle \Delta_p\phi-\chi_D= \abs{D\phi}^{p-2}\left(\Delta\phi
+(p-2)\abs{D\phi}^{-2}\Delta_\infty\phi\right)-\chi_D\\
\displaystyle \ge \left(\chi_D(y)+\eps\right)^{p-2}((p-2)\frac\eps{M_1^2}-nM_2)-\chi_D\\
\displaystyle
\ge \min\{\eps^{p-2}((p-2)\frac\eps{M_1^2}-nM_2),
\left(1+\eps\right)^{p-2}((p-2)\frac\eps{M_1^2}-nM_2)-1\}>0
\end{array}
\]
a.e.\ in $B_r( x)$. Recalling that by the uniform convergence of $u_{p}$ to $u_{\infty}$ there are
points $x_{p}$ such that $u_{p} -\phi$ has a minimum at
$x_{p}$ with $x_{p} \to  x $ as $p \to \infty$, and that $u_{p}$'s are
$L^\infty$-viscosity supersolutions to \eqref{main.eq.p}, we have a contradiction.
\end{proof}

\begin{section}{An application: asymptotic behavior for $p$-Laplace problems}
\label{sect-application}

Given functions $g\in L^\infty(\Omega)$ and
$f\colon\partial\Omega\to\R$ that is Lipschitz continuous, we
consider, for every $p>2$, the solution $u_p$ to the elliptic
problem
\begin{equation}
\label{eq:main}
\left\{
\begin{array}{ll}
 \Delta_p u =g \qquad &\text{ in }\Omega \\
  u=f \qquad &  \text{
on }\partial\Omega.
\end{array} \right.
\end{equation}
Our aim is to apply the results of the preceding sections to study the
limit as $p\to\infty$ of the functions $u_p$. In particular, we
want to see how this limit depends on the data $f$ and $g$.

The case $f=0$ was already considered in \cite{IL} (see also \cite{BBM}, \cite{jan}), where the
authors prove that there is a uniform limit that depends on $g$.
In particular, it is proved there that when $g$ does not change
sign and $f=0$ then the limit is unique and depends only on the support of $g$.

\subsection{The case $\lip(f,\partial\Om)\le 1$}
The solution to \eqref{eq:main} for a given $p$ admits a
variational characterization, namely, it is the unique minimizer
of the functional
$$
J_p(u)=\frac1p\int_\Omega|D u|^p \, dx+\int_\Omega gu \, dx
$$
in the set $ K_p=\{u\in W^{1,p}(\Omega): \, u=f \text{ on
}\partial\Omega\}$.

We proved in Lemma \ref{lem:lip_bound} that any subsequential limit
$u_\infty$ of $u_p$'s satisfies
$$
\|D u_\infty\|_{L^\infty (\Om)} \leq \max \{ \lip(f), 1\} =1,
$$
and from this it follows that $u_\infty$ minimizes the functional
$$
J_\infty (u)=\int_{\Omega}gu
$$
in the set $ K_\infty =\{u\in W^{1,\infty}(\Omega): \, \|
Du\|_{L^\infty (\Omega) } \leq 1 \, \mbox{ and }\, u=f
\text{ on }\partial\Omega\}$.
Indeed, since for any $v\in K_\infty$,
$$
\int_{\Omega} g u_p\, dx\le \frac1p\int_\Omega|D u_p|^p \, dx+\int_\Omega gu_p \, dx\le
\frac{|\Omega|}p+\int_\Omega gv \, dx,
$$
the claim follows from the uniform convergence $u_p \to u_{\infty }$.

In certain cases, the problem of minimizing $J_\infty$ has clearly
a unique solution. For example, if $g > 0$ in $\Omega$, then the unique
minimizer is given by
$$
u(x)=\max_{y\in\partial\Omega}\Big\{ f(y)-|x-y|\Big\}.
$$

However, if $\lip(f,\partial\Om)> 1$, then it is not so easy to
identify $u_\infty$ as a minimizer of some variational problem, and we
have to do something else.

\subsection{The general case}
Let $g$ be continuous and non-negative.
Then the non-degeneracy condition \eqref{non-degene} clearly holds, and
thus Lemma \ref{limit_is_visco} implies
that any subsequential limit $u_\infty$ of $u_p$'s is a viscosity
solution to
\begin{equation}\label{EQ.LIMITE}
\min\{\Delta_\infty u,|Du| - \chi_{D}\}=0,
\end{equation}
where $D=\{x\in\Om\colon g(x)>0\}$.
Since, by the continuity of $g$, we have
$\overline{\inter D}=\overline D$, Theorem \ref{thm:unique1} says that \eqref{EQ.LIMITE} has
a unique solution. Therefore, recalling Remark \ref{rem:better_uniqueness},
we have proved the following result:

\begin{theorem}\label{theo.f.g} Let $g\geq 0$ be continuous. Then,
the limit of the solutions $u_p$ as $p\to \infty$ is characterized
by being the unique solution to \eqref{EQ.LIMITE} with boundary
datum $f$. In particular, the limit depends on $g$ only through
the set
$$
\supp (g) \cap \{x\in\Om\colon |Dh(x)| <1\},
$$
where $h$ stands for the unique solution to the infinity Laplace equation with $h=f$ on $\partial\Om$.
\end{theorem}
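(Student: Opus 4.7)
My proof plan is essentially to assemble the pieces already in hand. Since $f$ is Lipschitz and $g\in L^\infty(\Omega)$, Lemma \ref{lemma.exis.p} provides, for each $p>\max\{2,n\}$, a unique weak solution $u_p\in W^{1,p}(\Omega)\cap C(\overline\Omega)$ to \eqref{eq:main}, which by Proposition \ref{weak.implies.viscosity} is also a viscosity solution of $\Delta_p u=g$. Lemma \ref{uniform_convergence} then yields a subsequence $u_{p_j}\to u_\infty$ uniformly on $\overline\Omega$, with $u_\infty\in W^{1,\infty}(\Omega)$ and $u_\infty=f$ on $\partial\Omega$.

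Next I would check that the non-degeneracy condition \eqref{non-degene} is satisfied. Because $g\ge 0$ is continuous, the set $D=\{g>0\}$ is open, so $D=\inter D$, and for $x\in\inter D$ we have $g_\ast(x)=g(x)>0$. Theorem \ref{limit_is_visco} then guarantees that $u_\infty$ is a viscosity solution of \eqref{EQ.LIMITE} with boundary data $f$. Continuity of $g$ also forces $\overline D=\overline{\inter D}$, so Theorem \ref{thm:unique1} applies and the solution to \eqref{EQ.LIMITE} is unique. Since every subsequence of $\{u_p\}$ admits a further subsequence converging to the same limit (the unique solution), the full family $\{u_p\}_{p}$ converges uniformly to this solution as $p\to\infty$.

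For the final assertion concerning the dependence on $g$, I would invoke Theorem \ref{thm:charac} together with Remark \ref{rem:better_uniqueness}. That theorem characterises the unique limit $u$ as the function which equals the Jensen solution $z$ on $\mathcal B=\mathcal A\cap D$, coincides with the infinity-harmonic extension $h$ on $\Omega\setminus\mathcal A$, and satisfies $\Delta_\infty u=0$ on $\Omega\setminus\overline{\mathcal B}$. Since $z$ and $h$ are determined by $f$ alone, the only way $g$ can enter is through the set $\overline{\mathcal B}=\overline{D\cap\mathcal A}$. A quick point-set-topology check, using that $D$ and $\mathcal A$ are open and $\supp g=\overline D$, gives $\overline{D\cap\mathcal A}\cap\mathcal A=\overline D\cap\mathcal A=\supp(g)\cap\{|Dh|<1\}$, while on $\Omega\setminus\mathcal A$ the solution is the common value $h=z$ regardless of $g$. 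Thus the limit depends on $g$ only via the set $\supp(g)\cap\{|Dh|<1\}$. The only mildly subtle step in the whole argument is this last set-theoretic identification; everything else is a direct and essentially mechanical application of the existence, stability and uniqueness theorems established earlier.
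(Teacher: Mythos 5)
Your proposal is correct and follows essentially the same route as the paper: verify the non-degeneracy condition \eqref{non-degene} from the continuity of $g$, apply Theorem \ref{limit_is_visco} to identify any subsequential limit as a solution of \eqref{EQ.LIMITE}, use $\overline{D}=\overline{\inter D}$ (since $D$ is open) with Theorem \ref{thm:unique1} to get uniqueness and hence full convergence, and read off the dependence on $g$ from the characterization in Theorem \ref{thm:charac}. Your explicit subsequence-of-a-subsequence step and the set-theoretic identification $\overline{D\cap\mathcal A}=\overline{\overline{D}\cap\mathcal A}$ are slightly more detailed than the paper's terse presentation, but the substance is identical.
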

\end{section}

\end{document}